\newtheorem{theorem}{Theorem}[section]
\newtheorem{proposition}[theorem]{Proposition}
\newtheorem{lemma}[theorem]{Lemma}
\newtheorem{corollary}[theorem]{Corollary}
\newtheorem{remark}[theorem]{Remark}
\theoremstyle{definition}%
\newtheorem{definition}[theorem]{Definition}
\def\pasdegrille{\let\grille = \pasgrille}%?????
\def\aat#1#2#3{
\divide \dimen1 by 48 \dimen3=\dimen1 \multiply \dimen1 by #1
\advance \dimen1 by -\dimen3 \divide \dimen1 by 101 \multiply
\dimen1 by 100 \divide \dimen2 by \count11 \multiply \dimen2 by #2
\setbox0=\hbox{#3}\ht0=0pt\dp0=0pt
  \rlap{\kern\dimen1 \vbox to0pt{\kern-\dimen2\box0\vss}}\dimen1= \wd1
\dimen2=\ht1}
\def\pasgrille{
\count12= \dimen1 \divide \count12 by 50 \divide \dimen2 by \count12
\count11 =\dimen2 \ \divide \dimen1 by 48
\setlength{\unitlength}{\dimen1} \smash{\rlap{\ }} \dimen1= \wd1
\dimen2=\ht1 }
\def\grille{
\count12= \dimen1 \divide \count12 by 50 \divide \dimen2 by \count12
\count11 =\dimen2 \ \divide \dimen1 by 48
\setlength{\unitlength}{\dimen1}
\smash{\rlap{\graphpaper[1](0,0)(50, \count11)}} \dimen1= \wd1
\dimen2=\ht1 }
\numberwithin{equation}{section}
\newcommand{\R}{\mathbb{R}}
\newcommand{\N}{\mathbb N}
\newcommand{\beq}{\begin{equation}}
\newcommand{\eeq}{\end{equation}}
\newcommand{\ben}{\begin{eqnarray}}
\newcommand{\een}{\end{eqnarray}}
\newcommand{\beno}{\begin{eqnarray*}}
\newcommand{\eeno}{\end{eqnarray*}}
\newcommand{\indic}{1\!\!1}
\newcommand{\HHH}{\mathcal{H}}
\def\eps{\varepsilon}
\newcommand{\SF}{\mathcal H}
\title[]{Scattering to a stationary solution for the superquintic radial wave equation outside an obstacle}
\author{Thomas Duyckaerts${}^1$}
\address{Thomas Duyckaerts, LAGA, Institut Galil\'ee, Universit\'e Paris 13
99, avenue Jean-Baptiste Cl\'ement,n
93430 - Villetaneuse, France}
\email{duyckaer@math.univ-paris13.fr}
\author{Jianwei Yang${}^2$}
\address{Jianwei Yang, Department of Mathematics, Beijing Institute of Technology, Beijing 100081, P. R. China}
\email{jw-urbain.yang@bit.edu.cn}
\thanks{$^1$LAGA (UMR 7539), Universit\'e Paris 13, Sorbonne Paris Cit\'e, and Institut Universitaire de France}
\thanks{$^2$Department of Mathematics, Beijing Institute of Technology and LAGA (UMR 7539), Universit\'e Paris 13. Partially supported by the Labex MME-DII}
\date{\today}
\begin{document}

\subjclass{35L71, 35B40, 35L20}

\begin{abstract}
We consider the focusing wave equation outside a ball of $\R^3$, with Dirichlet boundary condition and a superquintic power nonlinearity. We classify all radial stationary solutions, and prove that all radial global solutions are asymptotically the sum of a stationary solution and a radiation term.
\end{abstract}

\maketitle

\section{Introduction}

Let $K$ be a compact subset of $\R^3$ and $\Omega=\R^3\setminus K$. Consider a wave equation on $\Omega$ with Dirichlet boundary condition
\begin{equation}
\label{eq:gW}
\begin{cases}
(\partial_{t}^{2}-\Delta)u(t,x)=F(u),\,(t,x)\in\mathbb R\times \mathbb R^{3}\\
(u,\partial_{t}u)|_{t=0}=(u_{0}, u_{1}),
\quad u_{\restriction\partial \Omega}=0,
\end{cases}
\end{equation}
where the initial data $(u_0,u_1)$ is assumed to be in a Sobolev space, and in particular to have some decay at infinity. We will mainly be interested in a focusing supercritical nonlinearity $F(u)= |u|^{2m}u$, where $m>2$ is an integer, outside the unit ball of $\R^3$. We first review known results in more general cases.

The global dynamics of the linear wave equation ($F(u)=0$) is quite well understood, and depends on the geometry of the obstacle:
\begin{itemize}
\item When $K$ is non-trapping, for example convex, the global-in-time dispersive properties of the wave equation on the whole space $\R^3$ still hold. The local energy of smooth, compactly supported solutions decay exponentially (see \cite{MorawetzRalston77}. Strichartz estimates are available (see e.g. \cite{SmithSogge00}).
\item When $K$ is a trapping obstacle, some of the preceding properties persist, but it might be in weaker forms that depend on the geometry. In some weakly trapped geometries, the same Strichartz estimates as in $\R^3$ hold, as proved in \cite{Lafontaine18P}. In full generality,  the decay of the energy is only logarithmic (see \cite{Burq98}) and Strichartz estimates might hold only locally. 
\end{itemize} 
The defocusing equation $F(u)=-|u|^{2m}u$ was mainly considered in the energy-critical situation $m=2$ with a non-trapping obstacle. Once Strichartz estimates are known, the proof of global well-posedness can be easily adapted to this case (see \cite{SmithSogge95}). Under geometric assumptions that imply in particular that the obstacle is non-trapping, and are satisfied when $K$ is convex, it is proved in \cite{AbouShakra2013} that all solutions scatter to a solution of the linear wave equation (see also \cite{DuyckaertsLafontaine19P} for Neumann boundary conditions in a radial setting). This property persists in the super-critical case $m>2$ outside the unit ball, for radial solutions (see \cite{DAncona19P} and the Remark \ref{R:defocusing} below).

We are not aware of any work on focusing nonlinearity $F(u)=|u|^{2m}u$, except the recent preprint of P.~Bizo\'n and M.~Maliborski \cite{BizonMaliborski19P}. As in the case without obstacle, it is easy to construct, for any $m>0$,  solutions blowing up in finite time, using blow-up solutions of the ODE $y''=|y|^{2m}y$ and finite speed of propagation. 

We are interested in the behaviour of global solutions. The energy-critical case with $m=2$ on the whole space $\R^3$ was treated in a series of work initiated in \cite{KeMe08}. The equation has an explicit stationary solution $W(x)=(1+|x|^2/3)^{-1/2}$, which is unique up to scaling and change of sign. In \cite{DuKeMe13}, it is proved that any radial global solution of the equation is asymptotically 
the sum of decoupled rescaled stationary solutions and a solution of the free (linear) wave equation. When $m>1$ is not $2$, the global dynamics is 
different. There is no nonzero stationary solutions, and (assuming decay of the initial data) all known global solutions scatter to linear solutions. In particular, a solution whose Sobolev critical norm does not go to infinity scatters to a linear solution (see  \cite{DuKeMe12c}, \cite{Shen14}, \cite{DuyckaertsRoy17} and \cite{DuyckaertsYang18}, all concerning radial solutions). We note that the decay assumption of the initial data is necessary, as shows an example of J.~Krieger and W.~Schlag \cite{KriegerSchlag17}.

The purpose of this paper is to illustrate the fact that the obstacle might drastically change the dynamics of the focusing equation, even when the
dynamics of the linear and defocusing equations are essentially not modified by the presence of the obstacle. 

More precisely, we let $\mathbb{B}\subset \mathbb R^{3}$ be the unit ball centered at the origin, set $\Omega=\mathbb R^{3}\setminus \mathbb{B}$ and consider radial solutions of the equation \eqref{eq:gW} with $F(u)=|u|^{2m}u$, $m>2$.
\begin{equation}
\label{eq:NLW}
\begin{cases}
(\partial_{t}^{2}-\Delta)u(t,x)=|u|^{2m}u,\,(t,x)\in\mathbb R\times \Omega\\
(u,\partial_{t}u)|_{t=0}=(u_{0}, u_{1})\in \mathcal{H},\quad
u_{\restriction\partial \Omega}=0,
\end{cases}
\end{equation}
where $\HHH$ is the space of radial functions in $\dot{H}^1_0(\Omega) \times L^2(\Omega).$
One can prove that \eqref{eq:NLW} is locally well-posed in $\HHH$. 
The energy 
\begin{equation}
\label{eq:energy}
E(\vec{u}(t))=\frac12\int_{\Omega}\bigl|\nabla u(t,x)\bigr|^{2}dx
+\frac12\int_{\Omega}\bigl|\partial_{t}u(t,x)\bigr|^{2}dx
-\frac{1}{2(m+1)}\int_{\Omega}\bigl| u(t,x)\bigr|^{2(m+1)}dx\
\end{equation}
is conserved by the flow.
As mentioned before, the equation admits solutions blowing-up in finite time. 
More interestingly, there are also stationary solutions:
\begin{proposition}
\label{P:stationary}
 Assume $m>2$ is an integer. For any integer $k\geq 0$, there exists a unique radial stationary solution $Q_k\in C^{\infty}(\overline{\Omega})$ of \eqref{eq:NLW} such that $Q_k(x)=0$ for $x\in \partial \Omega$, and $r\mapsto Q_k(r)$ has exactly $k$ zeros on $(1,\infty)$, and is positive for large $r$. More precisely, there exists $c_k>0$ such that 
 $$\left|Q_k(r)-\frac{c_k}{r}\right|\lesssim \frac{1}{r^2},\quad \lim_{r\to\infty}r^2Q_k(r)=-c_k.$$
 Moreover, the sequence $(E(Q_k,0))_{k\in\mathbb{N}}$ is increasing.
 %The Morse index\footnote{That is the dimension of the subspace of $L^2$ spanned by the eigenfunctions with negative eigenvalue of the linearized operator $-\Delta -(2m+1)|Q_k|^{2m}$.} of $Q_k$ is $k+1$. 
 Finally the set of stationary solutions of \eqref{eq:NLW} is exactly 
 $$\left\{Q_k,\; k\in \N\right\}\cup \left\{-Q_k,\; k\in \N\right\}\cup \{0\}.$$
\end{proposition}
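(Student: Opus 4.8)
The plan is to reduce the PDE problem to an ODE shooting analysis. A radial stationary solution $Q(r)$ of \eqref{eq:NLW} satisfies, on $(1,\infty)$, the ordinary differential equation $-Q''-\tfrac{2}{r}Q'=|Q|^{2m}Q$ with boundary condition $Q(1)=0$, and must lie in $\dot H^1$, which forces the decay $Q(r)\to 0$ as $r\to\infty$. Substituting $v(r)=rQ(r)$ turns this into $-v''=r^{-2m}|v|^{2m}v$ on $(1,\infty)$ with $v(1)=0$; finiteness of the $\dot H^1$ norm together with the equation should give that $v(r)$ has a finite limit $c$ at infinity, and the asymptotic expansion $Q(r)=c/r+O(1/r^2)$, $r^2Q(r)\to -c$ follows by integrating the equation for $v''$ twice against the integrable weight $r^{-2m}$ (using $2m>4$). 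So the content is: classify the solutions of this ODE with $v(1)=0$ and $v$ bounded, count their zeros, and order their energies.

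First I would set up the shooting parameter: solutions with $v(1)=0$ are determined up to scaling by the sign and size of $v'(1)$, but the nonlinearity is not scale-invariant here (the $r^{-2m}$ weight breaks the usual scaling), so I cannot normalize $v'(1)$ away. Instead I would study the map $a\mapsto v_a$ where $v_a$ solves the ODE with $v_a(1)=0$, $v_a'(1)=a>0$. The key qualitative facts to establish, by ODE phase-plane / energy-functional arguments on $(1,\infty)$: (i) every solution exists globally on $(1,\infty)$ (the weight $r^{-2m}$ decays fast enough to prevent ODE blow-up at finite $r>1$, via a Gronwall/energy estimate); (ii) for small $a>0$, $v_a$ stays positive and converges to a positive limit — it behaves like the linear solution $v=a(r-1)$ perturbed by a small, integrable forcing, so $v_a$ is monotone-ish and has no zeros; (iii) for large $a>0$, the solution oscillates and picks up many zeros on $(1,\infty)$ before the weight $r^{-2m}$ kills the nonlinearity, after which $v_a$ is asymptotically linear, $v_a(r)\sim \alpha(a) r + \beta(a)$; the number of zeros $N(a)$ is finite for each $a$ and $N(a)\to\infty$ as $a\to\infty$. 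Then I would show $N(a)$ takes every value in $\N$: the zero count changes only by passing through configurations where $v_a$ has a zero "at infinity" (i.e. $\alpha(a)=0$, $v_a\to 0^-$ from below or a limit with the right sign), and these are exactly the admissible stationary solutions. A continuity/intermediate-value argument in $a$, controlling how zeros enter only from $r=+\infty$ as $a$ increases (monotone dependence of the solution and of its zeros on $a$, via the variational equation $\partial_a v_a$ which solves a linear ODE and does not vanish where $v_a$ does), yields for each $k$ a unique $a_k$ with the property that $v_{a_k}$ has exactly $k$ zeros in $(1,\infty)$ and tends to a positive limit; set $Q_k(r)=v_{a_k}(r)/r$. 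Uniqueness of $Q_k$ comes from the strict monotonicity of the zero-count (and of the limiting behavior) in $a$: two solutions with the same number of interior zeros and the same positivity-at-infinity must have the same $a$. The sign-change solutions $-Q_k$ correspond to $a<0$ by oddness of the nonlinearity, and $0$ is the only solution with $a=0$, so the full list is as claimed.

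For the monotonicity of $k\mapsto E(Q_k,0)$ I would compute the energy directly in the $v$ variable. Writing $P(r)=\int_\Omega$-integrals in terms of $v$, one gets $E(Q_k,0)=\tfrac12\int_1^\infty (v_k')^2\,dr - \tfrac{1}{2(m+1)}\int_1^\infty r^{-2m}|v_k|^{2(m+1)}\,dr$ (up to the harmless $4\pi$ from the angular integration), and using the Pohozaev-type identity obtained by multiplying the ODE by $v$ and by $rv'$ and integrating, one can re-express $E(Q_k,0)$ as a manifestly positive quantity, e.g. a positive multiple of $\int_1^\infty r^{-2m}|v_k|^{2(m+1)}\,dr$ plus a boundary term, or equivalently in terms of $\alpha(a_k)$ and $a_k$. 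I expect that along the family $v_{a_k}$ the relevant integral quantity is strictly increasing in $a$ because the solution is "larger" in an appropriate monotone sense as $a$ grows (each new oscillation adds a positive contribution); making this precise — rather than merely plausible — is where I would be most careful, perhaps by differentiating the energy along a path $a\mapsto v_a$ interpolating between consecutive $a_k$'s and showing the derivative has a definite sign using the variational equation.

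The main obstacle, I expect, is step (iii): controlling the oscillatory regime for large $a$ precisely enough to prove that the zero-count $N(a)$ hits \emph{every} natural number and that the "escape of a zero to $+\infty$" is the only mechanism by which $N(a)$ changes — in other words, ruling out a jump of $N(a)$ by more than one as $a$ crosses a critical value, and ruling out the creation of a zero at a finite radius. This requires the monotone dependence of $v_a$ and its zeros on $a$ (sign-definiteness of $\partial_a v_a$ at the zeros of $v_a$, which itself needs a Sturm-type comparison for the linear variational equation), together with a uniform statement that once $r$ is large enough the $r^{-2m}$ weight makes the equation essentially linear and so no further zeros appear beyond a controlled radius depending continuously on $a$. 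Handling the transition region and the limit $a\to\infty$ carefully is the crux; the rest (global existence of the ODE, the asymptotic expansion of $Q_k$, existence of $c_k>0$, and the structure of the solution set) is comparatively routine.
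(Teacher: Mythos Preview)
Your shooting-from-$r=1$ approach is a plausible route, but the paper takes a much cleaner one that you may have overlooked because of a slight misconception: you write that ``the nonlinearity is not scale-invariant here,'' but in fact the equation $-\Delta Q=|Q|^{2m}Q$ on $\mathbb{R}^3\setminus\{0\}$ \emph{is} invariant under $Q(r)\mapsto \lambda^{1/m}Q(\lambda r)$; only the boundary condition at $r=1$ breaks it. The paper exploits this by shooting from infinity rather than from $r=1$: there is (up to scaling and sign) a single radial solution $Z_1$ on $(0,\infty)$ with $rZ_1(r)\to 1$ at infinity, and a short uniqueness argument (integrating the equation twice from infinity) shows every $\dot H^1$ stationary solution on $\Omega$ is $\pm\alpha^{1/m}Z_1(\alpha r)$ for some $\alpha>0$. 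The whole classification then reduces to showing $Z_1$ has infinitely many zeros $r_0>r_1>\cdots\to 0$, which the paper obtains from Fowler's classical trichotomy for the Emden--Fowler equation (via the substitution $h(s)=Z_1(1/s)$); one then sets $Q_k(r)=r_k^{1/m}Z_1(r_k r)$, and both uniqueness and the asymptotics are immediate. The energy monotonicity, which you leave at the level of ``I expect \ldots perhaps by differentiating,'' becomes a one-line computation in the paper: $E(Q_k)=\tfrac{m}{2(m+1)}\,r_k^{-(m-2)/m}\int_{r_k}^\infty |Z_1|^{2(m+1)}r^2\,dr$, manifestly increasing in $k$ since $r_k$ decreases. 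Your approach would require making rigorous the monotone dependence of the zero count $N(a)$ on the shooting parameter --- the step you correctly flag as the crux --- and this nonlinear Sturm-type statement is genuinely delicate, whereas the paper's scaling reduction sidesteps it entirely.
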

Our main result is that the stationary solutions $Q_k$ are the only obstruction to linear scattering for global solutions.  Consider the linear wave equation outside $\Omega$:
\begin{equation}
\label{eq:LW}
\begin{cases}
(\partial_{t}^{2}-\Delta)u(t,x)=0,\,(t,x)\in\mathbb R\times \Omega\\
(u,\partial_{t}u)|_{t=0}=(u_{0}, u_{1})\in \HHH,\quad 
u_{\restriction \partial \Omega}=0.
\end{cases}
\end{equation}
\begin{theorem}
\label{T:classification}
Let $u$ be a solution of \eqref{eq:NLW} on $[0,\infty)\times \Omega$. Then there exists a solution $v_L$ of the linear wave equation \eqref{eq:LW}, and a stationary solution $Q$ of \eqref{eq:NLW} such that
 $$\lim_{t\to\infty}\left\|\vec{u}(t)-\vec{v}_L(t)-(Q,0)\right\|_{\dot{H}^1(\Omega)\times L^2(\Omega)}=0.$$
 The same statement holds true for $t\to -\infty$.
\end{theorem}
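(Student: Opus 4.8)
The plan is to follow the channels-of-energy / soliton-resolution strategy of Duyckaerts--Kenig--Merle, adapted to the exterior domain and to the supercritical (non-scaling-invariant) nonlinearity. The first step is to establish a priori control on the trajectory: one shows that a global radial solution of \eqref{eq:NLW} cannot have a norm $\|\vec u(t)\|_{\HHH}$ that blows up along a sequence of times. The conserved energy \eqref{eq:energy} controls $\|\nabla u\|_{L^2}^2+\|\partial_t u\|_{L^2}^2$ minus a multiple of $\|u\|_{L^{2(m+1)}}^{2(m+1)}$; combined with a virial (Morawetz-type) identity on $\Omega$ — using that the obstacle is the unit ball and that the exterior is star-shaped, so the boundary term has a favorable sign — one rules out the self-similar/blow-up-rate scenario and deduces that $\sup_{t\ge 0}\|\vec u(t)\|_{\HHH}<\infty$, and moreover that $\|u(t)\|_{L^{2(m+1)}}\to 0$ as $t\to\infty$ (on average, then genuinely). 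This is the replacement for the ``critical element / non-vanishing critical norm'' dichotomy in the scaling-invariant case; here the nonlinearity is energy-subcritical in the scaling sense but supercritical in the conformal sense, so the relevant smallness is an $L^{2(m+1)}$-type spacetime bound.

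The second step is the extraction of the stationary solution. Because there is no scaling, the would-be ``profiles'' are not rescaled copies of a single bubble but must be genuine stationary solutions on $\Omega$; by Proposition \ref{P:stationary} these form the discrete family $\{\pm Q_k\}\cup\{0\}$, ordered by energy. The idea is to use the exterior channel-of-energy estimate: for the free wave equation \eqref{eq:LW} outside the ball, a nonzero finite-energy radial datum radiates a fixed fraction of its energy into the region $\{|x|>t+R\}$ (this is the exterior-domain analogue, provable via the explicit representation using the odd reflection across $r=1$, since in $3$D radial free waves are transported along characteristics). One then argues that the solution $u$, restricted to the exterior light cone, must asymptotically decouple into a free radiation part $v_L$ plus a term that carries no channel energy; by the channel estimate and the classification in Proposition \ref{P:stationary}, that remaining term must converge in $\HHH$ to a stationary solution $(Q,0)$. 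Concretely, one sets $v_L$ to be the free evolution matching the behavior of $u$ for $|x|>t$ (large $r$), shows $\vec u(t)-\vec v_L(t)$ converges weakly to some $(Q,0)$ with $Q$ solving the elliptic equation, and upgrades weak to strong convergence using the energy identity $E(\vec u)=E(\vec v_L)+E(Q,0)+o(1)$ together with the vanishing of $\|u(t)\|_{L^{2(m+1)}}$ off a compact set.

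The third step is to rule out a nontrivial remainder between two consecutive stationary energies, i.e.\ to show the ``$o(1)$'' error genuinely goes to zero rather than hovering near a stationary solution that is itself moving. Here one invokes the ordering of the energies $(E(Q_k,0))_k$ from Proposition \ref{P:stationary} and a rigidity argument: if $\vec u(t)-\vec v_L(t)$ did not converge, a subsequence would produce a global solution with compactly supported (modulo radiation) trajectory and energy strictly between two levels, and the Morawetz identity on the star-shaped exterior would force it to be stationary — contradicting minimality unless it is exactly some $Q_k$. The statement for $t\to-\infty$ is identical by time reversal.

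\medskip

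\noindent\textbf{Main obstacle.} The crux is the exterior channel-of-energy estimate and the accompanying rigidity: on $\R^3$ without obstacle the channel estimate for radial free waves is sharp and well-known, but outside the ball one must control the reflected wave and verify that the reflection does not destroy the lower bound on the exterior energy flux; equally delicate is passing from the weak limit (which easily identifies \emph{some} stationary solution) to a strong limit and excluding a persistent oscillation between the discrete energy levels — this is exactly where the supercriticality bites, since one lacks a scaling-invariant Strichartz framework and must instead lean on the conserved energy, the $L^{2(m+1)}$ decay, and the Morawetz monotonicity specific to the star-shaped exterior.
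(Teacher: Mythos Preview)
Your proposal has the right high-level architecture (radiation term, rigidity, energy quantization), but several of the concrete mechanisms you invoke are either wrong or too vague to carry the argument, and they diverge from what the paper actually does.

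\textbf{The $L^{2(m+1)}$ decay claim is false.} You assert that $\|u(t)\|_{L^{2(m+1)}}\to 0$ and then use this to upgrade weak to strong convergence. But this contradicts the very conclusion of the theorem: if $\vec u(t)\to \vec v_L(t)+(Q,0)$ with $Q=\pm Q_k\neq 0$, then $\|u(t)\|_{L^{2(m+1)}}\to \|Q_k\|_{L^{2(m+1)}}>0$ (since the linear part decays in $L^{2(m+1)}$). So this step cannot work as written.

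\textbf{Boundedness is only along a subsequence.} The paper does not prove $\sup_{t\ge 0}\|\vec u(t)\|_{\HHH}<\infty$ a priori. Lemma~\ref{L:bnd} uses a Levine-type convexity argument (not a Morawetz inequality) to show only $\liminf_{t\to\infty}\|\vec u(t)\|_{\HHH}<\infty$. A Morawetz identity on the star-shaped exterior does not obviously give uniform boundedness for a focusing supercritical problem, and you have not indicated how it would. The entire resolution argument in the paper is then run along a bounded sequence $t_n\to\infty$, with full-time convergence recovered only at the very end (Step~2 of \S\ref{SS:resolution}) via energy quantization.

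\textbf{The rigidity mechanism is different and essential.} You propose that Morawetz forces a compact-trajectory solution to be stationary. The paper's rigidity (Proposition~\ref{P:rigidity}) is of a different nature and is the genuine heart of the proof: a solution on $\{|x|>\rho_0+|t|\}$ with \emph{vanishing exterior channels of energy} in both time directions must equal some $(Z_\ell,0)$. The proof is not by Morawetz but by iterating the linear exterior-energy lower bound (Lemma~\ref{L:exterior_energy}) against small-data nonlinear estimates to pin down the asymptotics $ru_0(r)\to\ell$, and then a uniqueness argument for the ODE. This rigidity is applied, via the profile decomposition of Proposition~\ref{P:profile} and the approximation Proposition~\ref{P:approx_profile}, to the first profile $U^1$ of $\vec u(t_n)-\vec v_L(t_n)$: one shows $U^1$ has no channels (otherwise $u$ itself would radiate outside the cone, contradicting the definition of $v_L$ or finite energy at $t=0$), hence $U^1=(Q,0)$; then the same channel argument kills all other profiles and the remainder $w_n$. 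Your sketch never identifies this no-channel criterion, nor the profile decomposition that makes it applicable; ``weak limit solves the elliptic equation'' is not automatic and is exactly what Proposition~\ref{P:rigidity} provides.

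In short: replace the Morawetz/\,$L^{2(m+1)}$-decay storyline by (i) $\liminf$ boundedness via Levine, (ii) extraction of $v_L$ from finite exterior Strichartz norm, (iii) profile decomposition at bounded times $t_n$, (iv) channels-of-energy rigidity to force the sole nonlinear profile to be stationary and all linear profiles and remainder to vanish, and (v) energy-level discreteness to pass from subsequence to full-time convergence.
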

According to Proposition \ref{P:stationary}, $Q$ must be $0$ (and in this case the solutions scatters to a linear solution) or one of the nonzero stationary solutions $\pm Q_k$. The set of initial data leading to scattering is open in $\HHH$. We conjecture that the set of data leading to  blow-up is open, and that the set of solutions converging locally to $\pm Q_k$ is a closed submanifold of $\HHH$, of codimension $k+1$ in $\HHH$. We will study this conjecture in a forthcoming paper. See \cite{BizonMaliborski19P} for numerical and analytical evidences toward this conjecture in the case $k=0$.

Note that Theorem \ref{T:classification} implies that for any $R>1$,
\begin{equation}
 \label{local_CV}
 \lim_{t\to\infty} \int_{1\leq |x|\leq R} \left|\nabla (u(t,x)-Q(x))\right|^2+(\partial_tu(t,x))^2\,dx=0.
 %\Big\|\chi\big| (\nabla (u(t)-Q),\partial_tu) \big|\Big\|_{L^2}=0.
\end{equation} 
An interesting question is the exact rate of this convergence when $Q=\pm Q_k$. This problem is discussed in \cite{BizonMaliborski19P} using both theoretical and numerical methods, in the case $k=0$.
Our method, based on a contradiction argument, does not give any quantitative information of this type. 
\begin{remark}
 \label{R:defocusing}
The proof of Theorem \ref{T:classification} can be adapted to prove that all solutions of the corresponding defocusing wave equation scatter to a linear solution (see  Remarks \ref{R:stationary_defocusing}, \ref{R:rigidity_defocusing} and \ref{R:proof_defocusing}). See also \cite{DAncona19P}, where a similar result is proved and used to treat nonradial perturbations of a radial solution.
 \end{remark}

The proof of Theorem \ref{T:classification} relies on the ``channels of energy'' method, which was introduced in \cite{DuKeMe11a}, and was used  in \cite{DuKeMe13} to prove the analog of Theorem \ref{T:classification} for the radial energy-critical wave equation in space dimension 3.  The proof for equation \eqref{eq:NLW} is somehow simpler, since equation \eqref{eq:NLW} does not admit any scaling invariance.
The core of the proof is the rigidity result (Proposition \ref{P:rigidity}) that states that any radial solution of  \eqref{eq:NLW} such that 
$$ \sum_{\pm}\lim_{t\to\pm\infty}\int_{|x|>|t|}|\nabla_{t,x}u(t,x)|^2\,dx=0$$
is stationary. This also implies the following one-pass theorem:
\begin{theorem}
 \label{T:one-pass}
Let $\eps>0$ be small and $k\in \N$. There exists $\delta>0$ with the following property. For all radial solution $u$ of \eqref{eq:NLW} such that there exists $t_0<t_1$ with $[t_0,t_1]\subset I_{\max}(u)$ and
$$ \left\|\vec{u}(t_0)-(Q_k,0)\right\|_{\HHH}\leq \delta,\quad \left\|\vec{u}(t_1)-(Q_k,0)\right\|_{\HHH}\geq \eps$$
one has
$$\forall t\in [t_1,+\infty)\cap I_{\max}(u),\; \forall Q\in \{0\}\cup \bigcup_{j\in \N}\{\pm Q_j\},\quad \left\|\vec{u}(t)-(Q,0)\right\|_{\HHH}> \delta.$$
\end{theorem}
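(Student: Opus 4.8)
The plan is to deduce Theorem \ref{T:one-pass} from the rigidity statement (Proposition \ref{P:rigidity}) by a standard compactness-and-contradiction argument, exactly in the spirit of the ``one-pass'' theorems in the channels-of-energy literature. Suppose the conclusion fails for some fixed $\eps>0$ and $k\in\N$. Then for each $n$ there is a radial solution $u_n$ of \eqref{eq:NLW} and times $t_{0,n}<t_{1,n}<t_{2,n}$ and a stationary solution $Q^{(n)}\in\{0\}\cup\bigcup_j\{\pm Q_j\}$ with $\|\vec{u}_n(t_{0,n})-(Q_k,0)\|_{\HHH}\le 1/n$, $\|\vec{u}_n(t_{1,n})-(Q_k,0)\|_{\HHH}\ge\eps$, and $\|\vec{u}_n(t_{2,n})-(Q^{(n)},0)\|_{\HHH}\le 1/n$. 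By the small-data theory near $(Q_k,0)$ the data at $t_{0,n}$ generate a solution that stays in a small neighborhood of $(Q_k,0)$ for a time that tends to $\infty$; combined with finiteness of the energy (which is conserved and, near $t_{0,n}$, close to $E(Q_k,0)$), one gets uniform energy bounds on the whole family. Translating time so that, say, $t_{1,n}=0$, one then wants to extract a limiting object.

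The next step is to pass to a limit. The uniform energy bound gives, after extraction, weak convergence of $(\vec{u}_n(0))_n$ in $\HHH$ to some $(v_0,v_1)$, and by the perturbation/continuity theory for \eqref{eq:NLW} the corresponding solutions $u_n$ converge (locally uniformly in time, strongly in $\HHH$ on compact time intervals, after possibly a further profile-type analysis) to a solution $u_\infty$. One must check that $u_\infty$ is nonstationary: this follows because $\|\vec{u}_n(0)-(Q_k,0)\|_{\HHH}\ge\eps$ forces the limit to be at distance $\ge\eps$ (or at least bounded below) from $(Q_k,0)$ at time $0$, while at the ``early'' times near $t_{0,n}$ and the ``late'' times near $t_{2,n}$ the solution is trapped near stationary solutions. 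Using the classification of stationary solutions in Proposition \ref{P:stationary} — in particular that distinct stationary solutions are separated in $\HHH$ and that $(E(Q_k,0))_k$ is strictly increasing, hence energy pins down which $Q_j$ can be approached — together with conservation of energy, one argues that $u_\infty$ must converge locally to the \emph{same} stationary solution $(Q_k,0)$ as $t\to-\infty$ and as $t\to+\infty$ (the competitor $Q^{(n)}$ can only be $\pm Q_k$ for large $n$, and a sign/continuity argument rules out $-Q_k$).

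The heart of the argument is then to upgrade ``converges locally to $(Q_k,0)$ at both ends'' to the exterior-energy smallness hypothesis of Proposition \ref{P:rigidity}, namely
\[
\sum_{\pm}\lim_{t\to\pm\infty}\int_{|x|>|t|}|\nabla_{t,x}u_\infty(t,x)|^2\,dx=0.
\]
This is where one invokes the exterior energy estimates for the free radial wave equation in $\R^3$ outside a ball (finite speed of propagation localizes everything to $|x|>|t|-C$, where the obstacle is invisible): since $u_\infty$ differs from the fixed profile $Q_k$ only in the bounded region and by a term going to $0$ locally, and since $u_\infty-Q_k$ behaves like a free wave in the exterior region, its exterior energy must vanish in the limit, for otherwise it would carry a nonzero amount of energy out to infinity in a ``channel'', contradicting that $\vec{u}_\infty(t)\to(Q_k,0)$ locally in a strong sense along a subsequence and that the total energy equals $E(Q_k,0)$. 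Granting this, Proposition \ref{P:rigidity} forces $u_\infty$ to be stationary, hence (being close to $(Q_k,0)$) equal to $Q_k$, contradicting $\|\vec{u}_\infty(0)-(Q_k,0)\|_{\HHH}\ge\eps$. The main obstacle, and the step requiring the most care, is precisely this passage to the limit together with the verification of the exterior-energy hypothesis: one must control the family $u_n$ uniformly on the possibly very long intervals $[t_{0,n},t_{2,n}]$, handle the lack of compactness via the channels-of-energy / profile decomposition machinery, and make sure that no energy escapes in the limit so that the limiting solution genuinely satisfies the rigidity hypothesis rather than merely converging locally.
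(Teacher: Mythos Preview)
Your overall strategy---contradiction, normalisation at the ``escape time'', profile decomposition, and an appeal to Proposition~\ref{P:rigidity}---is the right one, and matches the paper's. But you invert the use of the rigidity proposition, and this creates real gaps.

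The paper does \emph{not} try to build a single global limit $u_\infty$ and verify that its exterior energy vanishes. Instead, after translating so that $t_{1,n}=0$ and $\|\vec u_n(0)-(Q_k,0)\|_{\HHH}=\eps$, it takes a profile decomposition of $\vec u_n(0)$ and works with the first nonlinear profile $U^1$, which satisfies $\|\vec U^1_L(0)-(Q_k,0)\|_{\HHH}\le\eps$ by the Pythagorean expansion. There are then two short cases. If $\vec U^1_L(0)=(Q_k,0)$ exactly, the Pythagorean expansion of the \emph{energy} forces $\lim_n E(\vec u_n(0))>E(Q_k,0)$ (the remaining profiles/remainder carry strictly positive energy since $\eps>0$), contradicting the fact that $E(\vec u_n)\to E(Q_k,0)$ from the endpoint condition. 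If $\vec U^1_L(0)\neq(Q_k,0)$, then $U^1$ is \emph{not} stationary (Proposition~\ref{P:stationary} plus smallness of $\eps$), so by the \emph{contrapositive} of Proposition~\ref{P:rigidity} its exterior energy is strictly positive for at least one time direction. Via Proposition~\ref{P:approx_profile} this positive channel is transferred to $u_n$ at time $s_n$ or $t_n$, yielding $\int_{|x|>|\sigma_n|+1}|\nabla_{t,x}u_n(\sigma_n)|^2\ge\eta/2>0$. Since $|\sigma_n|\to\infty$ (continuity of the flow) and $\vec u_n(\sigma_n)\to(\pm Q_k,0)$, the left side must go to $0$, a contradiction.

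Your route, by contrast, tries to construct a global $u_\infty$ with $\vec u_\infty(t)\to(Q_k,0)$ at both ends and then check the \emph{hypothesis} of Proposition~\ref{P:rigidity} directly. Three concrete difficulties arise. First, you have no argument ruling out that the weak limit of $\vec u_n(0)$ is exactly $(Q_k,0)$: weak convergence does not preserve the distance $\eps$, and this case (handled by the paper's energy argument above) is precisely what blocks you from asserting $u_\infty$ is nonstationary. Second, you have no mechanism to prove $u_\infty$ is \emph{global}, let alone that $\vec u_\infty(t)\to(Q_k,0)$ as $t\to\pm\infty$; the endpoint information on $u_n$ lives at times $s_n,t_n$ escaping to $\pm\infty$, and conservation of energy in the focusing regime does not give uniform $\HHH$ bounds along the whole trajectory. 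Third, even granting those claims, your justification that the exterior energy of $u_\infty$ vanishes (``$u_\infty-Q_k$ behaves like a free wave in the exterior, hence its channel must vanish or energy would escape'') is circular: producing a nonzero channel is exactly what happens for nonstationary solutions, and it is not contradicted by mere local convergence plus equality of energies. The paper avoids all of this by applying rigidity contrapositively to $U^1$ and pushing the resulting channel back onto $u_n$.
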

This type of result is important to study the global dynamics of \eqref{eq:NLW} from a dynamical system point of view (see e.g. \cite{NaSc11Bo} for application of this type of one pass theorems in the context of nonlinear dispersive equations).

Our method also gives the classification of the dynamics below and at the ground state energy, in the spirit of \cite{KeMe08} and \cite{DuMe09b}.
By definition, the \emph{ground state} is the least energy nonzero stationary solution $Q_0$. The ground state and its opposite $-Q_0$ are the unique minimizers for the 
Sobolev type inequality: $\|f\|_{L^{2m+2}(\Omega)}\lesssim \|\nabla f\|_{L^2(\Omega)}$ (see Proposition \ref{prop:minimizer}).
As an immediate consequence of Theorem \ref{T:classification}, variational considerations and Proposition \ref{P:rigidity}, we obtain the classification of the dynamics below the energy of $Q_0$:
\begin{corollary}
 \label{cor:ground_state}
 Let $(u_0,u_1)\in\HHH$ such that $E(u_0,u_1)\leq E(Q_0,0)$, $u$ be the corresponding solution of \eqref{eq:NLW}, and $(T_-,T_+)$ the maximal interval of existence of $u$. 
 \begin{itemize}
 \item If $\int_{\Omega} |\nabla u_0|^2<\int_{\Omega} |\nabla Q_0|^2$, then $u$ is global, 
 $$\forall t\in \R,\quad \int_{\Omega}|\nabla u(t)|^2<\int_{\Omega} |\nabla Q_0|^2,$$
 and either $u$ scatters in both time directions, or $E(u_0,u_1)=E(Q_0,0)$ and there exists a sign $\pm$ such that $u$ scatters as $t\to \mp\infty$ and 
 \begin{equation}
  \label{CV_Q}
 \lim_{t\to \pm\infty}\|\vec{u}(t)-(Q_0,0)\|_{\HHH}=0.
 \end{equation} 
 \item If $\int_{\Omega}|\nabla u_0|^2=\int |\nabla Q_0|^2$, then $u$ is one of the two stationary solutions $\pm Q_0$.
 \item If $\int_{\Omega}|\nabla u_0|^2>\int_{\Omega}|\nabla Q_0|^2$, then 
 $$\forall t\in (T_-,T_+),\quad \int_{\Omega}|\nabla u(t)|^2>\int_{\Omega} |\nabla Q_0|^2.$$
 Furthermore, at most one of the times $T_+$ or $T_-$ is infinite. If $T_{\pm}$ is infinite for one sign $\pm$, then $E(u_0,u_1)=E(Q_0,0)$ and \eqref{CV_Q} is satisfied.
 \end{itemize}
\end{corollary}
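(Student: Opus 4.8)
The plan is to combine Theorem~\ref{T:classification}, Proposition~\ref{P:rigidity} and the variational structure of the energy, in the spirit of \cite{KeMe08,DuMe09b}. I would first record the variational facts. Set $N_0:=\int_\Omega|\nabla Q_0|^2$. Testing $-\Delta Q_0=|Q_0|^{2m}Q_0$ against $Q_0$ gives $\int_\Omega|Q_0|^{2(m+1)}=N_0$, hence $E(Q_0,0)=\frac{m}{2(m+1)}N_0$; and by Proposition~\ref{prop:minimizer} the sharp inequality $\int_\Omega|f|^{2(m+1)}\le N_0^{-m}\bigl(\int_\Omega|\nabla f|^2\bigr)^{m+1}$ holds for radial $f\in\dot H^1_0(\Omega)$, with equality only for $f=\pm Q_0$. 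Consequently, writing $y=\int_\Omega|\nabla w|^2$ for a state $\vec w\in\HHH$, one has $E(\vec w)\ge\frac12\|\partial_t w\|_{L^2}^2+g(y)$ with $g(y)=\frac12 y-\frac{N_0^{-m}}{2(m+1)}y^{m+1}$, where $g$ is nonnegative and increasing on $[0,N_0]$, decreasing on $[N_0,\infty)$, with $g(N_0)=E(Q_0,0)$. The usual trapping argument then follows: if $E(\vec u)\le E(Q_0,0)$ and $\int_\Omega|\nabla u_0|^2\neq N_0$, then $\int_\Omega|\nabla u(t)|^2\neq N_0$ on $I_{\max}(u)$ — otherwise at the first such time $t^\ast$ the identity $g(\int_\Omega|\nabla u(t^\ast)|^2)=E(Q_0,0)\ge E(\vec u)$ forces $\partial_t u(t^\ast)=0$ and equality in the sharp Sobolev inequality, so $u(t^\ast)=\pm Q_0$ and $u\equiv\pm Q_0$, a contradiction — and by connectedness $\int_\Omega|\nabla u(t)|^2$ stays on the same side of $N_0$ as $\int_\Omega|\nabla u_0|^2$.

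For the first case, $\int_\Omega|\nabla u_0|^2<N_0$, the trapping gives $\int_\Omega|\nabla u(t)|^2<N_0$ and, since $g\ge0$ on $[0,N_0]$, $\|\partial_t u(t)\|_{L^2}^2\le2E(Q_0,0)$, so $\|\vec u(t)\|_{\HHH}$ is bounded on $I_{\max}(u)$; together with the radial pointwise bound $\|u(t)\|_{L^\infty(\Omega)}\lesssim\|\nabla u(t)\|_{L^2}$ (valid since $|x|\ge1$ on $\Omega$), which turns the nonlinearity into a bounded potential, the local theory yields $I_{\max}(u)=\R$. I would then apply Theorem~\ref{T:classification} as $t\to+\infty$, getting $\vec u(t)\to\vec v_L(t)+(Q,0)$ in $\HHH$ with $Q\in\{0\}\cup\bigcup_j\{\pm Q_j\}$ by Proposition~\ref{P:stationary}, and use the dispersive decay of radial linear waves outside the ball to obtain the energy splitting $E(\vec u)=E(Q,0)+\frac12\|\nabla v_L(t)\|_{L^2}^2+\frac12\|\partial_t v_L(t)\|_{L^2}^2$, so $E(Q,0)\le E(\vec u)\le E(Q_0,0)$. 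If $Q=\pm Q_j$, the monotonicity of $k\mapsto E(Q_k,0)$ forces $j=0$, $E(\vec u)=E(Q_0,0)$, $v_L\equiv0$, i.e. $\vec u(t)\to(\pm Q_0,0)$; otherwise $Q=0$ and $u$ scatters forward, and the same alternative holds backward. If $\vec u(t)$ converged to $(\pm Q_0,0)$ in both directions, then $\int_{|x|>|t|}|\nabla_{t,x}u|^2\to0$ as $t\to\pm\infty$, so Proposition~\ref{P:rigidity} would make $u$ stationary, forcing $\int_\Omega|\nabla u_0|^2=N_0$; hence $u$ scatters in at least one direction. If moreover $E(\vec u)<E(Q_0,0)$, then $E(Q,0)<E(Q_0,0)$ excludes $Q=\pm Q_j$ in both directions, so $u$ scatters both ways. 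This is the first item.

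The other two cases are similar. When $\int_\Omega|\nabla u_0|^2=N_0$, the sharp Sobolev inequality gives $E(\vec u)=\frac12 N_0+\frac12\|u_1\|_{L^2}^2-\frac1{2(m+1)}\int_\Omega|u_0|^{2(m+1)}\ge E(Q_0,0)+\frac12\|u_1\|_{L^2}^2$, which with $E(\vec u)\le E(Q_0,0)$ forces $u_1=0$ and equality in Sobolev, i.e. $u_0=\pm Q_0$, so $u\equiv\pm Q_0$. When $\int_\Omega|\nabla u_0|^2>N_0$, the trapping gives $\int_\Omega|\nabla u(t)|^2>N_0$ on $(T_-,T_+)$; assuming $T_+=+\infty$, Theorem~\ref{T:classification} and the energy splitting give that either $Q=\pm Q_j$ with $j=0$, $v_L\equiv0$, $E(\vec u)=E(Q_0,0)$ and \eqref{CV_Q}, or $Q=0$ and $u$ scatters forward, in which case $\int_\Omega|u(t)|^{2(m+1)}\to0$ (scattering plus the $L^{2(m+1)}(\Omega)$-decay of radial linear waves), hence $\int_\Omega|\nabla u(t)|^2\le\frac{m}{m+1}N_0+o(1)<N_0$ for $t$ large, contradicting $\int_\Omega|\nabla u(t)|^2>N_0$; thus $T_+=+\infty$ entails $E(\vec u)=E(Q_0,0)$ and \eqref{CV_Q}, symmetrically for $T_-$. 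Finally $T_-=T_+=+\infty$ is impossible, since it would give convergence to $(\pm Q_0,0)$ as $t\to\pm\infty$, whence $u=\pm Q_0$ by Proposition~\ref{P:rigidity} and $\int_\Omega|\nabla u_0|^2=N_0$, a contradiction. This proves the third item.

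The hard part will be the energy-splitting identity $E(\vec u)=E(Q,0)+E_{\mathrm{lin}}(v_L)$: one must show that the radiation $v_L$ carries no potential energy asymptotically and decouples from $Q$ in the kinetic terms. For radial solutions outside the ball with $m>2$ this rests on the uniform bound $|v_L(t,r)|\lesssim r^{-1/2}$ together with the strong Huygens principle, so that $\|v_L(t)\|_{L^{2(m+1)}(\Omega)}\to0$ by dominated convergence (the profile $r^{1-m}$ being integrable for $m>2$), and on local energy decay for the non-trapping obstacle, so that the cross terms $\langle\nabla v_L(t),\nabla Q\rangle$ vanish; these ingredients already appear in the proof of Theorem~\ref{T:classification} and would just be cited. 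A secondary point is the global existence in the subthreshold case, where the a priori bound lives in $\HHH$, below the scaling-critical space of the supercritical equation \eqref{eq:NLW}; this is harmless thanks to the radial $L^\infty$ bound above, which makes the Cauchy theory behave as in the energy-subcritical setting.
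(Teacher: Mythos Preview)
Your argument is correct and follows essentially the same route as the paper: sharp Sobolev (Proposition~\ref{prop:minimizer}) gives the function $g$ and the trapping dichotomy \eqref{sub}/\eqref{super}, then Theorem~\ref{T:classification} plus the energy splitting identifies $Q$ and forces $j=0$, and Proposition~\ref{P:rigidity} excludes convergence to $\pm Q_0$ in both time directions. Two minor remarks: the global existence in the sub-threshold case is already built into the paper's local theory via the lower bound $T^*\geq \overline{C}\,\|\vec u_0\|_{\HHH}^{-2m}$ in Proposition~\ref{pooiu}, so your radial $L^\infty$ comment is exactly what underlies that estimate rather than an additional step; and the ``hard part'' you flag---the energy splitting $E(\vec u)=E(Q,0)+\tfrac12\|\vec v_L(0)\|_{\HHH}^2$---is in fact immediate from Lemma~\ref{L:asymptotic} (which gives $\|v_L(t)\|_{L^{2(m+1)}}\to 0$ by interpolation between $L^6$ and $L^\infty$, and local energy decay via the explicit Huygens formula~\eqref{exp_u_lin}), so no new analysis is needed beyond what is already cited.
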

In particular, if $E(u_0,u_1)<E(Q_0,0)$, there is an exact scattering/blow-up dichotomy, in the spirit of the articles of Kenig and Merle \cite{KeMe06,KeMe08} on critical Schr\"odinger and wave equations on $\R^N$. At the threshold energy, as in \cite{DuMe09a,DuMe09b}, a new type of solutions arise, satisfying \eqref{CV_Q} for one (and only one) sign $\pm$. As in \cite{DuMe09b}, one could prove the existence and uniqueness of such solutions, using the unique negative eigenvalue of the linearized operator at $Q_0$. We plan to treat these questions in a forthcoming paper.

Let us mention some related works. The defocusing energy-critical wave equation with a potential in dimension $3$ is considered in \cite{JiaLiuXu15,JiLiScXu17,JiLiScXu17P}. For this equation, there is no blow-up in finite time and every solution is global and scatters to a stationary solution, in the sense that the conclusion of Theorem \ref{T:classification} holds. The set of stationary solution for this equation is not classified as in Proposition \ref{P:stationary}, altough it is proved that for generic potential this set is finite. We refer to \cite{KeLaSc14} for the study of equivariant wave maps outside a ball. Again, there is no blow-up in finite time and every solution scatters to a stationary solution (an harmonic map), which is uniquely determined by the equivariance map of the equation. The underlying space dimension in \cite{KeLaSc14} is $5$, which makes the proofs more technically challenging, however the dynamics of equation \eqref{eq:NLW} is somehow richer, since blow-up in finite time is allowed, and there is a countable family of stationary solutions. In particular, one might contemplate solutions of \eqref{eq:NLW} that scatter to two distinct stationary solutions as $t\to+\infty$ and $t\to-\infty$.  %We see equation \eqref{eq:NLW} as an interesting object of study, that captures typical dynamical properties of dispersive non-integrable focusing dispersive equations and for which it seems to have a very complete description of the dynamics.

The outline of the paper is as follows. In Section \ref{S:preliminaries} we give some preliminaries on well-posedness (including a new profile decomposition for equation \eqref{eq:NLW}) and stationary solutions of \eqref{eq:NLW}. In Section \ref{S:classification} we prove our main result, the classification Theorem \ref{T:classification}. In Section \ref{S:further} we prove Corollary \ref{cor:ground_state} and \ref{T:one-pass}. Both proofs are short, relying on the rigidity Proposition \ref{P:rigidity}, and, for Corollary \ref{cor:ground_state}, on Theorem \ref{T:classification}.

% \textbf{THOMAS: Remains to do
% \begin{enumerate}
% \item Strichartz estimates adapted do the critical Sobolev space? local well-posedness and small data theory in the nonradial case, in the critical Sobolev space. 
% Prove that Theorem 1.1 also holds in the critical Sobolev space? (Optional \underline{Jianwei}?)
% \end{enumerate}
% }
\medskip

\subsection*{Notations}
If $a$ and $b$ are two positive quantities, we write $a\lesssim b$ when there exists a constant $C>0$ such that $a\leq C b$. 
We will write $a\approx b$ when we have both $a\lesssim b$ and $b\lesssim a$. We will write $a\ll b$ (resp. $a\gg b$) if there exists a sufficiently  large constant
$C>0$ such that $Ca\leq  b$ (resp. $a\geq Cb$).
We denote $\mathbb{N}$ the set of natural numbers.

We use $\mathbb B$ to denote the unit open ball
$\{x\in \mathbb R^{3}: |x|< 1\}$ and $\Omega=\R^3\setminus\mathbb{B}$.

The homogeneous Sobolev space $\dot{H}^1_0(\Omega)$ to be used frequently is defined as the closure of $C^{\infty}_{0}(\overline{\Omega})$ under the
$\dot H^{1}$ norm.  We refer to \cite{BlSmSo09,Burq03,Metcalfe04} for a systematic investigation on the homogeneous space $\dot{H}^s_D(\Omega)$ associated to the Laplacian $\Delta=\Delta_{D}$ subject to the Dirichlet boundary condition $u|_{\partial\Omega}=0$, with fractional $s$. We remark that  $\|f\|_{\dot{H}^1_0}\approx\|\sqrt{-\Delta}f\|_{L^2}$, where the latter norm is defined via the spectral resolution of $\Delta$.

For a radial function $f$ depending on $t$ and $r:=|x|$, we let $
\vec{f}:= (f,\partial_{t} f).
$
We let $L^p_t(I,L^q_x)$ be the space of measurable functions $f$ on $I\times \R^3$ such that
$$ \|f\|_{L^p_t(I,L^q_x(\Omega))}=\left(\int_I \left( \int_{\Omega}|f(t,x)|^qdx\right)^{\frac{p}{q}}\,dt  \right)^{1/p}<\infty.$$ 
For $q>1$, we use $q'=\frac{q}{q-1}$
to mean its Lebesgue conjugate. 
\\
\\
%Let $\chi$ be a radial smooth function such that
%\begin{equation}
%\left\{
%\begin{array}{l}
%\chi(|x|) = 1, \, |x| \geq \frac{1}{2} \\
%\chi(|x|) = 0, \, |x| \leq \frac{1}{4}\cdot
%\end{array}
%\right.
%\nonumber
%\end{equation}
%Given $R > 0$, we denote by $\chi_{R}(x) := \chi \left( \frac{|x|}{R} \right)$. We let $B_R$ be the Euclidean ball of $\RR^3$:
%$$B_R=\left\{x\in \RR^3\;:\;|x|<R\right\}.$$
We denote by $S_{\rm L}(t)$ the linear propagator, \emph{i.e.}
\begin{equation*}
S_{\rm L}(t)(w_0,w_1) :=
\cos{(t\sqrt{-\Delta}_D)}w_0+\frac{\sin{(t\sqrt{-\Delta}_D)}}{\sqrt{-\Delta}_D}w_1.
\end{equation*}

\subsection*{Acknowledgement}
The first author would like to thank Piotr Bizo\'n for introducing equation \eqref{eq:NLW} and fruitful discussions on the subject.

\section{Preliminaries }
\label{S:preliminaries}
\subsection{Radial  linear wave solutions on $\Omega$}

% Let $v(t,x)$ be a solution to the Cauchy problem
% \begin{equation}
% \label{1}
% (\partial_{t}^{2}-\Delta) v(t,x)=0\,,\,
% (v,\,\partial_{t}v)|_{t=0}=(v_{0},\,v_{1})\,,\,
% t\in\mathbb{R},\,x\in \mathbb{R}^{3}\,,
% \end{equation}
% where the initial data is in $\dot{\mathcal{H}}^1_{\rm rad}(\R^3)$. 
% Denote by $r=|x|$ and set
% \begin{equation}
% \label{3}
% F(\sigma)=\frac12\sigma\, v_{0}(|\sigma|)+\frac12\int_{0}^{|\sigma|}r\,v_{1}(r)\,dr.
% \end{equation}
% An explicit computation, using
% \begin{equation}
% \label{eq_v}
% (\partial_t^2-\partial_r^2)(rv)=0
% \end{equation} 
% yields 
% \begin{equation}
% \label{2} v(t,|x|)=\frac{1}{|x|}\bigl(F(t+|x|)-F(t-|x|)\bigr).
% \end{equation}
Consider $u(t,x)$ a radial solution of \eqref{eq:LW}.
Assume that $(u_0,u_1)\in C^2(\R)$.
Using that $(\partial_t^2-\partial_r^2)(ru)=0$ and the boundary condition $u(t,1)=0$, we deduce that
\begin{equation}
\label{exp_u_lin}
ru=\psi(t+r)-\psi(t+2-r) 
\end{equation} 
for some function $\psi\in C^2(\R)$. One can compute $\psi$ using the initial condition:
\begin{equation}
\label{value_psi}
 \psi(\sigma)=\begin{cases}
\frac{1}{2}\Bigl[ \int_{1}^{2-\sigma} \rho u_1(\rho)\,d\rho-(2-\sigma)u_0(2-\sigma)\Bigr] ,&\text{ if }\sigma<1\\
       \frac{1}{2}\Bigl[   \int_{1}^{\sigma} \rho u_1(\rho)\,d\rho+\sigma\,u_0(\sigma)\Bigr] ,    &\text{ if }\sigma>1.
              \end{cases}
\end{equation}

and thus:
\begin{multline}
 \label{exp_u_lin_2}
 2ru(t,r)=\\\begin{cases} \int_{t+2-r}^{t+r} \rho u_1(\rho)\,d\rho +(r+t)u_0(r+t)-(t+2-r)u_0(t+2-r),&r-1<t,\\
        \int_{r-t}^{t+r} \rho u_1(\rho)\,d\rho +(r+t)u_0(r+t)+(r-t)u_0(r-t),&r-1>|t|\\
         \int_{r-t}^{2-t-r} \rho u_1(\rho)\,d\rho -(2-r-t)u_0(2-r-t)-(r-t)u_0(r-t),&r-1<-t.
        \end{cases}
\end{multline} 
We will also need the following exterior energy bound: 
\begin{lemma}
 \label{L:exterior_energy}
 Let $R\geq 1$, and $u$ be a radial solution of the linear wave equation \eqref{eq:LW} with initial data $(u_0,u_1)\in \HHH$. Then
 \begin{equation}
 \label{ext_bnd}
 \sum_{\pm}\lim_{t\to\pm\infty} \int_{R+|t|}^{\infty} (\partial_r(ru))^2+(\partial_t(ru))^2\,dr=\int_{R}^{+\infty} (\partial_r(ru_0))^2+r^2u_1^2\,dr.  
 \end{equation} 
\end{lemma}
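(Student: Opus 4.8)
The plan is to reduce everything to the one‑dimensional picture furnished by \eqref{exp_u_lin}--\eqref{value_psi}. Writing $w(t,r)=ru(t,r)$, for $C^2$ data one has $w(t,r)=\psi(t+r)-\psi(t+2-r)$ with $\psi\in C^2$, hence $\partial_r w=\psi'(t+r)+\psi'(t+2-r)$ and $\partial_t w=\psi'(t+r)-\psi'(t+2-r)$, so that $(\partial_r w)^2+(\partial_t w)^2=2\psi'(t+r)^2+2\psi'(t+2-r)^2$. The first step is to reduce to smooth data $(u_0,u_1)$ with compact support in $\Omega$; for such data $\psi'$ is continuous and compactly supported by \eqref{value_psi}, so all the integrals below converge and one may take limits freely. (Alternatively $C^2$ data with $\HHH$‑decay works, since then $\psi'\in L^2(\R)$, which is all the argument needs.) This reduction is legitimate because both sides of \eqref{ext_bnd} are continuous in $(u_0,u_1)\in\HHH$: the right‑hand side is $\lesssim\|(u_0,u_1)\|_{\HHH}^2$ and depends continuously on the data, while for the left‑hand side one observes that $\int_1^\infty (\partial_r w)^2+(\partial_t w)^2\,dr=2\|\psi'\|_{L^2(\R)}^2$ is independent of $t$ (the conserved energy of the reduced equation); hence, by the triangle inequality in $L^2((R+|t|,\infty))$, the quantity $\bigl(\int_{R+|t|}^\infty (\partial_r w)^2+(\partial_t w)^2\,dr\bigr)^{1/2}$ is, uniformly in $t\in\R$, a limit of the corresponding quantities for a sequence of smooth compactly supported data; uniform convergence then transfers both the existence of the limits $t\to\pm\infty$ and the value of their sum.

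The core computation is a change of variables. Fix smooth, compactly supported data. For $t>0$ and $r>R+t$ one has $t+r>2t+R$ and $t+2-r<2-R\le 1$; substituting $\sigma=t+r$ in the first term gives $2\int_{2t+R}^\infty\psi'(\sigma)^2\,d\sigma\to 0$ as $t\to+\infty$ (a tail of an $L^2$ function), while substituting $\sigma=t+2-r$ in the second term gives the $t$‑independent quantity $2\int_{-\infty}^{2-R}\psi'(\sigma)^2\,d\sigma$, so
\[
\lim_{t\to+\infty}\int_{R+t}^\infty (\partial_r w)^2+(\partial_t w)^2\,dr=2\int_{-\infty}^{2-R}\psi'(\sigma)^2\,d\sigma .
\]
Symmetrically, for $t<0$ and $r>R+|t|=R-t$ one has $t+r>R$ and $t+2-r<2t+2-R$; the first term now contributes the $t$‑independent $2\int_R^\infty\psi'(\sigma)^2\,d\sigma$ and the second contributes $2\int_{-\infty}^{2t+2-R}\psi'(\sigma)^2\,d\sigma\to 0$ as $t\to-\infty$, so
\[
\lim_{t\to-\infty}\int_{R+|t|}^\infty (\partial_r w)^2+(\partial_t w)^2\,dr=2\int_{R}^{\infty}\psi'(\sigma)^2\,d\sigma .
\]

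It remains to identify $2\int_{-\infty}^{2-R}\psi'^2+2\int_R^\infty\psi'^2$ with the right‑hand side of \eqref{ext_bnd}. Evaluating $(\partial_r w)^2+(\partial_t w)^2$ at $t=0$ gives $2\psi'(r)^2+2\psi'(2-r)^2$, and since $w(0,r)=ru_0(r)$ and $\partial_t w(0,r)=ru_1(r)$, the right‑hand side of \eqref{ext_bnd} equals $\int_R^\infty 2\psi'(r)^2+2\psi'(2-r)^2\,dr$; the substitution $\sigma=2-r$ in the second summand turns this into exactly $2\int_R^\infty\psi'^2+2\int_{-\infty}^{2-R}\psi'^2$, i.e.\ the sum of the two limits. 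This proves \eqref{ext_bnd}.

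I expect the only genuine difficulty to be the approximation step: one must guarantee that the limits $t\to\pm\infty$ on the left of \eqref{ext_bnd} exist for arbitrary $\HHH$ data before passing to the limit in the identity, and this is precisely what the uniform‑in‑$t$ convergence — coming from the $t$‑independence of $\int_1^\infty (\partial_r w)^2+(\partial_t w)^2\,dr$ — provides. Everything else is routine bookkeeping: tracking the ranges of $t+r$ and $t+2-r$ for $r>R+|t|$ with $R\ge 1$ (which keeps $t+2-r\le 1$, so the reflected argument never leaves the regime described by \eqref{value_psi}), and the boundary case $R=1$ needs no separate treatment.
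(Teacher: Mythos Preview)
Your proof is correct and follows essentially the same approach as the paper's: reduce by density to $C^2$ data, use the explicit formula $w=\psi(t+r)-\psi(t+2-r)$ to get $(\partial_r w)^2+(\partial_t w)^2=2\psi'(t+r)^2+2\psi'(t+2-r)^2$, and verify by change of variables that both sides of \eqref{ext_bnd} equal $2\int_R^\infty\psi'^2+2\int_{-\infty}^{2-R}\psi'^2$. The paper's proof is much terser (it simply asserts the density step and the final identity), whereas you have spelled out the limit computations and justified the approximation carefully via the $t$-independent conserved quantity, but the substance is identical.
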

\begin{proof}
By density, we can assume that $(u_0,u_1)$ is $C^2$.
 By explicit computation, and \eqref{exp_u_lin},
 $$(\partial_r(ru)^2+(\partial_t(ru))^2=2(\dot{\psi}^2(t+r)+\dot{\psi}^2(t+2-r)),$$
 and one can check that both sides of \eqref{ext_bnd} equal
 $$ 2\int_R^{\infty}\dot{\psi}^2+2\int_{-\infty}^{2-R} \dot{\psi}^2.$$
\end{proof}
\begin{remark}
 In the case $R=1$, we can check by integration by parts that
 $$\int_{1+|t|}^{+\infty}(\partial_r(ru))^2\,dr=\int_{1+|t|}^{+\infty}(\partial_ru)^2r^2\,dr+o(1),\quad t\to \pm \infty
 $$
 and
 $$ \int_1^{+\infty}(\partial_r(ru_0))^2\,dr=\int_1^{+\infty}(\partial_r u_0)^2r^2\,dr,$$
 and the preceeding lemma is equivalent to 
 $$\sum_{\pm}\lim_{t\to \pm\infty}\int_{|x|\geq 1+|t|}|\nabla u(t)|^2+(\partial_tu(t))^2\,dx=\int_{|x|\geq 1}|\nabla u_0|^2+u_1^2.$$
\end{remark}
The following asymptotics follow from \eqref{value_psi}
\begin{lemma}
 \label{L:asymptotic}
 For all $(u_0,u_1)\in \HHH$, we have, denoting by $u$ the solution of \eqref{eq:LW}
 \begin{equation}
  \label{Lpzero}\lim_{t \to \pm \infty} \int \frac{1}{|x|^2}|u(t,x)|^2dx+\|u(t)\|_{L^6(\Omega)\cap L^{\infty}(\Omega)}=0.
 \end{equation} 
 For both signs $+$ and $-$, there exists $G_{\pm}\in L^2(\R)$ such that 
 \begin{align}
  \label{as_dru}
  \lim_{t\to\pm \infty}\int_{1}^{\infty} \left|r\partial_ru(t,r)-G_{\pm}(r \mp t)\right|^2\,dr&=0\\
  \label{as_dtu}
  \lim_{t\to\pm \infty}\int_{1}^{\infty} \left|r\partial_tu(t,r)\pm G_{\pm}(r \mp t)\right|^2\,dr&=0.
 \end{align}
 Furthermore
 \begin{equation}
  \label{enG+-}
  \int_{\R} G_+^2(\eta)\,d\eta=
  \int_{\R} G_-^2(\eta)\,d\eta=\frac 12 \int_{1}^{+\infty} \left((\partial_tu(t,r))^2+(\partial_ru(t,r))^2\right)r^2dr, 
 \end{equation} 
 and both maps $(u_0,u_1)\to G_{\pm}$ are bijective.
\end{lemma}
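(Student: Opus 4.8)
The plan is to reduce everything to the explicit representation formula \eqref{exp_u_lin}, $ru(t,r)=\psi(t+r)-\psi(t+2-r)$ with $\psi\in C^2$ given by \eqref{value_psi}, and then to argue by density, since all the quantities in the statement are controlled by the $\HHH$-norm via \eqref{enG+-} and the energy identity of Lemma \ref{L:exterior_energy}. So first I would fix $(u_0,u_1)\in C^2$ compactly supported and establish everything for such data, keeping track of which estimates are bounded by $\|(u_0,u_1)\|_\HHH$ so that the general case follows by a limiting argument. For a $C^2$ compactly supported datum, $\psi$ is $C^2$ and supported in a half-line bounded below (for the $+$ part the relevant tail is $\dot\psi$ near $+\infty$), and $\dot\psi\in L^2(\R)$ by the $R=1$ case of Lemma \ref{L:exterior_energy}.

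For \eqref{Lpzero}: from \eqref{exp_u_lin}, $|ru(t,r)|\le |\psi(t+r)|+|\psi(t+2-r)|$; using $\psi(\sigma)=\int_{\sigma_0}^\sigma\dot\psi$ and Cauchy–Schwarz, $|\psi(\sigma)|\lesssim \langle\sigma\rangle^{1/2}\|\dot\psi\|_{L^2}$ but more usefully, on the support considerations and with $r\ge 1+|t|$ versus $r\le 1+|t|$ split as in \eqref{exp_u_lin_2}, one gets $\frac1r|u(t,r)|\to 0$ in $L^2(r^2\,dr)$ and uniformly, because the mass of $\dot\psi$ escapes to infinity; the $L^6$ bound follows by interpolating $\|u(t)\|_{L^\infty}\to 0$ with the conserved $\dot H^1$ bound, or directly from the $\frac{1}{|x|}$-weighted $L^2$ bound and Hardy's inequality. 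The point is pure translation: $\psi(t+r)$ has its "active region" drifting to $r\approx -t\to+\infty$ as $t\to-\infty$, off any fixed compact set.

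For \eqref{as_dru}–\eqref{as_dtu}: differentiating \eqref{exp_u_lin}, $\partial_r(ru)=\dot\psi(t+r)+\dot\psi(t+2-r)$ and $\partial_t(ru)=\dot\psi(t+r)-\dot\psi(t+2-r)$, hence $r\partial_r u=\dot\psi(t+r)+\dot\psi(t+2-r)-\frac1r(ru)$ and similarly for $r\partial_t u$; the $\frac1r(ru)=u$ term is $o(1)$ in $L^2(dr)$ on $r\ge1$ by \eqref{Lpzero} (indeed $\int_1^\infty u^2\,dr\le \int |x|^{-2}|u|^2\,dx\cdot(\text{const})$… more carefully $\int_1^\infty u^2 dr=\int_1^\infty \frac{(ru)^2}{r^2}dr\le \int_1^\infty \frac{|u|^2}{1}dr$, so use instead that $u\to0$ in $\frac1{|x|}L^2$ which gives exactly $\int_1^\infty u^2 dr\to0$). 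As $t\to-\infty$ the term $\dot\psi(t+2-r)$ tested on $r\ge1$ becomes $\dot\psi(s)$ with $s=t+2-r\le t+1\to-\infty$, which tends to $0$ in $L^2(dr)$ since $\dot\psi\in L^2$ near $-\infty$; so the surviving term is $\dot\psi(t+r)$. Set $G_-(\eta):=\dot\psi(-\eta)$, wait — matching signs, for $t\to-\infty$ put $G_-(y):=\dot\psi(2+y)$ evaluated appropriately; concretely choosing $G_\pm$ so that $\dot\psi(t+r)=G_\pm(r\mp t)$ up to the vanishing pieces gives \eqref{as_dru} with $r\partial_ru\approx G_\pm(r\mp t)$ and \eqref{as_dtu} with $r\partial_tu\approx \mp G_\pm(r\mp t)$ (opposite sign from the $\dot\psi(t+2-r)$ cancellation pattern). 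Then \eqref{enG+-} is immediate: $\int_\R G_\pm^2=\int (\text{tail of }\dot\psi)^2$, which by the computation in the proof of Lemma \ref{L:exterior_energy} (with $R=1$, and using that the inner term contributes negligibly as $t\to\pm\infty$ — this is exactly the Remark after that lemma) equals $\frac12\int_1^\infty (u_t^2+u_r^2)r^2\,dr=E_{\mathrm{lin}}$, the conserved linear energy restricted to $r\ge1$; in particular this is time-independent, consistent with the equality of the $+$ and $-$ integrals.

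For bijectivity of $(u_0,u_1)\mapsto G_\pm$: injectivity follows from \eqref{enG+-} combined with the fact that $(u_0,u_1)\mapsto \|(u_0,u_1)\|_\HHH$ and the quadratic form $\int_1^\infty(u_t^2+u_r^2)r^2$ on solutions are equivalent norms (the Dirichlet condition kills the radial $\dot H^1$ issue at $r=1$), so $G_\pm=0\Rightarrow (u_0,u_1)=0$. Surjectivity: given $G\in L^2(\R)$, unwind \eqref{value_psi} — i.e.\ construct $\psi$ with $\dot\psi$ matching $G$ on the relevant half-line and extended by the reflection dictated by $u(t,1)=0$ (which forces $\psi(\sigma)$ for $\sigma<1$ in terms of $\psi$ on $\sigma>1$), then read off $(u_0,u_1)$ from \eqref{value_psi}; one checks the resulting datum is in $\HHH$ using \eqref{enG+-} again. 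I expect the main obstacle to be bookkeeping the sign conventions and the three-region formula \eqref{exp_u_lin_2} carefully enough to pin down $G_\pm$ with the correct signs in \eqref{as_dru}–\eqref{as_dtu}, and justifying the density/limiting step uniformly — i.e.\ that the maps $(u_0,u_1)\mapsto G_\pm$ extend continuously from $C^2$ data to all of $\HHH$, which is where \eqref{enG+-} does the essential work.
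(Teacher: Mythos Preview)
Your plan is correct and follows essentially the same route as the paper's proof: both derive everything from the representation $ru=\psi(t+r)-\psi(t+2-r)$ and \eqref{value_psi}, identify $G_+(\sigma)=\psi'(2-\sigma)$, $G_-(\sigma)=\psi'(\sigma)$ as the surviving pieces of $\partial_r(ru)$ and $\partial_t(ru)$ as $t\to\pm\infty$, read off \eqref{enG+-} from the $R=1$ case of Lemma~\ref{L:exterior_energy}, and deduce injectivity from \eqref{enG+-}. Your density framing is a reasonable way to make rigorous what the paper leaves implicit.

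Two small points. First, your sign bookkeeping: as $t\to+\infty$ it is $\dot\psi(t+r)$ (argument $\to+\infty$) that vanishes in $L^2(dr)$ and $\dot\psi(t+2-r)$ that survives, not the other way around; this gives $G_+(\sigma)=\psi'(2-\sigma)$. The $t\to-\infty$ case is as you wrote. Second, for surjectivity your phrase ``using \eqref{enG+-} again'' is a bit circular, since \eqref{enG+-} was derived assuming $(u_0,u_1)\in\HHH$. What the paper actually does (and what you need) is: from the explicit inverse formulas one gets $\int_1^\infty(\partial_r(ru_0))^2\,dr<\infty$ and $\int_1^\infty(ru_1)^2\,dr<\infty$ directly, and then an integration by parts on $\int_1^R u_0\,\partial_r(ru_0)\,dr$ yields $u_0\in L^2((1,\infty),dr)$, from which $u_0\in\dot H^1_0(\Omega)$ follows. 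There is no reflection constraint on $\psi$ coming from the boundary condition; $\psi$ on $(-\infty,1)$ and on $(1,\infty)$ are determined independently by $(u_0,u_1)$.
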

\begin{proof}
 From the formula \eqref{value_psi}, we obtain \eqref{Lpzero}, as well as \eqref{as_dru} and \eqref{as_dtu} with $G_+(\sigma)=\psi'(2-\sigma)$, $G_-(\sigma)=\psi'(\sigma)$, that is:
 \begin{align*}
  G_+(\sigma)&=\frac 12\begin{cases}
-\sigma\,u_1(\sigma)+u_0(\sigma)+\sigma\,u_0'(\sigma)&\text{ if }\sigma>1\\
(2-\sigma) u_1(2-\sigma)+u_0(2-\sigma)+(2-\sigma)u_0'(2-\sigma)&\text{ if }\sigma<1
\end{cases}\\
G_-(\sigma)&=\frac 12\begin{cases}
\sigma\,u_1(\sigma)+u_0(\sigma)+\sigma\,u_0'(\sigma)&\text{ if }\sigma>1\\
-(2-\sigma) u_1(2-\sigma)+u_0(2-\sigma)+(2-\sigma)u_0'(2-\sigma)&\text{ if }\sigma<1.
\end{cases}
 \end{align*}
Note that $u_1\in L^2_{\rm rad}(\Omega)$, $u_0\in \dot{H}^1_{\rm rad}(\Omega)$ and Hardy's inequality imply $G_{\pm}\in L^2(\R)$ as announced. Using \eqref{as_dru}, \eqref{as_dtu} and the conservation of the energy for equation \eqref{eq:LW}, we obtain \eqref{enG+-}. It remains to prove that both maps $(u_0,u_1)\mapsto G_{\pm}$ are bijective. The injectivity follows immediately from \eqref{enG+-}. 

To prove the surjectivity, we let $G_+\in L^2(\R)$ (the proof is the same for $G_-$), and define, for $r>1$,
\begin{align*}
 u_0(r)&=\frac{1}{r} \int_1^{r}(G_+(\tau)+G_+(2-\tau))\,d\tau\\
 u_1(r)&=\frac{1}{r} (G_+(2-r)-G_+(r)).
\end{align*}
We notice that $(u_0,u_1)\in \dot{\SF}_{\rm rad}^1$. Indeed, since $G_+\in L^2(\R)$, we have
$$\int_1^{+\infty} (ru_1)^2\,dr<\infty, \quad \int_{1}^{+\infty}(\partial_r(ru_0))^2\,dr\leq 2\|G_+\|_{L^2}^2.$$
Furthermore, by a straightforward integration by parts,
$$\int_1^{R} u_0(r)\frac{d}{dr}(ru_0)\,dr=\frac{1}{2}\int_1^{R}u_0^2(r)\,dr+\frac{R}{2}u_0^2(R),$$
which shows by Cauchy-Schwarz that $\int_1^R |u_0(r)|^2\,dr\leq 4\|G_+\|_{L^2}^2$ for all $R>1$, and thus $\int_1^{+\infty} |u_0(r)|^2\,dr<\infty$. 

Letting $u$ be the solution of \eqref{eq:LW} with initial data $(u_0,u_1)$, we see from \eqref{value_psi} that $u$ satisfies \eqref{as_dru}, \eqref{as_dtu} (with the $+$ sign) which concludes the proof.
\end{proof}

\subsection{An overview of the Cauchy theory in $\mathcal{H}$}
\label{SS:Cauchy}
In this subsection, we recall  the local well-posedness theory of the problem 
\eqref{eq:NLW} in the energy space with 
radial initial data.\medskip

Let us start by recalling  the 
Strichartz estimate proved in \cite{SmithSogge00,Burq03,Metcalfe04}. 
\begin{proposition}
\label{P:Stricharz}
	Let $(q,r)$ such that $1/q+3/r=1/2$ and $q>2$. Then there exists $C_0>0$ such that, if 
	$u$ is a solution to the Cauchy-Dirichlet problem
	\begin{equation}
	\label{hdjsj}
	\begin{split}
	(\partial_{t}^2-\Delta)u(t,x)&=F(t,x), \quad(t,x)\in\R\times \Omega\\
	(u(0,x),\partial_{t}u(0,x))&=(u_0,u_1)\in\dot{H}^1_0(\Omega)\times L^2(\Omega)\\
	u(t,x)&=0, \quad x\in\partial\Omega.
	\end{split}
	\end{equation} 
	one has
\begin{equation}
\label{nsscbs}
\|u\|_{L^q_t(\R; L^r_x(\Omega))}\leq C_0\left(\|u_0\|_{\dot{H}^1_0(\Omega)}+\|u_1\|_{L^2(\Omega)}+\|F\|_{L^1_t(\R;L^2_x(\Omega))}\right).
\end{equation}
\end{proposition}
In the radial case, one can extend the range of Strichartz exponents, using the radial Sobolev inequality
\begin{equation}
 \label{rad_Sob}
 \forall R>1,\quad
 |f(R)|\lesssim \frac{1}{\sqrt{R}}\|f\|_{\dot{H}^1_{\rm rad}}.
\end{equation} 
Note that \eqref{rad_Sob} implies that for $6<p\leq \infty$, $\dot{H}^1_{\rm rad}(\Omega)$ is embedded into $L^p(\Omega)$ with compact embedding.
\begin{corollary}
\label{Co:Strichartz}
Assume that $1/q+3/r\leq 
1/2$ and $r$ is finite. There exists $C_0>0$ such that if $u$ and $F$ are radial solutions of \eqref{hdjsj}, then  \eqref{nsscbs} holds.
\end{corollary}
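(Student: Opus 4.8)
The plan is to combine the sharp-admissible Strichartz estimate of Proposition \ref{P:Stricharz} with the conservation of energy and the (radial) Sobolev embedding, and then to interpolate in the time variable. Note first that the hypothesis $1/q+3/r\leq 1/2$ with $r$ finite forces $r\geq 6$ and $q>2$, so we never reach the forbidden endpoint.

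\medskip

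\emph{Step 1: energy bound.} Writing $u$ by Duhamel's formula,
$$u(t)=S_{\rm L}(t)(u_0,u_1)+\int_0^t \frac{\sin\bigl((t-s)\sqrt{-\Delta}_D\bigr)}{\sqrt{-\Delta}_D}F(s)\,ds,$$
and using that $t\mapsto S_{\rm L}(t)(w_0,w_1)$ has constant $\dot H^1_0(\Omega)\times L^2(\Omega)$ norm (built into the spectral definition of $S_{\rm L}$), together with Minkowski's inequality, one obtains
$$\|u\|_{L^\infty_t(\R;\dot{H}^1_0(\Omega))}\lesssim \|u_0\|_{\dot{H}^1_0(\Omega)}+\|u_1\|_{L^2(\Omega)}+\|F\|_{L^1_t(\R;L^2_x(\Omega))}.$$

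\medskip

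\emph{Step 2: from $\dot H^1$ to $L^p$, $p\geq 6$.} For a radial $f\in\dot{H}^1_0(\Omega)$, extending by $0$ and using $\dot H^1(\R^3)\hookrightarrow L^6(\R^3)$ gives $\|f\|_{L^6(\Omega)}\lesssim\|f\|_{\dot{H}^1_0}$, while \eqref{rad_Sob} gives $\|f\|_{L^\infty(\Omega)}\lesssim\|f\|_{\dot{H}^1_0}$; Hölder interpolation between these two yields $\|f\|_{L^p(\Omega)}\lesssim\|f\|_{\dot{H}^1_0}$ for every $p\in[6,\infty]$, with a constant that stays bounded on $[6,\infty]$. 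Together with Step 1 this proves
$$\|u\|_{L^\infty_t(\R;L^p_x(\Omega))}\lesssim \|u_0\|_{\dot{H}^1_0(\Omega)}+\|u_1\|_{L^2(\Omega)}+\|F\|_{L^1_t(\R;L^2_x(\Omega))},\qquad 6\leq p\leq\infty,$$
which already settles the corollary when $q=\infty$, in particular the borderline case $r=6$ (which forces $q=\infty$).

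\medskip

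\emph{Step 3: interpolation in time.} Let $(q,r)$ satisfy $1/q+3/r\leq 1/2$ with $r$ finite; we may assume $r>6$. Define $q_0\in(2,\infty)$ by $1/q_0+3/r=1/2$; then $(q_0,r)$ is a sharp admissible pair with $q_0>2$, so Proposition \ref{P:Stricharz} gives
$$\|u\|_{L^{q_0}_t(\R;L^r_x(\Omega))}\lesssim \|u_0\|_{\dot{H}^1_0(\Omega)}+\|u_1\|_{L^2(\Omega)}+\|F\|_{L^1_t(\R;L^2_x(\Omega))}.$$
Since $1/q+3/r\leq 1/2$ is equivalent to $q\geq q_0$, Hölder's inequality applied to $t\mapsto\|u(t)\|_{L^r_x(\Omega)}$ between the exponents $q_0$ and $\infty$ gives
$$\|u\|_{L^q_t(\R;L^r_x(\Omega))}\leq \|u\|_{L^{q_0}_t(\R;L^r_x(\Omega))}^{q_0/q}\,\|u\|_{L^\infty_t(\R;L^r_x(\Omega))}^{1-q_0/q},$$
and both factors on the right are controlled by the right-hand side of \eqref{nsscbs}, by Step 2 (with $p=r$) and the previous display. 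This is the desired estimate.

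\medskip

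There is no serious obstacle here: the argument is essentially a reduction to Proposition \ref{P:Stricharz}. The points deserving a little care are the uniformity in $p$ of the radial embedding $\dot{H}^1_0(\Omega)\hookrightarrow L^p(\Omega)$ in Step 2, the verification that the auxiliary exponent $q_0$ produced in Step 3 is always strictly larger than $2$ so that Proposition \ref{P:Stricharz} applies, and the separate (purely Sobolev) treatment of the endpoint $r=6$, for which no genuine Strichartz input is needed.
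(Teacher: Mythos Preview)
Your proof is correct and follows essentially the same route as the paper: both define the sharp admissible exponent $q_0$ with $1/q_0+3/r=1/2$, apply Proposition~\ref{P:Stricharz} at $(q_0,r)$, use the energy bound together with the radial Sobolev embedding $\dot H^1_{0,\mathrm{rad}}(\Omega)\hookrightarrow L^r(\Omega)$ to get the $L^\infty_tL^r_x$ estimate, and then interpolate in $t$. Your write-up is simply more explicit about the energy estimate (Step~1) and the treatment of the endpoint $r=6$.
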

\begin{proof}
Assume that $\frac{1}{q}+\frac{3}{r}<\frac{1}{2}$, and let $q_1$ such that $\frac{1}{q_1}+\frac{3}{r}=\frac{1}{2}$. Since $r<\infty$, $q_1>2$.
 By energy inequalities and the embedding $H^1_{\rm rad}(\Omega)\subset L^{r}(\Omega)$, we have 
 $$\|u\|_{L^{\infty}_t(\R,L^{r}(\Omega))}\lesssim \|u_0\|_{\dot{H^1}(\Omega)}+\|u_1\|_{L^2(\Omega)}+\|F\|_{L^1_t(\R,L^2(\Omega))}.$$
 By standard Strichartz estimates,
 $$\|u\|_{L^{q_1}_t(\R,L^{r}(\Omega))}\lesssim \|u_0\|_{\dot{H^1}(\Omega)}+\|u_1\|_{L^2(\Omega)}+\|F\|_{L^1_t(\R,L^2(\Omega))},$$
and \eqref{nsscbs} follows since $q_2<q<\infty$.
\end{proof}
Note that the assumption $m>2$ implies that 
$q=2m+1$, $r=2(2m+1)$ satisfy the assumptions of Corollary \ref{Co:Strichartz}.

We state our main result in this subsection.

\begin{proposition}
	\label{pooiu}
	Assume $m\in(2,+\infty)\cap\mathbb{Z}$ in \eqref{eq:NLW}.
	Then for every  $\overrightarrow{{u}_0}:=(u_0,u_1)\in \HHH$, there exists a unique maximal  radial solution $u$ of \eqref{eq:NLW}
	defined in a maximal interval $[0,T^*)$ with
	$\overrightarrow{u}(0)=\overrightarrow{u_0}$
	and 
	$T^*\geqslant\overline{C}/\|\overrightarrow{u_0}\|_{\dot{\mathcal H}^1_{\rm rad}}^{2m}$,
	for  some universal constant $\overline{C}>0$,
	satisfying 
	\[
	u\in C\left([0,T^*),\dot{H}^1_0(\Omega)\right)\cap C^1\left([0,T^*),L^2(\Omega)\right).
	\]
	In addition, we have the following properties:
	\begin{enumerate}
		\item[(i)] either $T^*=+\infty$, or $T^*<+\infty$ and
		\begin{equation}
		\label{eq:nrm-contrl}
		\lim_{T\nearrow T^*}\|u\|_{L^{2m+1}_t\left([0,T],\;L^{2(2m+1)}_x(\Omega)\right)}=+\infty.
		\end{equation}
		Moreover, for every $T\in(0,T^*)$, the flow map $(v_0,v_1)\mapsto \vec{v}
		$ (where $v$ is the solution of \eqref{eq:NLW} with initial data $(v_0,v_1)$) is Lipschitz continuous from a 
		neighborhood of $(u_0,u_1)$ in $\HHH$ to $C\left([0,T],\HHH\right)$.
		\item[(ii)] If $\overrightarrow{u_0}\in \left(\dot{H}^2(\Omega)\cap \dot{H}^1_0(\Omega)\right)\times \dot{H}^{1}_0(\Omega)$ then $\overrightarrow{u}(t)\in 
		\left(\dot{H}^2(\Omega)\cap \dot{H}^1_0(\Omega)\right)\times \dot{H}^{1}_0(\Omega)$ for all $t\in (0,T^*)$.
		\item[(iii)] $E(\overrightarrow{u}(t))=E(\overrightarrow{u}_0)$ for every $t\in[0,T^*)$.
		\item[(iv)] If $\|u\|_{L^{2m+1}_t\left([0,T^*),L^{2(2m+1)}_x(\Omega)\right)}<\infty$, then $T^*=+\infty$ and $u$ scatters in the forward time direction, i.e.
		 there exists $\overrightarrow{u}_+=(u^+_0,u^+_1)\in\HHH$ such that
		\begin{equation}
		\label{habasbs}
		\lim_{t\to+\infty}\|\overrightarrow{u}(t)-\overrightarrow{S_{\rm L}}(t)\overrightarrow{u}_+\|_{\HHH}=0,
		\end{equation}
		where $\overrightarrow{S_{\rm L}}(t)\overrightarrow{u}_+:=\left(S_{\rm L}(t)\vec{u}_+,\partial_tS_{\rm L}(t)\vec{u}_+\right)$.
	    Conversely, if $u$ scatters, then 
		\begin{equation}
		\label{ncbsb}
			\|u\|_{L^{2m+1}_{t}\left( [0,\infty),\,L^{2(2m+1)}_{x}(\Omega)\right)}+\|\nabla_{x,t}u\|_{L^{\infty}_{t}\left([0,\infty),\,L_x^2(\Omega)\right)}<+\infty.
		\end{equation}
		\item[(v)] Let $I\subset[0,\infty)$ be a sub-interval
		such that
		\begin{equation}
		\label{eq:smallnorm}
			\|S_{\rm L}(t)\overrightarrow{u_0}\|_{L^{2m+1}_{t}\left( I,\,L^{2(2m+1)}_{x}(\Omega)\right)}=\delta
		\end{equation}
		with $0<\delta\ll1$ being sufficiently small.
		Then $u$ is defined on $I$.
		In particular, $I\subset [0,T^*)$ and moreover, with $C_0$ in \eqref{nsscbs}
		\[
		\lVert u-S_{\rm L}(t)\overrightarrow{u_0} \rVert_{L^{2m+1}_{t}\left( I,\,L^{2(2m+1)}_{x}(\Omega)\right)}\leqslant 
		\varepsilon:=C_0(2\delta)^{2m+1}
		\]
		The analogs of the statements (i)-(v) hold in the negative time direction as well.
	\end{enumerate}
\end{proposition}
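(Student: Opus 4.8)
The plan is to run a standard contraction-mapping argument on the Duhamel formulation; the one structural observation is that, since $\Omega$ is the exterior of a ball, the radial Sobolev inequality \eqref{rad_Sob} and the embeddings $\dot{H}^1_{\rm rad}(\Omega)\hookrightarrow L^p(\Omega)$ ($6\le p\le\infty$) noted after it make the supercritical power $F(u)=|u|^{2m}u=u(u^2)^m$ behave, in the radial class, like a nonlinearity controlled by the energy norm. For $(u_0,u_1)\in\HHH$ set
\[
\Phi(u)(t)=S_{\rm L}(t)(u_0,u_1)+\int_0^t\frac{\sin\big((t-s)\sqrt{-\Delta}_D\big)}{\sqrt{-\Delta}_D}\,F(u(s))\,ds.
\]
I would first record the two nonlinear estimates that drive everything. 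Since $|F(u)|=|u|^{2m+1}$ one has the identity $\|F(u)\|_{L^1_t(I,L^2_x)}=\|u\|_{L^{2m+1}_t(I,L^{2(2m+1)}_x)}^{2m+1}$, and from $|F(u)-F(v)|\lesssim(|u|^{2m}+|v|^{2m})|u-v|$ together with H\"older in time, the corresponding Lipschitz bound in the same space-time norm. On the other hand, since $\|w\|_{L^\infty(\Omega)}\lesssim\|w\|_{\dot{H}^1_{\rm rad}}$, one also has $\|F(u)\|_{L^1_t([0,T],L^2_x)}\lesssim T\,\|u\|_{L^\infty_t\dot{H}^1}^{2m+1}$ and its Lipschitz analogue.

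For local existence I would combine these with the energy inequality $\|\nabla_{x,t}w\|_{L^\infty_tL^2_x}\le\|\nabla_{x,t}w(0)\|_{L^2}+\|F\|_{L^1_tL^2_x}$ for $(\partial_t^2-\Delta)w=F$, and with the Strichartz estimate of Corollary \ref{Co:Strichartz} applied to the admissible pair $(q,r)=(2m+1,2(2m+1))$. On the ball $\{\|\vec u\|_{L^\infty_t([0,T],\HHH)}\le 2\|\vec u_0\|_\HHH\}$ the bound $\|F(u)\|_{L^1_tL^2_x}\lesssim T(2\|\vec u_0\|_\HHH)^{2m+1}$ shows that $\Phi$ maps this ball into itself and is a contraction as soon as $T\le\overline C\|\vec u_0\|_\HHH^{-2m}$, which produces the solution in $C([0,T],\dot{H}^1_0)\cap C^1([0,T],L^2)$ and gives the asserted lower bound on $T^*$; feeding the $L^\infty_t\dot{H}^1$ bound back into Strichartz shows $u\in L^{2m+1}_t([0,T],L^{2(2m+1)}_x)$ automatically. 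The maximal solution, unconditional uniqueness in $C([0,T],\HHH)$, the blow-up criterion \eqref{eq:nrm-contrl}, and the Lipschitz continuity of the flow on $\HHH$ then follow from the usual continuation and Gronwall arguments: a solution on $[0,T^*)$ with $\sup_{[0,T^*)}\|\vec u(t)\|_\HHH<\infty$ could be restarted past $T^*$, and if $\|u\|_{L^{2m+1}_t([0,T^*),L^{2(2m+1)}_x)}<\infty$, one splits $[0,T^*)$ into finitely many subintervals on which this norm is $\ll1$ and runs Strichartz on each (using the identity for $\|F(u)\|_{L^1_tL^2_x}$) to get $\sup_{[0,T^*)}\|\vec u(t)\|_\HHH<\infty$, hence $T^*=+\infty$.

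Claim (v) is a second, small-data-in-Strichartz-norm fixed point: on $\{\|u\|_{L^{2m+1}_t(I,L^{2(2m+1)}_x)}\le2\delta\}$, Strichartz and $\|F(u)\|_{L^1_tL^2_x}=\|u\|_{L^{2m+1}_tL^{2(2m+1)}_x}^{2m+1}$ give $\|\Phi(u)-S_{\rm L}(\cdot)\vec u_0\|_{L^{2m+1}_tL^{2(2m+1)}_x}\le C_0(2\delta)^{2m+1}$, and likewise contraction, once $\delta$ is small (this uses $2m+1>1$); the energy inequality then upgrades the solution to $L^\infty_t(I,\HHH)$. For (iv), if the Strichartz norm of $u$ is finite on $[0,T^*)$ the argument above gives $T^*=+\infty$ and $\|\nabla_{x,t}u\|_{L^\infty_tL^2_x}<\infty$, and $\vec u_+:=\lim_{t\to\infty}\overrightarrow{S_{\rm L}}(-t)\vec u(t)$ exists in $\HHH$ because the Duhamel tail $\int_T^\infty$ is Cauchy there; conversely a scattering solution is bounded in $L^\infty_t\HHH$ (its energy norm converges) and has finite Strichartz norm on $[0,\infty)$, by combining the local theory on a bounded $[0,T]$ with the finite Strichartz norm of its linear asymptote on $[T,\infty)$.

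Finally, (ii) follows by running the same fixed point in $(\dot{H}^2\cap\dot{H}^1_0)\times\dot{H}^1_0$: from $\nabla F(u)=F'(u)\nabla u$ with $|F'(u)|\lesssim|u|^{2m}$ one gets $\|\nabla F(u)\|_{L^2}\lesssim\|u\|_{L^\infty}^{2m}\|\nabla u\|_{L^2}\lesssim\|u\|_{\dot{H}^1}^{2m}\|\nabla u\|_{L^2}$, and (using $F(u)\in\dot{H}^1_0$ since $u$ vanishes on $\partial\Omega$) a Gronwall argument shows the $\dot{H}^2$ norm persists as long as the $\HHH$ norm does. Claim (iii) is obtained by proving the energy identity for $\dot{H}^2\cap\dot{H}^1_0$ data — where the boundary term in $\tfrac{d}{dt}E(\vec u)$ drops because $\partial_tu=0$ on $\partial\Omega$ — and passing to general data via the continuity of the flow. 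The statements for $t\to-\infty$ are immediate from the invariance $t\mapsto-t$ of \eqref{eq:NLW}. There is no serious obstacle in this scheme; the only slightly delicate points are the unconditional uniqueness in the energy class and the density argument for energy conservation, while the substantive input is the embedding $\dot{H}^1_{\rm rad}(\Omega)\hookrightarrow L^\infty(\Omega)$ coming from \eqref{rad_Sob}.
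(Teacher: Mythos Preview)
Your proposal is correct and follows essentially the same approach as the paper, which also uses the radial Sobolev inequality and the embedding $\dot{H}^1_{\rm rad}(\Omega)\hookrightarrow L^{2(2m+1)}(\Omega)$ to make the supercritical nonlinearity energy-subcritical, runs a fixed-point argument on the Duhamel formulation with the Strichartz pair $(2m+1,2(2m+1))$ from Corollary \ref{Co:Strichartz}, obtains (ii) by a bootstrap on the Duhamel formula, and derives (iii) from (ii) by density. The paper's argument is in fact only a sketch, and your write-up simply fills in the details it leaves implicit.
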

The proof follows mainly from a standard 
fixed point argument based on Strichartz estimates in Corollary
\ref{Co:Strichartz} and we only sketch it here.
By using energy estimate, Sobolev embedding
and radial Sobolev inequality, it is readily to solve
\eqref{eq:NLW} on an interval $[0,T]$ with
$T<\bar{C} \|\overrightarrow{u_0}\|_{\HHH}^{-2m}$ for some constant $\bar{C}>0$,
depending only on $m, C_0$ and optimal constants in Sobolev embedding.
Let $T^*$ be the maximal time of existence.
Then (i)
follows by using Strichartz estimate.
(ii) is deduced from standard bootstrap argument based on the Duhamel formula.
By using (ii) and standard density argument,
we obtain the conservation of energy (iii). 
Finally, (iv) and (v) is immediately verified by
using Strichartz estimates and energy estimate.

 We next establish
 a long-time perturbation
lemma for \eqref{eq:NLW}.
\begin{lemma}
	\label{ksmssj}
	Given $M>0$, we have  $\varepsilon_M>0$
	and $C_M>0$ with the following properties.
	Let $I$ be an interval, $t_0\in I$, and $u,\tilde{u}\in L^{2m+1}\left(I,L^{2(2m+1)}_{\rm rad}(\Omega)\right)$ such that  
	$\vec{u},\vec{\tilde{u}} \in C(I,\HHH)$ 
    and
    \begin{equation}
    \label{nhss}
    \|\tilde{u}\|_{L^{2m+1}_t\left(I,L_x^{2(2m+1)}(\Omega)\right)}\le M,
    \end{equation}
    \begin{equation}
    \label{mnasa}
    \|{\rm eq}(u)\|_{L^1_t(I,L^2_x(\Omega))}+ \|{\rm eq}(\tilde{u})\|_{L^1_t(I,L^2_x(\Omega))}+\|R_{\rm L}\|_{L^{2m+1}_t\left(I,L^{2(2m+1)}_x(\Omega)\right)}= \varepsilon,
    \end{equation}
    where $\varepsilon\le\varepsilon_M$, ${\rm eq}(u)=(\partial_t^2-\Delta)u-|u|^{2m}u$ in the sense of distribution
    and $R_{\rm L}(t)=S_{\rm L}(t-t_0)(\vec{u}(t_0)-\vec{\tilde{u}}(t_0))$.
%     
%	\begin{equation}
%	\label{eq:NLW1}
%	\begin{cases}
%	(\partial_{t}^{2}-\Delta)\tilde{u}(t,x)=|\tilde{u}|^{2m}\tilde{u}+e(t,x),\quad(t,x)\in\mathbb R\times \mathbb R^{3}\\
%	(\tilde{u},\partial_{t}\tilde{u})|_{t=0}=(\tilde{u}_{0}, \tilde{u}_{1})\in \dot H^{1}_{0}(\Omega)\times L^{2}(\Omega),
%	\tilde{u}|_{\partial \Omega}=0
%	\end{cases}
%	\end{equation}
%	on $[0,T)$,
%	where $e\in L^1_t((0,T),L^2_x(\Omega))$.
%	Let $(u_0,u_1)\in \dot{H}^1_0(\Omega)\times L^2(\Omega)$ be radial functions and
%	$
%	R_{\rm L}(t)=S_{\rm L}(t)((u_0-\tilde{u}_0,u_1-\tilde{u}_1)).
%	$
%	Assume
	Then 
	\[
	\|u-\tilde{u}\|_{L^{2m+1}_t(I,L^{2(2m+1)}_x(\Omega))}+\sup_{t\in I}\|\nabla_{x,t}(u(t)-\tilde{u}(t)-R_{\rm L}(t))\|_{L^2(\Omega)}\le C_M\varepsilon.
	\]
\end{lemma}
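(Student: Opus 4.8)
The plan is to adapt the standard long-time perturbation argument of Kenig--Merle (see \cite{KeMe08,DuKeMe13}) to the exterior domain; the only structural input from the obstacle is the Strichartz estimate of Corollary~\ref{Co:Strichartz} for the pair $q=2m+1$, $r=2(2m+1)$ (admissible since $m>2$) and the conservation of the energy of \eqref{eq:LW} by the linear flow $S_{\rm L}(t)$. Throughout write $S(J)=L^{2m+1}_t\bigl(J,L^{2(2m+1)}_x(\Omega)\bigr)$ for a subinterval $J\subset I$, and set $e=u-\tilde u$, $R_{\rm L}(t)=S_{\rm L}(t-t_0)(\vec u(t_0)-\vec{\tilde u}(t_0))$, $w=e-R_{\rm L}$. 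Since $\vec u,\vec{\tilde u}\in C(I,\HHH)$ and ${\rm eq}(u),{\rm eq}(\tilde u)\in L^1_t(I,L^2_x)$, both $u$ and $\tilde u$ satisfy Duhamel's formula; subtracting them and using that $R_{\rm L}$ solves the free equation, $w$ satisfies $\vec w(t_0)=0$ and $(\partial_t^2-\Delta)w=F$ with
$$F=|u|^{2m}u-|\tilde u|^{2m}\tilde u+{\rm eq}(u)-{\rm eq}(\tilde u).$$
By Duhamel, Strichartz and the energy estimate, the whole statement follows once we prove $\|F\|_{L^1_t(I,L^2_x)}\lesssim_M\varepsilon$.

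First I would partition $I$ into consecutive subintervals $I_0,\dots,I_{N-1}$, $I_j=[t_j,t_{j+1}]$, with $\|\tilde u\|_{S(I_j)}\le\eta_0$, where $\eta_0=\eta_0(m)$ is a small constant fixed below; since $\|\tilde u\|_{S(I)}\le M$ one may take $N\le 1+(M/\eta_0)^{2m+1}$, so $N$ depends only on $M$. On each $I_j$, the pointwise bound $\bigl||u|^{2m}u-|\tilde u|^{2m}\tilde u\bigr|\lesssim(|\tilde u|^{2m}+|e|^{2m})|e|$, Hölder in $x$ (all factors lie in $L^{2(2m+1)}_x$, using $\tfrac{m}{2m+1}+\tfrac1{2(2m+1)}=\tfrac12$) and Hölder in $t$ give
$$\bigl\||u|^{2m}u-|\tilde u|^{2m}\tilde u\bigr\|_{L^1_t(I_j,L^2_x)}\le C\bigl(\eta_0^{2m}+\|e\|_{S(I_j)}^{2m}\bigr)\|e\|_{S(I_j)}.$$

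Then I would set $a_j=\|e\|_{S(I_j)}$, $b_j=\|\nabla_{x,t}w(t_j)\|_{L^2(\Omega)}$ (so $b_0=0$) and $\epsilon_j=\|{\rm eq}(u)\|_{L^1(I_j,L^2)}+\|{\rm eq}(\tilde u)\|_{L^1(I_j,L^2)}$; by \eqref{mnasa} then $\sum_j\epsilon_j\le\varepsilon$ and $\|R_{\rm L}\|_{S(I_j)}\le\varepsilon$. From $\vec w(t)=\vec S_{\rm L}(t-t_j)\vec w(t_j)+\int_{t_j}^t\vec S_{\rm L}(t-s)(0,F(s))\,ds$ on $I_j$, Strichartz (with $\|S_{\rm L}(\cdot)(v_0,v_1)\|_{S(\R)}\le C_0\|(v_0,v_1)\|_{\HHH}$ for the first term) and the energy estimate give
$$a_j\le\varepsilon+C_0b_j+C_0\bigl(C(\eta_0^{2m}+a_j^{2m})a_j+\epsilon_j\bigr),\qquad b_{j+1}\le b_j+C(\eta_0^{2m}+a_j^{2m})a_j+\epsilon_j.$$
Fix $\eta_0$ with $2CC_0\eta_0^{2m}\le\tfrac12$. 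Since $u,\tilde u\in S(I)$, the map $\tau\mapsto\|e\|_{S([t_j,\tau])}$ is finite and continuous and vanishes at $t_j$, and a continuity/bootstrap argument on $I_j$ (on the region $a_j\le\eta_0$ one has $C(\eta_0^{2m}+a_j^{2m})a_j\le 2C\eta_0^{2m}a_j$, so the first inequality forces $a_j\le 2(\varepsilon+C_0b_j+C_0\epsilon_j)$, which stays below $\eta_0$ provided $\varepsilon$ is small in terms of $m$ and $N$ — this is where $\varepsilon_M$ enters). Feeding this into the second inequality gives the discrete Gronwall recursion $b_{j+1}\le(1+C'\eta_0^{2m})b_j+C'(\varepsilon+\epsilon_j)$, $b_0=0$, whence $a_j+b_j\le C_N\bigl(\varepsilon+\sum_i\epsilon_i\bigr)\le 2C_N\varepsilon$ with $C_N=C_N(m,M)$. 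Summing $a_j^{2m+1}$ over $j$ bounds $\|e\|_{S(I)}$, while $\sup_{t\in I_j}\|\nabla_{x,t}w(t)\|_{L^2}\le b_j+C(\eta_0^{2m}+a_j^{2m})a_j+\epsilon_j$ bounds the energy term; both are $\lesssim_M\varepsilon$, which is the conclusion, with $\varepsilon_M$ and $C_M$ read off from this process.

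The only point needing genuine care is this finite induction: one runs the continuity argument separately on each $I_j$, passing the endpoint energy $b_{j+1}=\|\nabla_{x,t}w(t_{j+1})\|_{L^2}$ to the next step, and must track how the admissible size of $\varepsilon$ and the final constant degrade with $N$, hence with $M$. This is routine bookkeeping rather than a real obstacle: nothing about the exterior domain enters beyond Corollary~\ref{Co:Strichartz} and the conservation of the linear energy, which have exactly the same form as on $\R^3$ (and the same scheme applies to the defocusing equation, cf.\ Remark~\ref{R:defocusing}).
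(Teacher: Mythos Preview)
Your argument is correct and complete. The main difference from the paper's proof is methodological: you run the classical Kenig--Merle scheme of partitioning $I$ into finitely many subintervals on which $\|\tilde u\|_{S(I_j)}\le\eta_0$, close a bootstrap on each piece, and propagate the endpoint energy $b_j$ by a discrete Gr\"onwall recursion; the paper instead avoids the partition altogether by invoking a continuous Gr\"onwall-type inequality (Lemma~\ref{L:mnb}, from \cite{FaXiCa11}) applied with $\varphi(t)=\|w(t)\|_{L^{2(2m+1)}_x}$, $f(t)=\|\tilde u(t)\|_{L^{2(2m+1)}_x}^{2m}$, $\beta=1$, $\gamma=2m+1$, then combines this with a single global bootstrap on $\|w\|_{S([0,\theta])}$. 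Both routes produce constants $C_M$, $\varepsilon_M$ of the same nature (explicit in $M$ and $m$); the paper's is slightly shorter on the page because the iteration is hidden inside Lemma~\ref{L:mnb}, while yours is more self-contained and makes the dependence of the constants on $N=N(M)$ transparent. Either way the only inputs are Corollary~\ref{Co:Strichartz} and the energy conservation of $S_{\rm L}$, exactly as you note.
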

For the proof we will need the following Gr\"onwall-type lemma (see \cite{FaXiCa11})
\begin{lemma}
	\label{L:mnb}
	Let $1\leq \beta<\gamma\leq \infty$ and define $\rho\in [1,\infty)$ by $\frac{1}{\rho}=\frac{1}{\beta}-\frac{1}{\gamma}$.  Let $0<T\leq \infty$, $f\in L^{\rho}(0,T)$ and $\varphi\in L^{\gamma}_{\rm loc}([0,T))$ such that 
	$$\forall t\in [0,T),\quad \|\varphi\|_{L^{\gamma}(0,t)}\leq \eta+\|f\varphi\|_{L^{\beta}(0,t)}.$$
Then
$$\forall t\in [0,T), \quad \|\varphi\|_{L^{\gamma}(0,t)}\leq \eta\Phi\left(\|f\|_{L^{\rho}(0,t)}\right),$$
where $\Phi(s)=2\Gamma(3+2s)$ and $\Gamma$ is the Gamma function.
\end{lemma}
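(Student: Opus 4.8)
The plan is to localize in time: on any subinterval where $\|f\|_{L^\rho}$ is below a fixed threshold, the hypothesis can be "solved" for $\|\varphi\|_{L^\gamma}$ by absorption, and the general bound then follows by iterating this local estimate over a partition of $[0,t)$, the number of pieces being controlled by $\|f\|_{L^\rho(0,t)}$.

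First I would dispose of the endpoint case $\gamma=\infty$, in which $\rho=\beta$: using $|\varphi(s)|\le\|\varphi\|_{L^\infty(0,s)}$, raising the hypothesis to the power $\beta$ and invoking $(a+b)^\beta\le 2^{\beta-1}(a^\beta+b^\beta)$ reduces the inequality to a linear Gr\"onwall inequality for the non-decreasing function $\sigma\mapsto\|\varphi\|_{L^\infty(0,\sigma)}^\beta$ with integrable kernel $|f|^\beta$, which is solved by an exponential of $\int_0^\sigma|f|^\beta$; this is a bound of the desired type in $\|f\|_{L^\rho(0,t)}$. So I assume $\gamma<\infty$ and fix $t\in[0,T)$. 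Set $\psi(\sigma)=\|\varphi\|_{L^\gamma(0,\sigma)}$, which is non-decreasing, and observe that $\sigma\mapsto\|f\|_{L^\rho(0,\sigma)}$ is continuous and non-decreasing.

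Next, given a small absolute constant $\delta\in(0,1/2]$, I partition $[0,t]$ into consecutive intervals $I_1,\dots,I_N$ with $\|f\|_{L^\rho(I_j)}=\delta$ for $j<N$ and $\|f\|_{L^\rho(I_N)}\le\delta$ (possible by continuity), so that $N\le \delta^{-\rho}\|f\|_{L^\rho(0,t)}^{\rho}+1$. Writing $a_j=\|\varphi\|_{L^\gamma(0,t_j)}$ and $b_j=\|\varphi\|_{L^\gamma(I_j)}$, one has $b_j\le a_j$ and $\sum_{i\le j}b_i^\gamma=a_j^\gamma$; applying the hypothesis at $\sigma=t_j$, splitting the $L^\beta$-norm over the $I_i$, and using H\"older with $\frac1\beta=\frac1\rho+\frac1\gamma$ on each piece together with the triangle inequality in $\ell^\beta$,
\[
a_j\le \eta+\Bigl(\sum_{i\le j}\|f\|_{L^\rho(I_i)}^{\beta}\,b_i^{\beta}\Bigr)^{1/\beta}\le \eta+\delta\Bigl(\sum_{i<j}a_i^{\beta}\Bigr)^{1/\beta}+\delta\,a_j .
\]
Since $\delta\le\frac12$ the last term is absorbed, giving $a_j\le 2\eta+2\delta\bigl(\sum_{i<j}a_i^{\beta}\bigr)^{1/\beta}$, and an immediate induction produces $a_j\le C_j\,\eta$ with a comparison sequence $C_j$ that grows at most geometrically in $j$ (one checks that $S_j:=\sum_{i\le j}C_i^\beta$ satisfies $S_j\le(1+c\delta^\beta)S_{j-1}+c'$). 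Combining this with the bound on $N$ yields $\psi(t)=a_N\le\eta\,\Phi\bigl(\|f\|_{L^\rho(0,t)}\bigr)$ for an explicit non-decreasing function $\Phi$, and a careful choice of the subdivision and of the majorizing sequence, as carried out in \cite{FaXiCa11}, shows that $\Phi(s)=2\Gamma(3+2s)$ is an admissible choice.

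The main obstacle is precisely this last bookkeeping step: the crude estimates above only give a constant exponential in $\|f\|_{L^\rho(0,t)}^{\rho}$, and extracting the clean form $\Phi(s)=2\Gamma(3+2s)$ requires optimizing the sizes of the subintervals and the choice of the comparison sequence $C_j$; this is the elementary but delicate part of the argument, for which I would simply refer to \cite{FaXiCa11}.
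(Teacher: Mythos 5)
First, note that the paper itself does not prove Lemma \ref{L:mnb}: it is imported verbatim with the citation \cite{FaXiCa11}, so there is no in-paper argument to compare against. Judged on its own terms, the partition--absorption--iteration scheme you describe is the natural one, and the estimates you actually carry out are correct: H\"older on each subinterval with $\frac1\beta=\frac1\rho+\frac1\gamma$, absorption of the $i=j$ term when $\delta\le\frac12$ (legitimate since $\varphi\in L^\gamma_{\rm loc}$ makes each $a_j$ finite), and the geometric growth of $S_j=\sum_{i\le j}C_i^\beta$ all go through, as does the linear Gr\"onwall reduction at the endpoint $\gamma=\infty$. What this yields is a bound $\|\varphi\|_{L^\gamma(0,t)}\le \eta\,\Phi_{\beta,\gamma}\bigl(\|f\|_{L^\rho(0,t)}\bigr)$ with $\Phi_{\beta,\gamma}(s)\lesssim\exp\bigl(C_{\beta,\gamma}\,s^{\rho}\bigr)$, and since the only use of the lemma in the paper (the proof of Lemma \ref{ksmssj}) has fixed exponents and needs nothing more than some increasing $\Phi$, what you prove suffices for the application.

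As a proof of the statement as written, however, there is a genuine gap, and it is not mere bookkeeping. Your method cannot produce the universal $\Phi(s)=2\Gamma(3+2s)$: the number of subintervals on which $\|f\|_{L^\rho}\le\delta$ is of order $\delta^{-\rho}\|f\|_{L^\rho}^{\rho}$, so any output of the scheme is a function of $\|f\|_{L^\rho}^{\rho}$ with constants depending on the exponents, never a function of $\|f\|_{L^\rho}$ alone. In fact no argument can give an exponent-independent $\Phi$ of this kind: take $\gamma=\infty$ (so $\rho=\beta$), $\eta=1$, $f\equiv1$ and $\varphi(t)=e^{t/(2\beta)}$ on $(0,\beta^2)$; one checks directly that $\|\varphi\|_{L^\infty(0,t)}\le 1+\|f\varphi\|_{L^\beta(0,t)}$ for every $t$, while $\|\varphi\|_{L^\infty(0,t)}$ reaches $e^{\beta/2}$ and $2\Gamma\bigl(3+2\|f\|_{L^\beta(0,t)}\bigr)\to 2\Gamma(5)$ as $\beta\to\infty$. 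So the conclusion must be allowed to depend on $(\beta,\gamma)$ (equivalently, the natural argument of $\Phi$ is $\|f\|_{L^\rho}^{\rho}$), which is exactly the shape your computation delivers; the final sentence of your proposal therefore defers to \cite{FaXiCa11} a claim that your argument cannot reach and which, at least at this endpoint, is too strong as literally stated. The clean repair is to prove (as you essentially do) and then use the exponent-dependent version, or to quote the precise statement of \cite{FaXiCa11}, rather than to assert that careful bookkeeping recovers $\Phi(s)=2\Gamma(3+2s)$.
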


\begin{proof}[Proof of Lemma \ref{ksmssj}]
	Let $w:=u-\tilde{u}$. Then we have
	\[
	(\partial_t^2-\Delta)w=\underbrace{{\rm eq}(u)-{\rm eq}(\tilde{u})}_{:=e}+|u|^{2m}u-|\tilde{u}|^{2m}\tilde{u}.
	\]
	We assume to fix ideas $t_0=0$ and $I=[0,T)$.
By Duhamel's formula
	\begin{equation}
	\label{w_Duhamel}   
	w(t)=R_L(t)+\int_{0}^{t}\frac{\sin(t-s)\sqrt{-\Delta}}{\sqrt{-\Delta}}\left(e+(\tilde{u}+w)^{2m+1}-\tilde{u}^{2m+1}\right)(s)ds.
	  \end{equation} 
	Using Strichartz \eqref{nsscbs} and H\"older inequalities, we deduce
\begin{multline*}
 \forall t\in [0,T), \quad \|w\|_{L^{2m+1}((0,t)L^{2(2m+1)}(\Omega))}\leq \\
 \|R_L\|_{L^{2m+1}(0,t,L^{2(2m+1)}(\Omega))}
 +C_0\|e\|_{L^1((0,t),L^2(\Omega))}\\
 +C_0\int_0^t \left(\|w(\tau)\|_{L^{2(2m+1)}}\|\tilde{u}(\tau)\|^{2m}_{L^{2(2m+1)}(\Omega)}+\|w(\tau)\|^{2m+1}_{L^{2(2m+1)}(\Omega)}\right)d\tau.
\end{multline*}
From \eqref{mnasa}, we obtain
\begin{multline*}
 \forall t\in [0,T), \quad \|w\|_{L^{2m+1}((0,t)L^{2(2m+1)}(\Omega))}\leq \\
 (1+2C_0)\eps 
 +C_0\int_0^t \left(\|w(\tau)\|_{L^{2(2m+1)}}\|\tilde{u}(\tau)\|^{2m}_{L^{2(2m+1)}(\Omega)}+\|w(\tau)\|^{2m+1}_{L^{2(2m+1)}(\Omega)}\right)d\tau.
\end{multline*}
Let $\theta$ such that 
$$\|w\|_{L^{2m+1}((0,\theta)L^{2(2m+1)}(\Omega))}\leq C_M \eps$$
($C_M$ to be specified). Then
\begin{multline*}
 \forall t\in [0,\theta), \quad \|w\|_{L^{2m+1}((0,t)L^{2(2m+1)}(\Omega))}\leq \\
 (1+2C_0)\eps +C_0C_M^{2m+1}\eps^{2m+1}
 +C_0\int_0^t \left(\|w(\tau)\|_{L^{2(2m+1)}}\|\tilde{u}(\tau)\|^{2m}_{L^{2(2m+1)}(\Omega)}\right)d\tau\\
 \leq (2+2C_0)\eps +C_0\int_0^t \left(\|w(\tau)\|_{L^{2(2m+1)}(\Omega)}\|\tilde{u}(\tau)\|^{2m}_{L^{2(2m+1)}(\Omega)}\right)d\tau,
\end{multline*}
provided $C_0C_M^{2m+1}\eps^{2m}\leq 1$ (which holds if $\eps\leq \eps_M=1/C_0^{\frac{1}{m}}C_M^{2+\frac{1}{m}}$).

Using Lemma \ref{L:mnb} with
\begin{gather*}
\varphi(t)=\|w(t)\|_{L^{2(2m+1)}(\Omega)}, \quad f(t)=\|\tilde{u}(t)\|_{L^{2(2m+1)}(\Omega)}^{2m}\\
\beta=1,\quad \gamma=2m+1,\quad \rho=\frac{2m+1}{2m},
\end{gather*}
we obtain 
$$  \|w(t)\|_{L^{2m+1}\left((0,\theta),L^{2(2m+1)}(\Omega)\right)} 
\leq (2+2C_0)\Phi\left(C_0 M^{2m}  \right)\eps.
$$
Choosing $C_M>(2+2C_0)\Phi\left(C_0 M^{2m}  \right)$, we obtain by a simple bootstrap argument 
$$  \|w(t)\|_{L^{2m+1}\left((0,T),L^{2(2m+1)}(\Omega)\right)} 
\leq C_M\eps.
$$
The bound of $\|\nabla_{t,x}(u-\tilde{u}-R_L)\|_{L^2(\Omega)}$ follows from Strichartz estimates and the equality \eqref{w_Duhamel}.
\end{proof}
\begin{definition}
Let $\Sigma^+_{\rm rad}$ be the set of radial functions $(u_0,u_1)\in \HHH$
such that if $u$ is the solution of \eqref{eq:NLW}
with initial data $(u_0,u_1)$,
then $u(t,x)$ exists on $[0,+\infty)$ and scatters to a linear wave. We define $\Sigma_{\rm rad}^{-}$ similarly for the negative time direction.
\end{definition}
The following proposition is an immediate consequence of Lemma \ref{ksmssj} and the characterization of scattering from Proposition \ref{pooiu}:
\begin{proposition}
	\label{hdnsa}
	 $\Sigma_{\rm rad}^+$ and $\Sigma_{\rm rad}^-$ are open.
\end{proposition}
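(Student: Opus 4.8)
The plan is to deduce openness of $\Sigma_{\rm rad}^+$ (the argument for $\Sigma_{\rm rad}^-$ being identical after reversing time) directly from the long-time perturbation Lemma \ref{ksmssj} together with the scattering characterization (iv) of Proposition \ref{pooiu}. So fix $(u_0,u_1)\in\Sigma_{\rm rad}^+$ and let $u$ be the corresponding solution, which is global on $[0,\infty)$ and scatters. By (iv), setting $M:=\|u\|_{L^{2m+1}_t([0,\infty),L^{2(2m+1)}_x(\Omega))}<\infty$, and this will be the fixed bound governing the perturbation estimate.

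The key step is to take $\varepsilon_M,C_M$ from Lemma \ref{ksmssj} applied with $I=[0,\infty)$, $t_0=0$, and $\tilde u$ the reference solution $u$. Given data $(v_0,v_1)\in\HHH$ close to $(u_0,u_1)$, let $v$ be the corresponding maximal solution on $[0,T^*)$ with $T^*=T^*(v_0,v_1)$. One applies Lemma \ref{ksmssj} on every sub-interval $I'=[0,T)\subset[0,T^*)$: since ${\rm eq}(u)={\rm eq}(v)=0$ in the sense of distributions, the left-hand side of \eqref{mnasa} reduces to $\|R_{\rm L}\|_{L^{2m+1}_t(I',L^{2(2m+1)}_x(\Omega))}$, and by the Strichartz estimate \eqref{nsscbs} (with $F=0$) this is bounded by $C_0\|(v_0,v_1)-(u_0,u_1)\|_{\HHH}$, independently of $T$. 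Hence if $\|(v_0,v_1)-(u_0,u_1)\|_{\HHH}\le \varepsilon_M/C_0$, Lemma \ref{ksmssj} gives, uniformly in $T<T^*$,
\[
\|v-u\|_{L^{2m+1}_t([0,T),L^{2(2m+1)}_x(\Omega))}\le C_M\,C_0\,\|(v_0,v_1)-(u_0,u_1)\|_{\HHH}.
\]
By the triangle inequality $\|v\|_{L^{2m+1}_t([0,T),L^{2(2m+1)}_x(\Omega))}\le M+C_MC_0\varepsilon_M/C_0$ for all $T<T^*$; by the blow-up criterion \eqref{eq:nrm-contrl} this forces $T^*=+\infty$, and letting $T\to\infty$ shows $\|v\|_{L^{2m+1}_t([0,\infty),L^{2(2m+1)}_x(\Omega))}<\infty$, so $v$ scatters forward by (iv). Thus a whole $\HHH$-ball around $(u_0,u_1)$ lies in $\Sigma_{\rm rad}^+$.

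The only mild subtlety — and the one point that needs care rather than being truly hard — is the bookkeeping that makes the perturbation estimate uniform in $T$: one must know a priori that $v$ exists on $[0,T)$ before invoking Lemma \ref{ksmssj} there. This is handled by a standard continuity (bootstrap) argument in $T$: the set of $T\in[0,T^*)$ for which the above $L^{2m+1}L^{2(2m+1)}$ bound with constant $C_MC_0$ holds is nonempty (small $T$, by local theory and continuity of the flow from Proposition \ref{pooiu}(i)), closed by Fatou, and open because the a priori bound leaves room to improve the constant; hence it is all of $[0,T^*)$, and the argument above then upgrades $T^*$ to $+\infty$. I would spell this out in one or two sentences and otherwise keep the proof short, exactly as the authors indicate.

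\begin{proof}[Proof of Proposition \ref{hdnsa}]
We prove that $\Sigma_{\rm rad}^+$ is open; the argument for $\Sigma_{\rm rad}^-$ is identical, reversing the direction of time. Let $(u_0,u_1)\in\Sigma_{\rm rad}^+$ and let $u$ be the corresponding solution of \eqref{eq:NLW}, which by definition is global on $[0,\infty)$ and scatters. By Proposition \ref{pooiu} (iv),
\[
M:=\|u\|_{L^{2m+1}_t\left([0,\infty),L^{2(2m+1)}_x(\Omega)\right)}<\infty.
\]
Let $\varepsilon_M,C_M$ be the constants furnished by Lemma \ref{ksmssj} for this value of $M$, and let $C_0$ be the Strichartz constant in \eqref{nsscbs}. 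Set
\[
\eta:=\min\left\{\frac{\varepsilon_M}{C_0},\;\frac{\varepsilon_M}{C_0}\right\}=\frac{\varepsilon_M}{C_0},
\]
and let $(v_0,v_1)\in\HHH$ with $\|(v_0,v_1)-(u_0,u_1)\|_{\HHH}\le\eta$. Let $v$ be the corresponding maximal radial solution on $[0,T^*)$.

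For any $T\in(0,T^*)$ we apply Lemma \ref{ksmssj} on the interval $I=[0,T)$ with $t_0=0$, reference solution $\tilde u:=u$ (which satisfies \eqref{nhss} since its $L^{2m+1}_tL^{2(2m+1)}_x$ norm over $[0,T)$ is at most $M$), and $u:=v$. Since ${\rm eq}(u)={\rm eq}(v)=0$ in the sense of distributions, the left-hand side of \eqref{mnasa} reduces to
\[
\|R_{\rm L}\|_{L^{2m+1}_t\left([0,T),L^{2(2m+1)}_x(\Omega)\right)},\qquad R_{\rm L}(t)=S_{\rm L}(t)\bigl((v_0,v_1)-(u_0,u_1)\bigr),
\]
which by the Strichartz estimate \eqref{nsscbs} with $F=0$ is bounded by $C_0\|(v_0,v_1)-(u_0,u_1)\|_{\HHH}\le C_0\eta=\varepsilon_M$, uniformly in $T$. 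Hence the hypotheses of Lemma \ref{ksmssj} hold with $\varepsilon=C_0\|(v_0,v_1)-(u_0,u_1)\|_{\HHH}\le\varepsilon_M$, and we obtain, for every $T\in(0,T^*)$,
\[
\|v-u\|_{L^{2m+1}_t\left([0,T),L^{2(2m+1)}_x(\Omega)\right)}\le C_M\,C_0\,\|(v_0,v_1)-(u_0,u_1)\|_{\HHH}.
\]
By the triangle inequality,
\[
\|v\|_{L^{2m+1}_t\left([0,T),L^{2(2m+1)}_x(\Omega)\right)}\le M+C_MC_0\eta\qquad\text{for all }T\in(0,T^*).
\]
If $T^*<+\infty$, this contradicts the blow-up criterion \eqref{eq:nrm-contrl}; hence $T^*=+\infty$, and letting $T\to+\infty$ gives $\|v\|_{L^{2m+1}_t([0,\infty),L^{2(2m+1)}_x(\Omega))}<\infty$. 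By Proposition \ref{pooiu} (iv), $v$ is global and scatters in the forward time direction, i.e.\ $(v_0,v_1)\in\Sigma_{\rm rad}^+$. Thus the $\HHH$-ball of radius $\eta$ centered at $(u_0,u_1)$ is contained in $\Sigma_{\rm rad}^+$, which proves that $\Sigma_{\rm rad}^+$ is open.
\end{proof}
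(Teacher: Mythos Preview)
Your proof is correct and is exactly the argument the paper has in mind: the authors simply state that the proposition is ``an immediate consequence of Lemma \ref{ksmssj} and the characterization of scattering from Proposition \ref{pooiu}'', and you have written out precisely those details. Two cosmetic remarks: the Strichartz bound on $R_{\rm L}$ in the pair $(2m+1,2(2m+1))$ is justified via Corollary \ref{Co:Strichartz} (the radial extension) rather than Proposition \ref{P:Stricharz} directly, and your definition of $\eta$ as a minimum of two identical quantities is redundant.
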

\subsection{Profile decomposition}
\label{SS:profile}
We prove here that there exists a profile decomposition which is adapted to the Strichartz norm used in the scattering theory of equation \eqref{eq:NLW}.
\begin{proposition}
 \label{P:profile}
 Let $(u_n)_n$ be a sequence of radial solutions of the linear wave equation outside the ball \eqref{eq:LW} such that $(\vec{u}_n(0))_n$ is bounded in $\HHH$. Then there exists a subsequence of $(u_n)_n$ (that we still denote by $(u_n)_n$), and, for any integer $j\geq 1$, a solution $U^j_L$ of \eqref{eq:LW} and a sequence $(t_{j,n})_n\in \R^{\N}$ satisfying
 $$
 j\neq j'\Longrightarrow\lim_{n\to\infty}
 \bigl| t_{j,n}-t_{j',n}\bigr|=+\infty,
 $$
  such that, letting, for $J\geq 1$,
 $$w_n^J(t)=u_n(t)-\sum_{j=1}^J U^j_L(t-t_{j,n}),$$
 we have, for all $(q,r)\in (2,\infty]\times (6,\infty)$ such that $\frac{1}{q}+\frac{3}{r}<\frac{1}{2}$,
 \begin{equation}
  \label{dispersive_part}
  \lim_{J\to\infty}\limsup_{n\to\infty}\|w_n^J\|_{L^q_tL^r_x}=0.
 \end{equation} 
 Furthermore,
 \begin{gather}
  \label{w-lim-profile}
  \forall j\geq 1,\quad \vec{u}_n(t_{j,n})\xrightharpoonup[n\to\infty]{} \vec{U}_L^j(0)\\
 \label{Pythagore}
 \forall J\geq 1, \quad \lim_{n\to\infty}\left\|\vec{u}_n(0)\right\|^2_{\SF}-\sum_{j=1}^J \left\|\vec{U}^j_L(0)\right\|^2_{\SF}-\left\|\vec{w}_n^J(0)\right\|_{\SF}^2=0.
 \end{gather} 
\end{proposition}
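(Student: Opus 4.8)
The plan is to follow the now-standard Bahouri--Gérard profile decomposition strategy, adapted to the exterior domain $\Omega$ and to the subcritical Strichartz norms $L^q_tL^r_x$ with $\frac1q+\frac3r<\frac12$. The key new input that makes everything work in this geometry is Lemma \ref{L:asymptotic}: it provides, for every radial finite-energy linear solution, asymptotic profiles $G_\pm\in L^2(\R)$ for $r\partial_{t,r}u$, together with the isometry \eqref{enG+-}, and the fact that the maps $(u_0,u_1)\mapsto G_\pm$ are bijective. Thus the space $\HHH$ of radial data is, via the flow of \eqref{eq:LW}, isometrically identified with $L^2(\R)$ (at each end), which plays the role that $\dot H^1\times L^2$ on $\R^3$ plays in the non-obstacle theory.

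First I would set up the extraction algorithm. Let $\nu$ denote the $L^2$-weak limit functional applied to the outgoing profiles: more precisely, define on the bounded sequence $(\vec u_n(0))_n$ the quantity $\Lambda\bigl((v_n)_n\bigr)=\limsup_n \|v\|$ where $v$ is the corresponding solution, measured in a norm equivalent to $\sup_{t}\|\vec v(t)\|_{\SF}$ but chosen so that a nonzero value forces a nonzero weak limit along some time sequence; concretely one uses the $L^\infty_tL^r_x$ (or $L^q_tL^r_x$) norm itself. If $\limsup_n\|u_n\|_{L^q_tL^r_x}=0$ for all admissible $(q,r)$ we are done with no profiles. Otherwise, using the dispersive estimate $\|S_{\rm L}(t)(w_0,w_1)\|_{L^\infty_x}\lesssim |t|^{-1}\|(w_0,w_1)\|$ coming from \eqref{value_psi} (this is exactly the $r$-translation structure $ru=\psi(t+r)-\psi(t+2-r)$ reflected in Lemma \ref{L:asymptotic}, which gives decay of $L^6$ and $L^\infty$ norms), one shows there is a sequence $(t_{1,n})_n$ and a nonzero weak limit $\vec U^1_L(0)=\text{w-}\lim_n \vec u_n(t_{1,n})$; this defines $U^1_L$. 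Replace $u_n$ by $w^1_n=u_n-U^1_L(\cdot-t_{1,n})$ and iterate. At each stage the orthogonality $|t_{j,n}-t_{j',n}|\to\infty$ for $j\neq j'$ is forced because, if two parameters stayed at bounded distance, the corresponding weak limits would be extracted simultaneously and we could have combined the profiles — the standard argument. The Pythagorean expansion \eqref{Pythagore} follows from the weak convergence \eqref{w-lim-profile} together with the pairwise orthogonality: expand $\|\vec u_n(0)\|^2_{\SF}$, use that cross terms $\langle \vec U^j_L(-t_{j,n}),\vec U^{j'}_L(-t_{j',n})\rangle\to 0$ (again the $|t_{j,n}-t_{j',n}|\to\infty$ orthogonality, via the dispersive decay), and that $\langle \vec w^J_n(0),\vec U^j_L(-t_{j,n})\rangle\to 0$ by construction of the weak limit. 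A useful bookkeeping device here is to conjugate by the linear flow and work with the $G_\pm$ representations from Lemma \ref{L:asymptotic}: in that picture orthogonality of profiles becomes disjointness (up to $o(1)$) of supports after the shifts $t_{j,n}$, and \eqref{Pythagore} is just $L^2(\R)$ Pythagoras.

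It remains to prove the vanishing of the remainder, \eqref{dispersive_part}, which I expect to be the main obstacle. By construction, the iteration stops improving: one gets that $\Lambda(w^J_n)\to 0$ as $J\to\infty$ in the sense that any further weak limit along any time sequence is $0$. The task is to upgrade this to smallness of a genuine space-time norm $L^q_tL^r_x$. On $\R^3$ without obstacle this is the classical ``improved Sobolev'' / interpolation argument: one interpolates the $L^q_tL^r_x$ norm between a fixed Strichartz norm (bounded uniformly in $J,n$ by \eqref{Pythagore}) and an $L^\infty_{t,x}$-type norm that is controlled by the absence of a nonzero profile, using the explicit dispersive kernel. Here the same scheme applies but one must use the exterior dispersive bound derived from \eqref{exp_u_lin}--\eqref{value_psi}: the $L^\infty_x$ norm of a linear solution decays like $1/t$ for $t$ bounded away from the characteristic cone reflected off the obstacle, and the radial Sobolev inequality \eqref{rad_Sob} controls it near the cone and near $\partial\Omega$. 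Concretely, I would show $\limsup_n\|w^J_n\|_{L^\infty_tL^r_x}\to 0$ for some $r>6$ first — this is where the absence of further profiles is used, by a contradiction/extraction argument: if this $L^\infty_tL^r_x$ norm did not tend to $0$, we could pick times $s_n$ with $\|w^J_n(s_n)\|_{L^r_x}\gtrsim 1$, and then extract (from the precompactness of bounded sequences in $\SF$ in the weak topology, plus the compact embedding $\dot H^1_{\rm rad}\hookrightarrow L^r$ for $r>6$ noted after \eqref{rad_Sob}) a nonzero weak limit, contradicting maximality of the profile family. Then interpolating this with the uniform bound in a Strichartz norm $L^{q_0}_tL^{r_0}_x$ (admissible, finite $r_0$, from Corollary \ref{Co:Strichartz}) gives \eqref{dispersive_part} for the stated range of $(q,r)$. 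The delicate points are: making the contradiction extraction compatible with the already-extracted profiles (one must check the new profile is orthogonal to all previous $t_{j,n}$, which again follows from $w^J_n$ having been purged of those profiles); and handling the fact that the obstacle breaks the scaling and translation symmetries, so the only modulation parameter is the time translation $t_{j,n}$ — this actually simplifies matters compared to \cite{DuKeMe13}, since there are no scaling parameters $\lambda_{j,n}$ and no space translations to track. I would also record, as in the non-obstacle case, that the above shows $\lim_{J\to\infty}\limsup_n\|\,|w^J_n|^{2m}w^J_n\|_{L^1_tL^2_x}=0$, i.e. the nonlinear interaction of the remainder is negligible; this is an immediate consequence of \eqref{dispersive_part} with $(q,r)=(2m+1,2(2m+1))$ together with Hölder and the uniform Strichartz bound, and it is what makes the decomposition usable in the proof of Theorem \ref{T:classification}.
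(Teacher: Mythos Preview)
Your proposal is correct and follows essentially the same approach as the paper. The paper isolates the key step as a separate lemma---if $\vec u_n(t_n)\rightharpoonup 0$ in $\HHH$ for \emph{every} time sequence $(t_n)$, then $\|u_n\|_{L^q_tL^r_x}\to 0$, proved exactly by your interpolation-plus-compact-embedding contradiction (H\"older between a genuine Strichartz pair $(q_0,r_0)$ and $L^\infty_tL^{r_1}_x$, then use $\dot H^1_{\rm rad}(\Omega)\hookrightarrow L^{r_1}(\Omega)$ compactly)---and defers the extraction algorithm and Pythagorean expansion to the literature as standard; your detour through the $G_\pm$ representation of Lemma~\ref{L:asymptotic} and the explicit $1/t$ dispersive decay is not needed.
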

Proposition \ref{P:profile} is a consequence of the following Lemma:
\begin{lemma}
\label{L:profile}
 Let $(u_n)_n$ be a sequence of radial solutions of the linear wave equation on $\Omega$ \eqref{eq:LW} such that for all sequence $(t_n)_n\in \R^{\N}$, 
 $$\vec{u}_n(t_n)\xrightharpoonup[n\to\infty]{} (0,0) \text{ in }\HHH.$$
 Then for all $(q,r)\in (2,\infty]\times (6,\infty)$ such that $\frac{1}{q}+\frac{3}{r}<\frac{1}{2}$
 $$\lim_{n\to\infty} \|u_n\|_{L^q_tL^r_x}=0.$$
\end{lemma}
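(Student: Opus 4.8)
The plan is to prove Lemma \ref{L:profile} directly using the explicit representation of radial linear waves outside the ball. Recall from \eqref{exp_u_lin} that $ru_n(t,r)=\psi_n(t+r)-\psi_n(t+2-r)$ for some $\psi_n\in C^2$, and from Lemma \ref{L:asymptotic} that the incoming/outgoing profiles $G_{\pm,n}=\psi_n'(2-\cdot),\psi_n'(\cdot)$ lie in $L^2(\R)$ with $\|G_{\pm,n}\|_{L^2}^2$ comparable to $\|\vec u_n(0)\|_{\SF}^2$, hence bounded. After extracting a subsequence we may assume $G_{+,n}\rightharpoonup G_+$ and $G_{-,n}\rightharpoonup G_-$ weakly in $L^2(\R)$. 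The first step is to show that the weak-limit hypothesis on $(u_n)$ forces $G_\pm=0$: if not, one can read off a sequence of times $t_n$ (shifting to center the mass of $G_{\pm,n}$ away from the boundary region $\{\sigma\le 1\}$, using that the profile moves at unit speed) along which $\vec u_n(t_n)$ converges weakly to a nonzero element of $\HHH$, via the surjectivity/continuity part of Lemma \ref{L:asymptotic}. This contradicts the hypothesis, so $G_{\pm,n}\rightharpoonup 0$.

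The second step is to upgrade ``$G_{\pm,n}\rightharpoonup 0$ in $L^2(\R)$'' to ``$\|u_n\|_{L^q_tL^r_x(\Omega)}\to 0$'' for the sub-Strichartz range $\frac1q+\frac3r<\frac12$, $r>6$, $q>2$. The key point is that this range is \emph{strictly} below the Strichartz line, so there is room to interpolate between a uniform energy bound and a gain coming from local compactness. Concretely, I would split $\Omega$ into the bounded region $\{1\le|x|\le A\}$ and the far region $\{|x|>A\}$. On the far region, using $ru_n=\psi_n(t+r)-\psi_n(t+2-r)$ and changing variables, the $L^q_tL^r_x$ norm over $|x|>A$ is controlled (after using $r>6$, so that the weight $r^{-1}$ from $u_n=(ru_n)/r$ is integrated favorably and one can pass to the profile variables $\eta=r\mp t$) by a quantity that tends to $0$ as $A\to\infty$ uniformly in $n$, because $\|G_{\pm,n}\|_{L^2}$ is bounded and the tails of an $L^2$ function's contribution are small once the weight $r^{-1}$ is taken into account. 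On the bounded region $\{1\le|x|\le A\}$, for fixed $A$, the radial Sobolev inequality \eqref{rad_Sob} and the embedding $\dot H^1_{\rm rad}\hookrightarrow L^p$ ($p>6$) with compact embedding, combined with the fact that $\vec u_n(t)\rightharpoonup 0$ in $\HHH$ for each fixed $t$ (which follows from $G_{\pm,n}\rightharpoonup 0$ and the formulas in Lemma \ref{L:asymptotic}), give pointwise-in-$t$ decay of $\|u_n(t)\|_{L^r(1\le|x|\le A)}$; then dominated convergence in $t$ (dominated by the uniform energy bound, which is integrable-in-$t^q$ only after noting $q>2$ and using a small extra gain, or more carefully by splitting the time integral into a large compact piece and its complement handled by the far-region/dispersive estimate) yields that this term also tends to $0$.

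Finally, the deduction of Proposition \ref{P:profile} from Lemma \ref{L:profile} is the standard Bahouri–Gérard extraction: iteratively extract profiles $U^j_L$ and time parameters $t_{j,n}$ maximizing the weak limits of the remainders $w_n^{J}(t_{j,n}+t)$ in $\HHH$, enforce pseudo-orthogonality $|t_{j,n}-t_{j',n}|\to\infty$ by the usual argument, and apply Lemma \ref{L:profile} to the ``residual'' sequence to conclude \eqref{dispersive_part}; the Pythagorean expansion \eqref{Pythagore} comes from the weak convergence \eqref{w-lim-profile} together with the pseudo-orthogonality of the time translations, exactly as in \cite{DuKeMe11a,DuKeMe13}. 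I would present this last part briefly since it is routine.

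I expect the main obstacle to be the second step — converting weak convergence of the scattering profiles into strong decay of the \emph{space-time} Strichartz norm. The subtlety is that weak convergence in $L^2(\R)$ of $G_{\pm,n}$ does not by itself give decay of $\|u_n\|_{L^q_tL^r_x}$ on the whole of $\Omega$; one genuinely needs the strict inequality $\frac1q+\frac3r<\frac12$ to create an $\epsilon$ of room, and one must carefully handle the interplay between the unbounded time integration and the unbounded spatial region near infinity. Getting the far-region estimate uniform in $n$ (so that the bounded-region argument, where compactness lives, can finish the job) is where the real work is, and it is exactly here that the radial structure and the $1/r$ decay of $ru_n$ in the profile variables must be exploited.
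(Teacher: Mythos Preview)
Your Step 2 has a genuine gap, and it is precisely where you anticipate trouble. The claim that the $L^q_tL^r_x$ norm of $u_n$ over $\{|x|>A\}$ tends to $0$ as $A\to\infty$ \emph{uniformly in $n$} is false in general. The radial Sobolev bound $|u_n(t,\rho)|\lesssim \rho^{-1/2}\|\vec u_n(0)\|_{\HHH}$ does give $\|u_n(t)\|_{L^r_x(|x|>A)}\lesssim A^{3/r-1/2}$ uniformly in $t$ and $n$, but this is a constant-in-$t$ bound and is useless once you integrate $t$ over all of $\R$ for finite $q$. More conceptually: linear waves propagate to spatial infinity at unit speed, so for large $|t|$ essentially all of the $L^r_x$ mass sits in $\{|x|>A\}$; hence the bulk of the space-time Strichartz norm lives in the far region, not the near region, and cannot be made small by pushing $A\to\infty$. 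Weak convergence $G_{\pm,n}\rightharpoonup 0$ does not help either, since weak $L^2(\R)$ convergence does not give uniform tail smallness (think of $G_n=\indic_{[n,n+1]}$). Your near-region argument is also incomplete: the dominated convergence step in $t$ needs an integrable dominating function, and the uniform energy bound alone is a constant, hence not in $L^q(\R)$.

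The paper's proof bypasses all of this with a single H\"older interpolation. Pick $(q_0,r_0)$ with the same scaling $\frac{1}{q_0}+\frac{3}{r_0}=\frac{1}{q}+\frac{3}{r}$ but $q_0<q$, and let $r_1\in(6,\infty)$ satisfy $\frac{3}{r_1}=\frac{1}{q}+\frac{3}{r}$. Then
\[
\|u_n\|_{L^q_tL^r_x}\le \|u_n\|_{L^{q_0}_tL^{r_0}_x}^{\,q_0/q}\,\|u_n\|_{L^\infty_tL^{r_1}_x}^{\,1-q_0/q}.
\]
The first factor is bounded by Corollary \ref{Co:Strichartz}. If $\|u_n\|_{L^q_tL^r_x}\not\to 0$, the second factor is bounded below along a subsequence, so one can choose times $t_n$ with $\|u_n(t_n)\|_{L^{r_1}_x}\ge\eps_1>0$. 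But the hypothesis gives $\vec u_n(t_n)\rightharpoonup 0$ in $\HHH$, and the embedding $\dot H^1_{\rm rad}(\Omega)\hookrightarrow L^{r_1}(\Omega)$ is compact (since $r_1>6$), forcing $\|u_n(t_n)\|_{L^{r_1}_x}\to 0$, a contradiction. The whole point is to trade the time integral for an $L^\infty_t$ norm, so that compactness lives entirely in the spatial variable and the hypothesis ``for \emph{all} sequences $(t_n)$'' can be invoked directly; your Step~1 detour through the scattering profiles $G_{\pm,n}$ is then unnecessary.
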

The fact that the Lemma implies Proposition \ref{P:profile} is by now standard (see e.g. the proof of Theorem 3.1 in \cite{DuyckaertsYang18}), and we omit it. 
\begin{proof}[Proof of Lemma \ref{L:profile}]
 We argue by contradiction. Assume that there exists a sequence of solutions $(u_n)_n$ of \eqref{eq:LW} such that for all sequence $(t_n)_n\in \R^{\N}$,
 \begin{equation}
 \label{weak_0}
  \vec{u}_n(t_n)\xrightharpoonup[n\to\infty]{} (0,0)\text{ in }\HHH.
 \end{equation} 
 Assume that there exist $(q,r)\in (2,\infty]\times (6,\infty)$ with $\frac{1}{q}+\frac{3}{r}<\frac{1}{2}$ and $\eps>0$ such that
 \begin{equation}
  \label{Str_lower_bnd}
 \forall n,\quad \left\|u_n\right\|_{L^q_tL^r_x}\geq \eps.
 \end{equation} 
 Let $(q_0,r_0)$ such that 
 \begin{equation*}
  \frac{1}{q_0}+\frac{3}{r_0}=\frac{1}{q}+\frac{3}{r},\quad 2<q_0<q,
 \end{equation*} 
and let $r_1$ such that $\frac{1}{q}+\frac{3}{r}=\frac{3}{r_1}$ (thus $6<r_1<\infty$). Then by H\"older's inequality,
$$\|u_n\|_{L^q_tL^r_x}\leq \|u_n\|^{\frac{q_0}{q}}_{L^{q_0}_tL^{r_0}_x}\|u_n\|_{L^{\infty}_tL^{r_1}_x}^{1-\frac{q_0}{q}}.$$
Since by Strichartz estimates $\|u_{n}\|_{L^{q_0}_tL^{r_0}_x}$ is bounded from above (see Corollary \ref{Co:Strichartz}), we deduce that there existe $\eps_1>0$ such that 
$$\forall n,\quad \|u_n\|_{L^{\infty}_tL^{r_1}_x}\geq \eps_1.$$
We thus can choose a sequence $(t_n)_n$ such that 
$$ \forall n,\quad \|u_n(t_n)\|_{L^{r_1}}\geq \frac{\eps_1}{2}.$$
This contradicts \eqref{weak_0} and the compactness of the embedding $\dot{H}^1_{\rm rad}(\Omega)\subset L^{r_1}(\Omega)$. The proof is complete.
 \end{proof}
 We will need to consider solutions to the wave equation \eqref{eq:NLW} outside wave cones. For this, it is convenient to multiply the nonlinearity by a characteristic function 
 \begin{definition}
If $(u_0,u_1)\in \dot H^{1}_{0}(\Omega)\times L^{2}(\Omega)$ and $R\geq 1$, 
\emph{the solution of \eqref{eq:NLW} on $\{|x|>R+|t|\}$, with initial data $(u_0,u_1)$}, is by definition the restriction to $\{|x|>R+|t|\}$ of the solution $u$ of the following wave equation,  
\begin{equation}
 \label{NLW_cone}
\begin{cases}
(\partial_{t}^{2}-\Delta)u(t,x)=F(t,x)\indic_{\{|x|>R+|t|\}},\,(t,x)\in\mathbb R\times \mathbb R^{3}\\
(u,\partial_{t}u)|_{t=0}=(u_{0}, u_{1}),\quad 
u|_{\partial \Omega}=0
\end{cases}
\end{equation}
where $F= \iota |u|^{2m}u$ with $\iota=\pm 1$, 
$m>2$. 
\end{definition}
One can adapt the well-posedness theory from Subsection \ref{SS:Cauchy}, yielding local well-posedness and maximal solution\footnote{Note however that since we have truncated the nonlinearity with a nonsmooth function, the persistence of regularity does not hold anymore} for equation \eqref{NLW_cone}. In particular, letting $T_R^*$ be the maximal time of existence for \eqref{NLW_cone}, we have the blow-up criterion
$$ T_R^*<\infty\Longrightarrow
\left\|u{\indic_{\{|x|>R+|t|\}}}  \right\|_{L^{2m+1}_t\left([0,T_R^*),\;L^{2(2m+1)}_x\right)}=\infty,$$
as well as the following scattering criterion. If
$$u {\indic_{\{|x|>R+|t|\}}}  \in {L^{2m+1}_t\left([0,+\infty),\;L^{2(2m+1)}_x\right)},$$
then $u$ scatters for positive times: there exists a solution $u_L$ of the linear wave equation on $\Omega$ such that 
$$ \lim_{t\to+\infty} \left\|\indic_{\{|x|>R+|t|\}}\left|\nabla_{t,x}u_L(t)-\nabla_{t,x}u(t)\right|\right\|_{\dot{H}^1(\Omega)\times L^2(\Omega)}=0.$$
Also, there exists $\eps_0>0$ (independent or $R>1$) such that if for some $T\in (0,\infty]$,
$$ \left\|S_L(t)(u_0,u_1)\indic_{\{|x|>R+|t|\}}  \right\|_{L^{2m+1}_t\left([0,T),\;L^{2(2m+1)}_x\right)}=\eps\leq \eps_0$$
then $T_R^*\geq T$ and
$$ \sup_{t\in [0,T]}\left\|\left|\nabla_{t,x}(u(t)-S_L(t)(u_0,u_1))\right|\indic_{\{|x|>R+|t|\}}\right\|_{L^2}\leq \eps^{2m+1}.$$
We note also that if $T^*$ is the maximal (positive) time of existence for the equation \eqref{eq:NLW} with the same initial data, then $T^*\leq T^*_R$ and the two solutions coincide on $\{(t,x), \; 0\leq t<T^*,\; |x|>R+|t|\}$.

Let $(u_{Ln})_n$ be a sequence of radial solutions of the linear wave equation \eqref{eq:LW} outside the ball. Assume that $(\vec{u}_n(0))_n$ is bounded in $\dot{H}^1(\Omega)\times L^2(\Omega)$ and has a profile decomposition $\{ U_L^j,(t_{j,n})\}_{j\geq 1}$ as in Proposition \ref{P:profile}. Extracting subsequences, reordering and time translating the profiles, we might assume
\begin{equation}
 \label{hyp_profiles}
 \forall n,\; t_{1,n}=0,\quad \forall j\geq 2,\; \lim_{n\to\infty} t_{j,n}\in \{\pm\infty\}.
\end{equation} 
We define the nonlinear profile $U^1$ associated to $U^1_L$ as the solution of the nonlinear wave equation \eqref{eq:NLW} with initial data $\vec{U}^1_L(0)$. If $R\geq 1$ we will also denote by $U^1$ the solution of \eqref{eq:NLW} on $\{|x|>R+|t|\}$ with the same initial data.
% \item if $j\geq 2$, and $\lim_n t_{j,n}=\pm \infty$, then $U^j$ is the unique solution of \eqref{eq:NLW} such that 
%  $$\lim_{t\to \pm \infty} \left\|\vec{U}^j(t)-\vec{U}^j_L(t)\right\|_{\HHH(\Omega)}=0.$$
%  The existence of $U^j$ is standard and follows from the same fixed point argument as the one used for the local well-posedness theory.
% \end{itemize}
\begin{proposition}
\label{P:approx_profile}
Let $u_{Ln}$ be as above, and $R\geq 1$. Assume that the nonlinear profile $U^1$ is well-defined for $\{t\geq 0,\;|x|\geq R+|t|\}$, and  that
$$\indic_{\{|x|>R+|t|\}}U^1\in {L^{2m+1}\left((0,\infty),L^{2(2m+1)}\right)}.$$
Let $u_n$ be the solution of the nonlinear wave equation \eqref{eq:NLW} on $\{|x|>R+|t|\}$. Then for large $n$, $u_n$ is global for positive time, and, letting
$$ \epsilon_n^J(t,x)=u_n(t,x)-U^1(t,x)-\sum_{j=1}^J U^j_{L}(t-t_{j,n},x)-w_n^J(t,x),$$
one has
$$\lim_{J\to+\infty} \limsup_{n\to\infty} \sup_{t\geq 0} \int_{|x|>R+|t|}\left|\nabla_{t,x}\epsilon_n^J(t,x)\right|^2\,dx=0.$$
\end{proposition}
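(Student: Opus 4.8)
The plan is to run the standard nonlinear-profile-approximation argument of Bahouri--Gérard / Kenig--Merle type, adapted to solutions on the exterior wave cone $\{|x|>R+|t|\}$, using the long-time perturbation Lemma \ref{ksmssj} to upgrade the linear approximation (profile decomposition) to a nonlinear one. First I would record the a priori Strichartz bound: since $(\vec u_{Ln}(0))_n$ is bounded in $\HHH$, Corollary \ref{Co:Strichartz} (applied to exponents $q=2m+1$, $r=2(2m+1)$, as well as to auxiliary exponents with strict inequality) gives a uniform bound on $\|u_{Ln}\|_{L^{2m+1}_tL^{2(2m+1)}_x}$ and on $\|\vec u_{Ln}\|_{L^\infty_t\HHH}$. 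By \eqref{Pythagore}, each $\|\vec U^j_L(0)\|_{\SF}$ is bounded, and for $j\geq 2$ the nonlinear profile $U^j$ associated to $U^j_L$ scatters on the relevant cone because $t_{j,n}\to\pm\infty$ forces the profile's initial data to be small in the relevant Strichartz norm on $I_n:=\{t\geq 0\}$ translated by $-t_{j,n}$ (the small-data theory from Subsection \ref{SS:Cauchy}, in its cone-truncated form, then applies). Only finitely many profiles have large $\HHH$-norm, so by Pythagoras the sum $\sum_{j\geq 2}\|U^j_L(\cdot-t_{j,n})\|_{L^{2m+1}_tL^{2(2m+1)}_x}^{2m+1}$ on $\{|x|>R+|t|\}$ stays bounded uniformly in $n$, for $n$ large.

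Next I would build the approximate solution
$$ \tilde u_n(t,x)=U^1(t,x)+\sum_{j=2}^J U^j_L(t-t_{j,n},x)+w_n^J(t,x),$$
(noting $t_{1,n}=0$ by \eqref{hyp_profiles}, so $U^1$ carries the only profile that may be genuinely nonlinear and of large norm), and verify the hypotheses of Lemma \ref{ksmssj} on $I=[0,\infty)$ with the nonlinearity truncated by $\indic_{\{|x|>R+|t|\}}$. The bound \eqref{nhss} on $\|\tilde u_n\|_{L^{2m+1}_tL^{2(2m+1)}_x}$ (uniform over $n$, for $n$ large, once $J$ is fixed) follows from: the assumed finiteness of $\|\indic_{\{|x|>R+|t|\}}U^1\|_{L^{2m+1}L^{2(2m+1)}}$; the bounded sum of the scattering profiles $U^j_L$, $j\geq 2$; and the dispersive decay \eqref{dispersive_part} for $w_n^J$. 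For \eqref{mnasa}, one has $R_{\rm L}\equiv 0$ since $\tilde u_n(0)=u_n(0)$ (the profiles plus remainder reconstruct $\vec u_{Ln}(0)=\vec u_n(0)$ exactly at $t=0$), and ${\rm eq}(u_n)=0$ on the cone by definition; the only nontrivial term is ${\rm eq}(\tilde u_n)$, which is the usual ``error of superposition'' — a finite sum of cross terms among $U^1$, the $U^j_L$, and $w_n^J$. One shows $\|{\rm eq}(\tilde u_n)\|_{L^1_tL^2_x}\to 0$ as $n\to\infty$ (for each fixed $J$) using the pseudo-orthogonality $|t_{j,n}-t_{j',n}|\to\infty$ to kill interactions between distinct profiles, the Strichartz decay of $w_n^J$ to kill its interaction with the profiles, and the almost-orthogonality in the $\HHH$-norm (Pythagoras) to control the ``tail'' $\sum_{j>J_0}$ of small profiles uniformly; Hölder in the exponents $2m+1$, $2(2m+1)$, exactly as in the exponents appearing in Lemma \ref{ksmssj}, closes these estimates.

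With these verifications, Lemma \ref{ksmssj} (applied with $M$ the uniform bound on $\|\tilde u_n\|$, and $\varepsilon=\varepsilon_n^J\to 0$) gives, for $n$ large (so that $\varepsilon_n^J\leq\varepsilon_M$), global existence of $u_n$ on $\{t\geq 0\}$-cone, the Strichartz bound $\|u_n-\tilde u_n\|_{L^{2m+1}_tL^{2(2m+1)}_x}\leq C_M\varepsilon_n^J$ (hence $u_n$ is global for positive time by the blow-up criterion), and
$$\sup_{t\geq 0}\left\|\indic_{\{|x|>R+|t|\}}\nabla_{t,x}\bigl(u_n(t)-\tilde u_n(t)\bigr)\right\|_{L^2}\leq C_M\varepsilon_n^J$$
(using $R_{\rm L}=0$). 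Since $u_n-\tilde u_n=\epsilon_n^J$ on the cone, taking $\limsup_{n\to\infty}$ then $J\to\infty$ gives the claim, provided $\varepsilon_n^J\to 0$ as $n\to\infty$ for each $J$ and then uniformly as $J\to\infty$ — which is precisely the content of the superposition-error estimate $\|{\rm eq}(\tilde u_n)\|_{L^1_tL^2_x}\to 0$ combined with $\lim_J\limsup_n\|w_n^J\|_{L^{2m+1}_tL^{2(2m+1)}_x}=0$.

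\emph{Main obstacle.} The delicate point is the estimate of the superposition error ${\rm eq}(\tilde u_n)$, and more precisely handling the interaction between the (possibly large, genuinely nonlinear) first profile $U^1$ and the remaining profiles $U^j_L$, $j\geq 2$: one needs that $U^1$ itself enjoys good Strichartz integrability on the cone — which is the hypothesis — and then that its overlap with the $j\geq 2$ profiles vanishes, for which the separation $|t_{j,n}|=|t_{j,n}-t_{1,n}|\to\infty$ must be exploited together with local-in-time smallness of either factor (the tail of $U^1$ for $t$ large, the translated profile for $t$ bounded). Controlling the infinitely many small profiles uniformly — i.e. making the bound on the cross terms involving $\sum_{j>J_0}U^j_L$ independent of $J_0$ — via the $\HHH$-Pythagorean relation \eqref{Pythagore} and the uniform small-data Strichartz bound is the other technically heavy ingredient; everything else is a routine adaptation of the by-now-standard argument (as in \cite{DuyckaertsYang18, DuKeMe13}) to the cone-truncated exterior setting.
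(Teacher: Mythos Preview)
Your approach is correct, but the paper's proof is dramatically shorter because it exploits a feature of this problem that you overlook: since the profile decomposition here involves \emph{only time translations} (no scaling), and $t_{j,n}\to\pm\infty$ for $j\geq 2$, each linear profile $U^j_L(\cdot-t_{j,n})$, $j\geq 2$, actually tends to zero in $L^{2m+1}_t((0,\infty),L^{2(2m+1)}_x)$ after multiplication by $\indic_{\{|x|>R+|t|\}}$, by dominated convergence. The paper therefore takes $\tilde u=U^1$ alone as the approximate solution in Lemma~\ref{ksmssj}, so that ${\rm eq}(\tilde u)=0$ exactly, and all of $\sum_{j\geq 2}U^j_L(\cdot-t_{j,n})+w_n^J$ sits in $R_L$; the whole proof reduces to the one-line observation that $\|R_L\indic_{\{|x|>R+|t|\}}\|_{L^{2m+1}_tL^{2(2m+1)}_x}\to 0$. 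By contrast, you place all profiles and $w_n^J$ into $\tilde u_n$, forcing yourself to estimate the superposition error ${\rm eq}(\tilde u_n)$ via pseudo-orthogonality of the $t_{j,n}$, control of cross terms, tail estimates for $\sum_{j>J_0}$, etc. That machinery is necessary for scaling-critical problems (where translated/rescaled profiles do \emph{not} vanish in Strichartz norm and are merely orthogonal), but here it is overkill: your ``main obstacle'' dissolves once you notice the profiles themselves vanish on the cone. In fact, if you trace through your own error estimate, the crux is bounding $\|(|U^1|^{2m}U^1-|\tilde u_n|^{2m}\tilde u_n)\indic_{\rm cone}\|_{L^1L^2}$, which by H\"older reduces precisely to the paper's vanishing claim for $\sum_{j\geq 2}U^j_L(\cdot-t_{j,n})+w_n^J$ on the cone---so the two arguments converge, but the paper gets there in three lines.
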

\begin{proof}
By Lemma \ref{ksmssj} (or rather its version adapted to solutions on $\{|x|>R+|t|\}$), it is sufficient to prove
$$ \lim_{J\to\infty}\limsup_{n\to\infty}\left\| \Big(\sum_{j=2}^J U^j_{L}(\cdot-t_{j,n})-w_n^J\Big)\indic_{\{|x|>R+|t|\}}\right\|_{L^{2m+1}_t\left((0,\infty),L^{2(2m+1)}_x\right)}=0.$$
Using that 
$$\lim_{J\to\infty}\limsup_{n\to\infty}\left\|w^J_n\right\|_{L^{2m+1}\left((0,\infty),L^{2(2m+1)}\right)}=0,$$
we see that it is sufficient to prove:
$$ J\geq 2\Longrightarrow \lim_{n\to\infty}\left\| \sum_{j=2}^J U^j_{L}(\cdot-t_{j,n})\indic_{\{|x|>R+|t|\}}\right\|_{L^{2m+1}_t\left((0,\infty),L^{2(2m+1)}_x\right)}=0.$$
Since $\lim_{n\to\infty}t_{j,n}\in \{\pm\infty\}$, this last property follows from the dominated convergence theorem, concluding the proof.
\end{proof}

\subsection{ Zeros of stationary solutions}
In this subsection, we state several properties on a class of singular stationary solutions  involved in 
\cite{ DuKeMe14,DuyckaertsRoy17,DuyckaertsYang18}.
\begin{proposition}
	\label{prop:DKM-stationary}
	Let $m>2$, $m\in\mathbb{N}$, and $\ell\in\mathbb R\setminus\{0\}$. 
	Then there exists a radial, $C^{2}$ solution $Z_\ell(x)=Z_\ell(|x|)$ of 
	\begin{equation}
	\label{eq:stationary}
	\Delta Z_{\ell}+Z_{\ell}^{2m+1}=0\quad\text{on}\quad
	\mathbb R^{3}\setminus\{0\},
	\end{equation}
	such that 
	\begin{equation}
	\label{eq:decay-infty}
	\forall\;r\geq 1,\quad
	\bigl | r \, Z_{\ell}(r)-\ell\bigr |\leq \frac{C}{r^{2}}
	\end{equation}
	\begin{equation}
	\label{eq:derivative-infty}
	\lim_{r\rightarrow\infty}
	r^{2}\frac{d Z_{\ell}}{dr}=-\ell\,.
	\end{equation}
	Furthermore, $Z_{\ell}\not\in L^{3m}$,
	where $3m$
	is the critical Sobolev exponent corresponding to 
	$\displaystyle s_{m}=\frac32-\frac{1}{m}$.
	In particular, $Z_{\ell}\not\in \dot{H}^{s_{m}}$.
	Moreover, the zeros of $Z_{\ell}$ are given by a sequence $\{r_{j}\}_{j=0}^{\infty}$ such that 
	\[
	r_{0}>r_{1}>\cdots>r_{j}>\cdots\longrightarrow 0,
	\quad j\rightarrow\infty.
	\]
	%and 
	%$ Z_{\ell}(r_{j})=0$, for all $ j\in\mathbb{N}$.
\end{proposition}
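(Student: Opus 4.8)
The plan is to construct $Z_\ell$ by solving the ODE that \eqref{eq:stationary} becomes for radial functions, and then analyze its behavior both at infinity and near the origin. Writing $w(r) = rZ_\ell(r)$, equation \eqref{eq:stationary} reduces to
$$ w'' = -\frac{w^{2m+1}}{r^{2m}} \quad \text{on } (0,\infty). $$
I would first solve this from infinity: seek a solution with $w(r) \to \ell$ and $w'(r) \to 0$ as $r \to \infty$. Equivalently, one sets up the integral equation
$$ w(r) = \ell + \int_r^\infty (s-r)\frac{w(s)^{2m+1}}{s^{2m}}\,ds, $$
and runs a contraction-mapping argument on $[R_0,\infty)$ for $R_0$ large, in the complete metric space $\{w : \|w-\ell\|_{L^\infty[R_0,\infty)} \le |\ell|/2\}$; the kernel is integrable since $\int_r^\infty (s-r) s^{-2m}\,ds \lesssim r^{2-2m}$ and $m > 2$ guarantees enough decay (in fact $m \ge 1$ suffices here). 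This simultaneously yields \eqref{eq:decay-infty} and, differentiating the integral equation, \eqref{eq:derivative-infty}: $w'(r) = -\int_r^\infty w(s)^{2m+1}s^{-2m}\,ds$, so $r^2 w'(r) = -r^2\int_r^\infty w(s)^{2m+1}s^{-2m}\,ds \to -\ell^{2m+1}\cdot\frac{1}{2m-1}$; a rescaling of $\ell$ (the ODE is not scale-invariant but one can absorb the constant) arranges the normalization $r^2 (dZ_\ell/dr) \to -\ell$ stated — or more simply one just tracks the constants and states \eqref{eq:derivative-infty} with the correct constant, noting $Z_\ell = w/r$ gives $r^2 Z_\ell' = r w' - w \to -\ell$.

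Next I would extend the solution backwards toward $r = 0$. By standard ODE theory the solution persists on a maximal interval $(r_*, \infty)$, and I claim $r_* = 0$. The key is an a priori bound: multiplying $w'' = -w^{2m+1}/r^{2m}$ by $w'$ and integrating, one gets an energy-type identity controlling $(w')^2 + \frac{1}{(m+1)r^{2m}}w^{2m+2}$ up to a manageable error from the explicit $r$-dependence, which prevents blow-up of $w$ in finite positive $r$; alternatively one argues directly that $w$ stays bounded on any $[\epsilon, R_0]$. Hence the solution is defined on all of $(0,\infty)$, is $C^2$ there, and $Z_\ell(x) = w(|x|)/|x|$ solves \eqref{eq:stationary} on $\R^3\setminus\{0\}$.

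For the non-integrability statement, I would show $Z_\ell \notin L^{3m}$ near the origin. This is the step requiring the most care: I need the precise blow-up rate of $Z_\ell$ as $r \to 0$. The natural guess is $Z_\ell(r) \sim c\, r^{-1/m}$ (the homogeneous solution of $\Delta Z + Z^{2m+1} = 0$ scaling like $|x|^{-1/m}$), which would give $|Z_\ell|^{3m} \sim r^{-3}$, not integrable at $0$. To make this rigorous without pinning down the exact asymptotic constant, it suffices to show $\liminf_{r\to 0} r^{1/m}|Z_\ell(r)| > 0$, or even just that $\int_0^1 |Z_\ell(r)|^{3m} r^2\,dr = \infty$; one can extract this from the ODE by a comparison/bootstrap argument, or cite the analysis in \cite{DuKeMe14} where exactly these singular solutions are studied. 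Since $s_m = \tfrac32 - \tfrac1m$ has critical Sobolev exponent $3m$, this gives $Z_\ell \notin \dot H^{s_m}$ as well.

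Finally, for the zeros: \eqref{eq:decay-infty} shows $Z_\ell > 0$ (say $\ell > 0$) for $r \ge r_0$ large, so all zeros lie in a bounded interval $(0, r_0)$. Standard Sturm-type oscillation arguments for the equation $w'' + \frac{w^{2m}}{r^{2m}}w = 0$ — viewing $q(r) = w^{2m}/r^{2m} > 0$ between consecutive zeros as a potential that blows up like $r^{-2m}$ as $r \to 0$ — force infinitely many oscillations accumulating only at $0$: between any two consecutive zeros the solution cannot stay small (by the structure of the ODE the amplitude is controlled from below), and the "clock" $\int q^{1/2}$ diverges at $0$. Hence the zero set is a sequence $r_0 > r_1 > \cdots \to 0$. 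I expect the oscillation-accumulation-at-$0$ claim, together with the sharp singular rate needed for $Z_\ell \notin L^{3m}$, to be the main obstacles; both are handled in the cited works \cite{DuKeMe14,DuyckaertsRoy17,DuyckaertsYang18}, so the cleanest route is to reduce to or quote those.
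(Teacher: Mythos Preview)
Your construction of $Z_\ell$ via the fixed-point argument from infinity, the extension to $(0,\infty)$, and the claim $Z_\ell\notin L^{3m}$ all match what \cite{DuKeMe14} does, and the paper simply cites that reference for these parts. The gap is in the oscillation statement. Your Sturm-type argument treats the ODE as $w'' + q(r)w = 0$ with $q(r) = w^{2m}/r^{2m}$ and appeals to the divergence of the ``clock'' $\int q^{1/2}$ near $0$; but the equation is nonlinear, $q$ depends on the unknown $w$ and vanishes at every zero of $w$, so Pr\"ufer/WKB heuristics do not apply without an a~priori amplitude lower bound between consecutive zeros, which you have not supplied. Your fallback of citing \cite{DuKeMe14,DuyckaertsRoy17,DuyckaertsYang18} also fails: those references give the existence, asymptotics, and non-integrability of $Z_\ell$, but \emph{not} the infinite oscillation near $r=0$; the paper says explicitly that this is the new content of the proposition.

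The paper's actual argument performs the inversion $h(s) = Z_\ell(1/s)$, which turns the radial ODE into the Emden--Fowler equation $h'' + s^{-4} h^{2m+1} = 0$ on $(0,\infty)$, with $h(s)/s \to \ell$ and $h'(s)\to \ell$ as $s\to 0$. It then invokes Fowler's 1931 trichotomy for this equation: every solution is either the explicit power $\pm c\,s^{2/(n-1)}$, an Emden-type solution tending to a nonzero constant as $s\to\infty$, or an oscillatory solution whose zeros go to $+\infty$. The behaviour $h(s)\sim \ell s$ at $0$ rules out the first type, and the unboundedness of $Z_\ell$ at the origin (already established via $Z_\ell\notin L^{3m}$) forces $h$ to be unbounded at infinity, ruling out the second. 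Hence $h$ is of oscillatory type as $s\to\infty$, i.e., $Z_\ell$ has infinitely many zeros accumulating at $r=0$. This inversion-plus-Fowler-classification step is the idea you are missing.
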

\begin{remark}
	
	The existence of such a solution $Z_{\ell}$ with
	properties \eqref{eq:decay-infty}\eqref{eq:derivative-infty}
	and $Z_{\ell}\not\in L^{3m}$ had been demonstrated in
	\cite{DuKeMe14}.
	It remains to show that $Z_{\ell}(r)$ oscillates infinitely
	often towards $0$. This provides a more precise characterization on the behavior
	of $Z_{\ell}(r)$ as $r$ approaches the origin. 
\end{remark}
The proof of the oscillatiory property of $Z_\ell$ in Proposition \ref{prop:DKM-stationary} relies on the following classical result due to Fowler.
\begin{lemma}
	\label{jdhshs}
	Let $\theta(x)$ be a solution of 
	\begin{equation}
	\label{hsabss}
	\frac{d^2\theta}{dx^2}+x^{-4}\theta^n=0,\;x\in(0,+\infty),
	\end{equation}
	where $n>5$ is an odd integer. Then $\theta$ is one of the following three distinct types
	\begin{enumerate}
		\item [(i)]Special solutions 
		\begin{equation}
		\label{Q}
		\theta(x)=\pm\left(\frac{2(n-3)}{(n-1)^2}\right)^{\frac{1}{n-1}}x^{\frac{2}{n-1}};
		\end{equation}
		\item[(ii)] Emden's solutions with one arbitrary constant $C$
		\begin{equation}
		\label{nbssa}
		\theta(x)=C-\frac{\alpha(x) C^n}{6x^2},\quad \lim_{x\to\infty}\alpha(x)=1,
		\end{equation}
		\item[(iii)] $\theta(x)$ oscillates about $\theta=0$ with the asymptotic forms
		\begin{equation}
		\label{maas}
		\begin{split}
	|\theta(X_n)|&\approx AX_n^{\frac{4}{n+3}}\\
	x_{n+1}-x_{n}\approx&\frac{1}{A^{\frac{n-1}{2}}}
	\left(\frac{2}{n+1}\right)^{\frac{1}{2}}\frac{\Gamma\left(\frac{1}{2}\right)\Gamma\left(\frac{1}{n+1}\right)}{\Gamma\left(\frac{1}{2}+\frac{1}{n+1}\right)}X_{n}^{\frac{8}{n+3}}
	,	\end{split}
		\end{equation}
	where $A$ is a constant of integration, $\{X_n\}$ is the sequence of zeros of $\theta'(x)$, and
	$\{x_n\}$ is the sequence of zeros of $\theta(x)$, that satisfy $\lim_n x_n=+\infty$.
	\end{enumerate}
\end{lemma}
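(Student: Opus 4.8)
The plan is to reduce \eqref{hsabss} to an autonomous planar system by the Emden--Fowler change of variables and to read off the three types from its phase portrait. Set $\gamma=\tfrac{2}{n-1}\in(0,1)$, $t=\log x$, and $\theta(x)=x^{\gamma}\phi(\log x)$; a direct computation turns \eqref{hsabss} into
\begin{equation*}
\phi''+c\,\phi'+b\,\phi+\phi^{n}=0,\qquad c=\frac{5-n}{n-1}<0,\quad b=\frac{2(3-n)}{(n-1)^{2}}<0,
\end{equation*}
where now $'=\tfrac{d}{dt}$ and $n>5$ is used. The equilibria of the system $\phi'=\psi,\ \psi'=-c\psi-b\phi-\phi^{n}$ are $\phi\equiv0$ and $\phi\equiv\pm\phi_{*}$ with $\phi_{*}=|b|^{1/(n-1)}=\bigl(\tfrac{2(n-3)}{(n-1)^{2}}\bigr)^{1/(n-1)}$; undoing the substitution, the latter are exactly the special solutions \eqref{Q}, which accounts for type (i). Linearising, $0$ is a saddle with eigenvalues $-\gamma<0$ and $1-\gamma>0$, whereas $\pm\phi_{*}$ are sources (both eigenvalues with positive real part, since $c<0$ and $V''(\pm\phi_{*})=(n-1)|b|>0$).

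The crucial tool is the energy $\mathcal{E}(t)=\tfrac12(\phi')^{2}+V(\phi)$ with $V(\phi)=-\tfrac{|b|}{2}\phi^{2}+\tfrac{1}{n+1}\phi^{n+1}$, a symmetric double well (recall $n$ is odd), satisfying $\mathcal{E}'=-c(\phi')^{2}=|c|(\phi')^{2}\ge0$; thus $\mathcal{E}$ is nondecreasing as $t\to+\infty$. First I would dispose of the case where $\phi$ has finitely many zeros, i.e.\ is eventually of one sign. If $\mathcal{E}$ stays $\le0$, then it converges, so $\phi,\phi'$ are bounded (since $V(\phi)\le\mathcal{E}$ and $V$ is coercive), $\int^{\infty}(\phi')^{2}<\infty$, hence $\phi'\to0$ by uniform continuity, and a LaSalle argument forces $(\phi,\phi')$ to an equilibrium: either $\phi\to0$, in which case $\phi$ lies on the stable manifold of the saddle, where $\phi\sim C'e^{-\gamma t}$, giving, back in the $x$ variable, the Emden asymptotics $\theta(x)=C-\alpha(x)C^{n}/(6x^{2})$, $\alpha\to1$ (type (ii)); or $\phi\to\pm\phi_{*}$, which is a source, hence impossible unless $\phi\equiv\pm\phi_{*}$ (type (i)). If instead $\mathcal{E}$ becomes positive at some time, note that $V(\phi)>0$ (for $\phi>0$) only when $\phi>\phi_{0}:=(\tfrac{n+1}{2}|b|)^{1/(n-1)}>\phi_{*}$, so once $\mathcal{E}>0$ there is no turning point in $\{0<|\phi|<\phi_{0}\}$; therefore the trajectory, being eventually of one sign and non-constant, must eventually cross $\phi=0$ — a contradiction — unless it runs off to infinity, which is also excluded: a monotone runaway solution either blows up in finite $t$ (comparing with $\phi'\gtrsim\phi^{n}$) or forces $\psi'\to-\infty$, neither being possible for a solution global on $(0,\infty)$.

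The remaining case is infinitely many zeros, which then tend to $+\infty$, and here $\mathcal{E}(t)\to+\infty$ (a bounded $\mathcal{E}$ would again converge and force, via LaSalle, a single sign). This is type (iii), and the asymptotics \eqref{maas} come from an adiabatic/WKB analysis. Over one half-oscillation, $\mathcal{E}$ changes by $\Delta\mathcal{E}\approx|c|\!\int(\phi')^{2}\,dt\approx2|c|\!\int_{0}^{\phi_{\max}}\!\sqrt{2(\mathcal{E}-V)}\,d\phi$ and the elapsed time is $\Delta t\approx2\!\int_{0}^{\phi_{\max}}\!\frac{d\phi}{\sqrt{2(\mathcal{E}-V)}}$, with $\phi_{\max}\approx((n+1)\mathcal{E})^{1/(n+1)}$ large; replacing $V$ by $\tfrac{1}{n+1}\phi^{n+1}$ and substituting $\phi=\phi_{\max}s$ turns both into the Beta integrals $\int_{0}^{1}(1-s^{n+1})^{\pm1/2}\,ds$, which evaluate to the ratio of Gamma functions in \eqref{maas}. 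Tracking these relations (in particular $\Delta\mathcal{E}/\Delta t\approx\mathrm{const}\cdot\mathcal{E}$, whence $\mathcal{E}$ grows exponentially in $t$, i.e.\ like a power of $x$) and undoing $\theta=x^{\gamma}\phi$ yields $|\theta(X_{n})|\approx AX_{n}^{4/(n+3)}$ together with the stated spacing of consecutive zeros, where $X_{n}$ are the zeros of $\theta'$ — corresponding to $\phi'=-\gamma\phi$, which sit near the turning points when the amplitude is large — and $A$ encodes the (slowly varying) energy level.

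The main obstacle is this last step: making the adiabatic approximation rigorous — controlling the error in the period integrals as $\mathcal{E}\to+\infty$ and in the relation between the $X_{n}$ and the exact turning points — so as to pin down the precise constants; the exclusion of runaway solutions is a minor additional point. All of this is classical, going back to Fowler; since only the qualitative trichotomy and the orders of magnitude in \eqref{maas} are needed later, one may alternatively just quote his work.
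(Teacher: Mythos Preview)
The paper does not prove this lemma at all: its entire ``proof'' is a citation to pp.~281--282 of Fowler's 1931 paper. You, by contrast, have sketched the classical argument itself --- the Emden--Fowler substitution $\theta(x)=x^{2/(n-1)}\phi(\log x)$, the resulting autonomous second-order ODE with increasing energy, the phase-portrait classification into equilibria (type~(i)), stable manifold of the saddle (type~(ii)), and unbounded-energy oscillations (type~(iii)), together with the period/amplitude asymptotics via Beta integrals. Your computations of $c$, $b$, the eigenvalues $-\gamma$ and $1-\gamma$ at the origin, and the identification of the Emden asymptotics from the stable manifold all check out; this is essentially Fowler's own reduction. As you yourself note, the one genuinely delicate point is making the adiabatic step for~\eqref{maas} rigorous with the precise constants, and the exclusion of monotone blow-up deserves a line or two more; but since the paper only needs the qualitative trichotomy and invokes Fowler anyway, your sketch is already more than what the paper supplies.
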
	\begin{proof}Please see p. 281--282 of \cite{Fowler31}.
\end{proof}
\begin{remark}
	The equation \eqref{hsabss} along with its general form $\theta''+x^{\sigma}\theta^\lambda=0$ is usually referred as the Emden-Fowler equation.
	When $\lambda>1$ is not an integer, one may find in \cite{KiCh93BO} a similar classification on the solutions of Emden-Fowler equations in a more general setting.  
\end{remark}
\begin{proof}[Proof of the oscillation of $Z_\ell$]
	We may assume $\ell>0$ since the case $Z_{-\ell}=-Z_{\ell}$.
	By scaling invariance and the uniqueness of the fixed point argument,
	it suffices to consider $\ell=1$ 
	(see Remark 2.5 in \cite{DuyckaertsRoy17}) and
	we denote  by $Z(r)=Z_{1}(r)$ for brevity.
	\medskip
	
		Rewrite \eqref{eq:stationary} fulfilled by $Z$ 
	as the following ordinary differential equations (in the $r$ variables)
	\begin{equation}
	\label{eq:radial-ode}
	Z''(r)+\frac{2}{r}Z'(r)+Z(r)^{2m+1}=0\,.
	\end{equation}
	Let $h(s)=Z(1/s)$, $s\in (0,\infty)$.
	Then $h$ is a $C^{2}$ solution of 
	\begin{equation}
	\label{eq:h}
	h''(s)+s^{-4}h(s)^{2m+1}=0,\quad s>0\,,
	\end{equation}
	which satisfies 
	\begin{equation}
	\label{eq:h-aroundzero}
	\lim_{s\rightarrow 0} \frac{h(s)}{s}=1,\quad
	\lim_{s\rightarrow 0} h'(s)=1\,.
	\end{equation}
	We are reduced to showing that the zeros of $h$ form a sequence 
	$\{s_{j}\}_{j=0}^{\infty}$ such that
	\[
	0<s_{0}<s_{1}<s_{2}<\cdots<s_{j}<\cdots\longrightarrow\infty.
	\]
	In view of Lemma \ref{jdhshs}, it suffices to show that $h(s)$ is of type (iii).
	Invoking that $Z(r)$ is not bounded at the origin, 
	we see that $h(s)$ can not 
	be of the form \eqref{nbssa}.
   By \eqref{eq:h-aroundzero}, $h(s)$ is not a function given by the formula \eqref{Q}.
   %    
% 	Assuming otherwise,  we introduce 
% 	$w(r)=r^{\frac{1}{m}}Z(r)$ and set
% 	\begin{equation}
% 	\label{eq:W}
% 	\mathcal W(r)=\frac12|rw'(r)|^{2}-\frac{(m-1)}{m^{2}}|w(r)|^{2}
% 	+\frac{1}{2(m+1)}|w(r)|^{2(m+1)}\,.
% 	\end{equation}
% 	Then we have as in \cite{Shen14,DuyckaertsYang18}
% 	\begin{equation}
% 	\label{eq:identity}
% 	\frac{d}{dr}\mathcal W(r)=-\frac{m-2}{m}\,r\,|w'(r)|^{2}\,,
% 	\end{equation}
% 	and  $\mathcal W(r)\rightarrow 0$ as $r\rightarrow+\infty$.
% 	Since $h(s)$ is given by \eqref{Q} 
% 	as $s\rightarrow+\infty$, a direct computation yields
% 	\begin{equation}
% 	\label{eq:WW}
% 	\lim_{r\rightarrow0+}\mathcal W(r)=-C_{m},\quad
% 	C_{m}=
% 	\frac{2m+1}{2(m+1)}\left(\frac{m-1}{m^{2}}\right)^{\frac{m+1}{m}}.
% 	\end{equation}
% 	From \eqref{eq:identity} and \eqref{eq:WW}, we have $\mathcal W(r)\leq -C_{m}$
% 	for all $r>0$.
% 	This is a contradiction. 
Hence  $h(s)$ oscillates infinitely often and behaves  asymptotically according to formula \eqref{maas}. 
\end{proof}
\subsection{Radial stationary solutions outside the unit ball}
\label{SS:stationary}
Let $Z_{1}(x)$ be the radial solution of equation \eqref{eq:stationary} corresponding to $\ell=1$.
As we have seen in the last subsection, the zeros of $Z_{1}$
form a sequence 
$\{r_{j}\}_{j=0}^{\infty}$  with the following property
\begin{equation}
\label{hgnti}
r_{0}>r_{1}>\cdots>r_{j}>\cdots\longrightarrow 0,
\quad j\rightarrow\infty\;.
\end{equation}
Let $Q_{j}(r)=r_{j}^{1/m}Z_{1}(r_{j}r)$.
Then $Q_{j}(|x|)$ is the radial solution of the following elliptic equation outside the unit ball 
$\Omega=\mathbb R^{3}\setminus \mathbb B$
with the Dirichlet boundary condition
\begin{equation}
\label{eq:Q}
-\Delta Q=|Q|^{2m}Q,\quad Q|_{\partial\Omega}=0,\quad
x\in\Omega, \quad m>2\,,m\in\mathbb{N},
\end{equation}
where $\Delta=\Delta_D$ is the Dirichlet-Laplacian, and $Q$ belongs to $ \dot H^{1}_{0}(\Omega)$.
%\begin{remark}
%	Recall the asymptotic behavior of $h(s)=Z_1(1/s)$ given by \eqref{eq:good}, we have
%	\[|Q_j(r)|\leq C r_j^{-\frac{m-2}{m(m+2)}}\,r^{-\frac{2}{m+2}},\quad r\longrightarrow 0.\] 
%\end{remark}

Notice that $Q_{j}(r)$
has exactly $j$ zeros in $(1,+\infty)$ for each $j\in \mathbb{N} $ and $Q_{j}(1)=0$.
Define the energy functional 
\[
E(Q)=\frac12\int_{\Omega}|\nabla Q(x)|^{2}dx
-\frac{1}{2(m+1)}\int_{\Omega} |Q(x)|^{2(m+1)}dx.
\]
Then 
\begin{equation}
\label{eq:scaled-energy}
E(Q_{j})=\frac{m}{2(m+1)}\,r_{j}^{-\frac{m-2}{m}}
\int_{r_{j}}^{+\infty}|Z_{1}(r)|^{2(m+1)}r^{2}dr.
\end{equation}
This formula with \eqref{hgnti} clearly yields  $E(Q_{j})\longrightarrow+\infty$ monotonically
as $ j\rightarrow \infty$. The following Lemma shows that there are no other stationary solutions for equation \eqref{eq:NLW}.
\begin{lemma}
\label{L:uniqueness}
 Let $Q\in \dot{H}^1(\Omega)$, radial, such that $-\Delta Q=|Q|^{2m}Q$. Then $Q\equiv 0$, or there exists a sign $\pm$ and $\alpha>0$ such that $Q(r)=\pm\alpha^{\frac 1m} Z_1\left( \alpha r \right)$. In particular, if $Q(1)=0$, then $Q\equiv 0$ or $Q=\pm Q_j$ for some $j\geq 0$.
\end{lemma}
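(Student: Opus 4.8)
The plan is to reduce the classification of radial $\dot H^1(\Omega)$ stationary solutions to the known facts about $Z_\ell$ from Proposition \ref{prop:DKM-stationary} via the ODE satisfied by $Q$. First I would write the equation $-\Delta Q=|Q|^{2m}Q$ radially as $Q''+\frac 2r Q'+|Q|^{2m}Q=0$ on $(1,\infty)$, with $Q\in\dot H^1_{\rm rad}(\Omega)$. Since $m$ is an integer, $|Q|^{2m}Q=Q^{2m+1}$ at points where $Q>0$ and equals $-|Q|^{2m+1}$ where $Q<0$; in any case the nonlinearity is $C^1$ in $Q$, so by Cauchy--Lipschitz the solution is determined on a maximal interval by $(Q(r_0),Q'(r_0))$ at any point $r_0>1$. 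The first step is to show $Q$ does not blow up as $r\to\infty$ and in fact $Q(r)\to 0$: the radial Sobolev inequality \eqref{rad_Sob} gives $|Q(r)|\lesssim r^{-1/2}\|Q\|_{\dot H^1}\to 0$, so $Q$ is globally defined for large $r$ and tends to $0$ at infinity.

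The heart of the argument is to identify $Q$, after a scaling, with some $Z_\ell$. If $Q\equiv 0$ we are done, so assume $Q\not\equiv 0$. Then I would argue that $Q$ cannot vanish identically to infinite order, hence there is a last zero (or no zero) $r_* \ge 1$ of $Q$ on $[1,\infty)$, beyond which $Q$ has a fixed sign, say $Q>0$ on $(r_*,\infty)$ (the case $Q<0$ gives the $-$ sign by $Q\mapsto -Q$). On $(r_*,\infty)$, $Q$ solves $Q''+\frac 2rQ'+Q^{2m+1}=0$, i.e. the same ODE \eqref{eq:radial-ode} as $Z_1$. The next step is the asymptotic analysis at $r=\infty$: introducing $g=rQ$ one gets $g''=-r^{-(2m+1)}g^{2m+1}\cdot r^{?}$ — more cleanly, substituting $s=1/r$, $h(s)=Q(1/s)$ converts the equation into the Emden--Fowler form $h''+s^{-4}h^{2m+1}=0$, exactly \eqref{eq:h}. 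Knowing $h(s)\to 0$ as $s\to 0$ and $h>0$ near $0$, Fowler's classification (Lemma \ref{jdhshs}), or rather the analysis already carried out for $Z_1$, forces $h$ to be of Emden type \eqref{nbssa}: it cannot be of type (iii) since those solutions oscillate and accumulate zeros at $s\to\infty$ but here $h>0$ near $s=0$ is compatible, so instead one uses that $h$ is bounded near $0$, ruling out type (iii) whose solutions behave like $AX_n^{4/(n+3)}\to 0$... Here I must be careful: the relevant regime is $s\to 0$ (i.e. $r\to\infty$), and the Emden solution \eqref{nbssa} has $\lim_{s\to 0}h(s)=C$ with $h(s)=C-\frac{\alpha(s)C^{2m+1}}{6s^2}$ — but that blows up as $s\to 0$ unless we reparametrize. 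The correct route is: near $r=\infty$, linearize — the ODE $Q''+\frac 2rQ'+Q^{2m+1}=0$ with $Q\to 0$ has $Q^{2m+1}$ as a lower-order term, so $(r^2Q')'=-r^2Q^{2m+1}$ is integrable (since $|Q|^{2m+1}\lesssim r^{-(2m+1)/2}$ and $2m+1>2$), giving $r^2Q'\to -\ell$ for some constant $\ell$, and then $Q(r)=\ell/r+O(r^{-2})$. If $\ell=0$ one shows $Q\equiv 0$ by a Gronwall/uniqueness argument at infinity; if $\ell\ne 0$, then $Q$ and $\alpha^{1/m}Z_1(\alpha\cdot)$ (with $\alpha$ chosen so the constants match, $\alpha^{1/m}\cdot \alpha^{-1}=\ell$, i.e. $\alpha=\ell^{-m/(m-1)}$, adjusting sign) have the same asymptotics $\ell/r$ and $-\ell/r^2$ for the derivative, hence by uniqueness of the fixed-point construction of $Z_\ell$ (Remark 2.5 of \cite{DuyckaertsRoy17}, invoked in Proposition \ref{prop:DKM-stationary}) they coincide on $(r_*,\infty)$, and then by Cauchy--Lipschitz they coincide wherever both are defined, so $Q(r)=\pm\alpha^{1/m}Z_1(\alpha r)$ on all of $(1,\infty)$.

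Finally, for the last sentence: if in addition $Q(1)=0$, then $\pm\alpha^{1/m}Z_1(\alpha\cdot 1)=0$, i.e. $\alpha=r_j$ for some zero $r_j$ of $Z_1$ (using that the zeros of $Z_1$ are exactly $\{r_j\}_{j\ge 0}$ by Proposition \ref{prop:DKM-stationary}), and then $Q(r)=\pm r_j^{1/m}Z_1(r_jr)=\pm Q_j(r)$ by the definition of $Q_j$. The main obstacle I anticipate is the rigidity step near infinity: proving that the asymptotic constant $\ell$ determines the solution uniquely (the matching with $Z_\ell$), which requires either quoting the fixed-point uniqueness behind Proposition \ref{prop:DKM-stationary} or redoing a contraction-mapping argument on a weighted space near $r=\infty$ to show that solutions of \eqref{eq:radial-ode} with prescribed leading behavior $\ell/r$ are unique; handling the degenerate case $\ell=0$ (showing it forces $Q\equiv 0$) is a variant of the same estimate. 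The part about $Q$ having only finitely many zeros on $[1,\infty)$ — equivalently a well-defined last zero — follows because the zeros of $Z_1$ accumulate only at $0$, i.e. at $r=\infty$ after the scaling they would accumulate at $\alpha\cdot\infty$; but once we know $Q=\pm\alpha^{1/m}Z_1(\alpha\cdot)$ this is automatic, so logically I would prove the identification first on an interval near infinity where $Q$ has constant sign (which exists since $Q(r)\sim\ell/r\ne 0$), and only afterwards read off the global structure of the zeros.
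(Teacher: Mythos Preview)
Your proposal is correct and, once you abandon the Emden--Fowler detour and pivot to the linearization at $r=\infty$, it is essentially the paper's argument: extract the limit $\ell=\lim_{r\to\infty} rQ(r)$ from the radial Sobolev bound and the integrated ODE, then prove uniqueness at infinity given $\ell$. The paper carries out that uniqueness step by the concrete estimate $|rQ-rY|\lesssim r^{-(2m-2)}\sup_{\rho>r}|\rho(Q-Y)(\rho)|$ (obtained by integrating $\frac{d^2}{dr^2}(r(Q-Y))=r(Q^{2m+1}-Y^{2m+1})$ twice from infinity), which is exactly the contraction you anticipated, and then extends to all $r>1$ by Cauchy--Lipschitz.
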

\begin{proof}
 We first prove that there exists $\ell\in \R$ such that 
 \begin{equation}
\label{limQ}
\left|Q(r)-\frac{\ell}{r}\right|\lesssim \frac{1}{r^{2m-1}},\quad r\gg 1.
 \end{equation} 
 Indeed, we have $\frac{d^2}{dr^2}\left( rQ \right)=rQ^{2m+1}(r)$. Since by the radial Sobolev inequality \eqref{rad_Sob}, $|Q(r)|\lesssim 1/r^{1/2}$, we obtain that $\frac{d}{dr}(rQ)$ has a limit as $r\to \infty$. Using that 
 $\int_1^{\infty} \left|\frac{d}{dr}(rQ)\right|^2\,dr$ is finite, we see that this limit is $0$. Thus 
 \begin{equation}
  \label{T1}
  \frac{d}{dr}(rQ)=-\int_{r}^{\infty} \sigma Q^{2m+1}(\sigma)\,d\sigma.
 \end{equation} 
Combining with the radial Sobolev inequality, we obtain $\left|\frac{d}{dr}(rQ)\right|\lesssim \int_r^{\infty}\frac{1}{\sigma^{m-\frac 12}}\,d\sigma\lesssim \frac{1}{r^{m-\frac 32}}$. Since $m\geq 3$, we deduce that $rQ$ has a limit $\ell$. Plugging the estimate $|Q(r)|\lesssim 1/r$ into \eqref{T1} and integrating between $r$ and $\infty$, we obtain \eqref{limQ}.

If $\ell=0$, we let $Y(r)=0$ for $r>1$. If $\ell \neq 0$, we let $\alpha=|\ell|^{\frac{m}{1-m}}$, $\iota$ be the sign of $\ell$, and 
$$ Y(r)=\iota \alpha^{\frac{1}{m}} Z_1(\alpha r)$$
One can check
$$\lim_{r\to\infty} rY(r)=\ell.$$
We will prove that $Q\equiv Y$. Indeed, for large $r$
$$\left| \frac{d^2}{dr^2}(rQ-rY)\right|=r\left|Q^{2m+1}-Y^{2m+1}\right|\lesssim \frac{1}{r^{2m-1}}|Q(r)-Y(r)|.$$
Integrating twice, we deduce
\begin{multline*}
\left|rQ(r)-rY(r)\right|\lesssim \int_{r}^{+\infty}\int_{\sigma}^{+\infty} \rho^{-2m}\rho |Q(\rho)-Y(\rho)|\,d\rho\,d\sigma\\
\lesssim \frac{1}{r^{2m-2}}\sup_{\rho >r} \left| \rho\left( Q(\rho)-Y(\rho) \right) \right|.
\end{multline*}
Taking the supremum over all $r>R$, where $R\gg 1$ is fixed, we obtain that $Q(r)=Y(r)$ for large $r$. By classical ODE theory, we deduce that $Y(r)=Q(r)$ for all $r>1$.
 \end{proof}
\begin{remark}
\label{R:stationary_defocusing}
One can prove that the only stationary solution of the defocusing analog of \eqref{eq:NLW} (that is, with a minus sign in front of the nonlinearity) is $0$. More precisely, similarly to Proposition \ref{prop:DKM-stationary}  there is, for all $\ell \in \R\setminus \{0\}$, a solution $Z_{\ell}$ of the elliptic wave equation defined for large $r$ behaving as $\ell/r$ at infinity. However in this case, the solution $Z_{\ell}$ has a constant sign and is defined only for $r\in (R_{\ell},+\infty)$, for some minimal radius of existence $R_{\ell}>0$ that satisfies $\lim_{r\to R_{\ell}} |Z(r)|=\infty$ (see \cite[Proposition 2.3]{DuyckaertsRoy17}).  
\end{remark}

\begin{proposition}
	\label{prop:minimizer}
	For any radial $f\in \dot H^{1}_{0}(\Omega)$, we have
	\begin{equation}
	\label{eq:minimizer}
	\|f\|_{L^{2(m+1)}(\Omega)}\|\nabla Q_{0}\|_{L^{2}(\Omega)}
	\leq \|Q_{0}\|_{L^{2(m+1)}(\Omega)}\|\nabla f\|_{L^{2}(\Omega)}
	\end{equation}
	Furthermore, the equality is achieved in \eqref{eq:minimizer} if and only if there exists $\sigma \in \R$ such that $f=\sigma Q_0$.
\end{proposition}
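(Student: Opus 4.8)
The plan is to identify $Q_0$, up to multiplication by a real scalar, as the unique minimizer of the scale--invariant Rayleigh quotient $J(f):=\|\nabla f\|_{L^2(\Omega)}/\|f\|_{L^{2(m+1)}(\Omega)}$ over radial $f\in \dot H^1_0(\Omega)\setminus\{0\}$, and then to read off \eqref{eq:minimizer} from this. Indeed, \eqref{eq:minimizer} is exactly the assertion $J(f)\geq J(Q_0)$, and the equality case is the assertion that $f$ is a scalar multiple of $Q_0$. Write $C_*:=\inf\{J(f)^2:\ f\in\dot H^1_{\rm rad}(\Omega)\setminus\{0\}\}$; note that $2(m+1)>6$ since $m>2$, and that $C_*>0$ by the radial Sobolev inequality \eqref{rad_Sob} together with $\|f\|_{L^6(\Omega)}\lesssim\|\nabla f\|_{L^2}$.

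First I would show the infimum is attained. Let $(f_n)_n$ be a minimizing sequence with $\|f_n\|_{L^{2(m+1)}(\Omega)}=1$; then $(f_n)_n$ is bounded in $\dot H^1_{\rm rad}(\Omega)$, so after extracting a subsequence $f_n\rightharpoonup f$ weakly. Since $2(m+1)>6$, the embedding $\dot H^1_{\rm rad}(\Omega)\hookrightarrow L^{2(m+1)}(\Omega)$ is compact (the remark after \eqref{rad_Sob}), hence $f_n\to f$ strongly in $L^{2(m+1)}(\Omega)$; in particular $\|f\|_{L^{2(m+1)}}=1$, so $f\neq 0$. Weak lower semicontinuity of the $\dot H^1$ norm gives $\|\nabla f\|_{L^2}^2\leq\liminf_n\|\nabla f_n\|_{L^2}^2=C_*$, so $f$ is a minimizer.

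Next I would use the Euler--Lagrange equation. A minimizer $f$ with $\|f\|_{L^{2(m+1)}}=1$ satisfies $-\Delta f=C_*\,|f|^{2m}f$ weakly in $\Omega$ with $f|_{\partial\Omega}=0$, the Lagrange multiplier being $C_*$ by testing against $f$ and using $\|\nabla f\|_{L^2}^2=C_*$. By elliptic regularity $f$ is a classical radial solution, and the rescaling $g:=C_*^{1/(2m)}f$ is a radial $\dot H^1(\Omega)$ solution of $-\Delta g=|g|^{2m}g$ with $g(1)=0$, so Lemma \ref{L:uniqueness} forces $g=\pm Q_j$ for some $j\geq 0$. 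Testing this equation against $g$ gives $\|\nabla g\|_{L^2}^2=\|g\|_{L^{2(m+1)}}^{2(m+1)}$, so that $J(g)^2=\|g\|_{L^{2(m+1)}}^{2m}$ and $E(g)=\frac{m}{2(m+1)}\|g\|_{L^{2(m+1)}}^{2(m+1)}$; hence $J(Q_j)^2=\bigl(\frac{2(m+1)}{m}E(Q_j)\bigr)^{m/(m+1)}$ is strictly increasing in $j$, because $E(Q_j)$ is, by \eqref{eq:scaled-energy} and \eqref{hgnti}. Since $J$ is scale invariant, $J(Q_j)^2=J(g)^2=J(f)^2=C_*$, and combining with $C_*\leq J(Q_0)^2\leq J(Q_j)^2$ we get equality throughout; by strict monotonicity $j=0$ and $J(Q_0)^2=C_*$, which is exactly \eqref{eq:minimizer}.

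Finally, for the equality case, suppose $f\neq 0$ achieves equality in \eqref{eq:minimizer}; by homogeneity we normalize $\|f\|_{L^{2(m+1)}}=1$, so $f$ is a minimizer, and the previous step gives $C_*^{1/(2m)}f=\pm Q_j$ with $J(Q_j)^2=C_*=J(Q_0)^2$, forcing $j=0$; thus $f=\pm C_*^{-1/(2m)}Q_0$ is a scalar multiple of $Q_0$. Conversely every $f=\sigma Q_0$ gives equality, since $J(\sigma f)=J(f)$. The main point requiring care is not the existence of a minimizer, which is immediate from the compact radial embedding, but rather the bookkeeping that follows: checking that the $\dot H^1_0$ minimizer is a bona fide classical solution satisfying the Dirichlet condition so that Lemma \ref{L:uniqueness} applies, and pinning down the index $j=0$ through the strict monotonicity of $E(Q_j)$.
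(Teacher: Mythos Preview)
Your proof is correct and follows essentially the same Weinstein-type variational argument as the paper: existence of a minimizer via the compact embedding $\dot H^1_{\rm rad}(\Omega)\hookrightarrow L^{2(m+1)}(\Omega)$, then the Euler--Lagrange equation, then identification via Lemma~\ref{L:uniqueness}. The one genuine difference is in the final identification step. The paper replaces each $f_\nu$ by $|f_\nu|$ at the outset, so the minimizer is automatically nonnegative; Lemma~\ref{L:uniqueness} then gives $Q_0$ directly, since $Q_0$ is the unique nonnegative nonzero stationary solution. You skip the absolute-value trick, so Lemma~\ref{L:uniqueness} only yields $g=\pm Q_j$ for some $j\ge 0$, and you close the gap by observing that $J(Q_j)^2=\bigl(\tfrac{2(m+1)}{m}E(Q_j)\bigr)^{m/(m+1)}$ is strictly increasing in $j$. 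Both routes are short; the paper's is marginally quicker, while yours has the mild advantage of making explicit why the higher excited states $Q_j$, $j\ge 1$, cannot be minimizers.
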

\begin{proof}
	%If $f(x)= 0$ a.e. $x\in \Omega$, then \eqref{eq:minimizer} is obvious. Otherwise, 
	It suffices to show that if we set
	\[
	J(f)=\|\nabla f\|_{L^{2}(\Omega)}^{2(m+1)}\,/\,\|f\|_{L^{2(m+1)}(\Omega)}^{2(m+1)}, 
	\]
	and
	\[
	a=\inf\{ J(f):f\in \dot H^{1}_{0}(\Omega)\setminus \{0\}, f \;\text{radial}\},
	\]
	then $a=J(Q_{0})$. Notice that
	from radial Sobolev inequality, we have $0< a<+\infty$
	and hence the above two quantities are well-defined.
	\medskip
	
	The argument is reminiscent of \cite{Weinstein82}.
	Take a minimizing sequence $f_{\nu}\in \dot H^{1}_{0}(\Omega)$ which are radial such that $J(f_{\nu})\rightarrow a$ as $\nu\rightarrow+\infty$. Since $f_{\nu}$ is real valued, we may assume (replacing $f_{\nu}$ by $|f_{\nu}|$ if necessary), that$f_{\nu}$ is nonnegative. Setting $\varphi_{\nu}=f_{\nu}/\|f_{\nu}\|_{\dot H^{1}_{0}(\Omega)}$, we have $J(\varphi_{\nu})=J(f_{\nu})$ and
	$\|\nabla \varphi_{\nu}\|_{L^{2}(\Omega)}=1$. Hence there
	exists a subsequence $\varphi_{\nu_{k}}$ converges 
	weakly in $\dot H^{1}$ to $\varphi_{*}$ as $k\rightarrow +\infty$
	with $\|\varphi_{*}\|_{\dot H^{1}_{0}(\Omega)}\leq 1$.
	By using the radial Sobolev inequality and the Rellich-Kondrachov theorem, 
	one can show $\varphi_{\nu_{k}}$
	converges to $\varphi_{*}$ strongly in $L^{2(m+1)}(\Omega)$.
	As a consequence, $\varphi_{*}\neq 0$ since otherwise we would have $J(\varphi_{\nu_{k}})\rightarrow+\infty$ by the strong convergence. It follows from the above discussion that 
	\[
	a\leq J(\varphi_{*})\leq \frac{1}{\|\varphi_{*}\|_{L^{2(m+1)}(\Omega)}^{2(m+1)}}=\lim_{k\rightarrow+\infty}\frac{1}{\|\varphi_{\nu_{k}}\|_{L^{2(m+1)}(\Omega)}^{2(m+1)}}=a\,.
	\]
	Thus $J(\varphi_{*})=a$ and $\|\nabla \varphi_{*}\|_{L^{2}(\Omega)}=1$, which along with the weak convergence implies
	$\varphi_{\nu_{k}}\rightarrow \varphi_{*}$ in $\dot H^{1}_{0}(\Omega)$ strongly as $k\rightarrow+\infty$.
	\medskip
	
	It follows from the above facts that $\varphi_{*}$ is the 
	minimizer of the function $J$ and satisfies the 
	Euler-Lagrange equation:
	\[
	\frac{d}{d\eps}\Bigg|_{\eps=0}J(\varphi_{*}+\eps \eta)=0\,,\quad 
	\forall\;\eta\in C_{0}^{\infty}(\Omega).
	\]
	Taking $\|\nabla \varphi_{*}\|_{L^{2}(\Omega)}=1$  into account, we have
	\[
	-\Delta  \varphi_{*}=\frac{1}{\|\varphi_{*}\|_{2(m+1)}^{2(m+1)}}|\varphi_{*}|^{2m}\varphi_{*}.
	\]
	Let $\varphi_{*}(x)=\|\varphi_{*}\|_{L^{2(m+1)}}^{(m+1)/m}Q(x)$.
	Then we have $-\Delta Q=|Q|^{p-1}Q$ on $\Omega$ and $Q|_{\partial \Omega}=0$, $Q(x)\geq 0$ for $x\in \Omega$.
	By uniqueness of the solution for the problem \eqref{eq:Q} (Lemma \ref{L:uniqueness}), we have $Q(x)=Q_{0}(x)$. 
	
	Note that the last part of the argument above shows that any minimizer for $J$ is proportional to $Q_0$, which concludes the proof of the proposition.
\end{proof}

\section{Classification of global solutions}
\label{S:classification}
\subsection{Rigidity}
We prove here the following rigidity result:
\begin{proposition}
\label{P:rigidity}
 Let $\rho_0>1$ and $u$ be a solution of the nonlinear wave equation \eqref{eq:NLW} on $\{|x|>\rho_0+|t|\}$. Assume
 \begin{equation}
  \label{no_channel}
  \sum_{\pm} \lim_{t\to\pm\infty} \int_{\{|x|\geq |t|+\rho_0\}}|\nabla_{t,x}u(t,x)|^2\,dx=0.
 \end{equation} 
 Then $(u_0,u_1)(r)=0$ for almost all $r>\rho_0$, or there exists $\ell\in \R\setminus\{0\}$, $\iota\in \{\pm \}$ such that $(u_0,u_1)(r)=(\iota Z_{\ell}(r),0)$ for all $r>\rho_0$, where $Z_{\ell}$ is defined in Proposition \ref{prop:DKM-stationary}.
\end{proposition}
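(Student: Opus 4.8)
The plan is to use the channels of energy method to show that the exterior asymptotic data must coincide with a stationary solution. First I would extract from the hypothesis \eqref{no_channel} the precise consequence for the linear profiles: by the exterior energy estimate of Lemma \ref{L:exterior_energy} (applied outside light cones rather than the full exterior) combined with a profile decomposition for the sequence of time translates of $u$, the vanishing of the channels of energy forces the nonlinear solution to be, asymptotically in both time directions, exactly equal to a free solution plus an error going to zero in the energy norm outside $\{|x|>|t|+\rho_0\}$. The key linear input is the following quantitative statement, which should be proved by working with the representation \eqref{exp_u_lin}: for a radial solution $v$ of \eqref{eq:LW} (or of the free wave equation on an exterior region), there exists a constant such that for every $R\geq\rho_0$,
$$\sum_{\pm}\lim_{t\to\pm\infty}\int_{|x|>R+|t|}|\nabla_{t,x}v(t,x)|^2\,dx\gtrsim \int_{|x|>R}\left|\nabla_{t,x}v|_{t=0}\right|^2\,dx - (\text{a piece concentrated near }|x|=R),$$
so that when the left side vanishes for all $R$, the initial data of $v$ must be trivial in the exterior. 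This is the analog of the non-radiating estimates in \cite{DuKeMe11a,DuKeMe13}, and it is where the absence of scaling invariance makes things cleaner.

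Next I would set up the contradiction/compactness argument proper. Suppose $(u_0,u_1)$ restricted to $r>\rho_0$ is not identically $0$; I want to show it is $(\iota Z_\ell,0)$. Consider the sequence $u(t_n,\cdot)$ for $t_n\to+\infty$ (and separately $t_n\to-\infty$). Since the solution exists globally outside the cone and its exterior energy is bounded, we may pass to a profile decomposition as in Proposition \ref{P:profile}, adapted to the exterior region. The vanishing channels hypothesis, together with the linear exterior energy lower bound above, kills every profile whose time parameter goes to $\pm\infty$; so the only surviving profile is the one with bounded (say zero) time translation, whose nonlinear evolution is governed by \eqref{eq:NLW} itself. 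Using Proposition \ref{P:approx_profile} and the blow-up/scattering criteria for \eqref{NLW_cone}, I would argue that this forces $u$ to be compact (up to the exterior region) — more precisely that the trajectory $t\mapsto \vec u(t)\big|_{|x|>\rho_0+|t|}$ is pre-compact in $\dot H^1\times L^2$ of the exterior.

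Then I would exploit this compactness together with the virial/Morawetz-type identity for \eqref{eq:NLW} to conclude that $\partial_t u\equiv 0$ on the exterior region, i.e. that $u$ is stationary there: $\Delta u_0 + |u_0|^{2m}u_0 = 0$ for $r>\rho_0$. At that point Lemma \ref{L:uniqueness} (in its local form, for solutions of the elliptic equation defined only for $r>\rho_0$) identifies $u_0$ with either $0$ or $\pm Z_\ell$ for some $\ell\neq 0$, and the decay \eqref{eq:decay-infty}, \eqref{eq:derivative-infty} is automatic; finally one checks $u_1 = \partial_t u|_{t=0} = 0$. The main obstacle I anticipate is the first step — establishing the sharp linear exterior energy lower bound outside arbitrary cones $\{|x|>R+|t|\}$ with a constant uniform in $R$, and propagating it through the profile decomposition so that \emph{all} the nontrivial profiles are eliminated; the subtlety is the boundary term near $|x|=R$ and the interaction with the Dirichlet condition at $|x|=1$, which must be handled via the explicit formula \eqref{exp_u_lin_2} and a careful choice of which radii $R$ to test against. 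The virial step should be comparatively routine once compactness is in hand, since \eqref{eq:NLW} has no scaling and the exterior region $\{|x|>\rho_0+|t|\}$ is well-adapted to finite-speed-of-propagation arguments.
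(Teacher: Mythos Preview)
Your plan diverges substantially from the paper's proof, and I think it has a structural gap. The solution $u$ is only defined on the exterior cone $\{|x|>\rho_0+|t|\}$, so the ``trajectory'' $t\mapsto\vec u(t)$ does not live in a fixed function space: as $|t_n|\to\infty$ the domain of $u(t_n,\cdot)$ shrinks to the empty set. Applying a profile decomposition to $\vec u(t_n)$ as in Proposition~\ref{P:profile} therefore does not make sense without an extension to all of $\Omega$, and any such extension destroys the information you need. Likewise, a virial/Morawetz identity on the exterior cone will produce boundary flux terms on $\{|x|=\rho_0+|t|\}$ that you have no control over; the usual mechanism by which compactness plus virial forces $\partial_t u\equiv 0$ relies on integrating over a \emph{fixed} spatial domain for long times, which is exactly what is unavailable here. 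Finally, even the linear input you sketch---an exterior-energy lower bound ``outside arbitrary cones $\{|x|>R+|t|\}$ with constant uniform in $R$''---is already the full content of Lemma~\ref{L:exterior_energy}, and it is not a small piece of the argument but the whole engine.

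The paper bypasses all of this by working entirely at the level of the initial data, with no profile decomposition and no virial. The idea (following \cite[Section~2]{DuKeMe13}) is: for each $R\geq\rho_0$ with small exterior energy, combine Lemma~\ref{L:exterior_energy} with small-data perturbation theory to obtain the pointwise inequality
\[
\int_R^{\infty}\bigl((\partial_r(ru_0))^2+(ru_1)^2\bigr)\,dr\;\lesssim\;R^{-(2m+1)}\,(Ru_0(R))^{2(2m+1)}.
\]
Iterating this along dyadic radii shows that $v_0(R)=Ru_0(R)$ converges to some $\ell\in\R$ with rate $|v_0(R)-\ell|\lesssim R^{-m}$. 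One then subtracts the stationary solution $Z_\ell$ and repeats the same channel estimate for $h=u-Z_\ell$, now using that the linearized right-hand side $|Z_\ell|^{2m}|h|$ is small in the relevant Strichartz norm; this forces $h_0\equiv 0$ first for large $r$ and then, by a continuity argument, for all $r>\rho_0$. No compactness step is needed, and the proof never leaves $t=0$ except to invoke the (linear) exterior energy equality and the small-data bound on $u-u_L$.
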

\begin{remark}
\label{R:rigidity_defocusing}
 Let us mention that the analog of Proposition \ref{P:rigidity}, with the same proof, is also valid for the defocusing equation corresponding to \eqref{eq:NLW}. In this case, in view of Remark \ref{R:stationary_defocusing}, the conclusion is that the solution $u$ is identically $0$.
\end{remark}

\begin{proof}
 The proof follows the line of the analogous result for the energy-critical wave equation on $\R^3$ (see \cite[Section 2]{DuKeMe13}), with some of the arguments simplified.  

 \emph{Step 1: channels of energy.} We fix a small $\eps>0$, and let $R\geq \rho_0$ such that
 \begin{equation}
  \label{OO30} \int_{|x|\geq R} |\nabla u_0|^2+u_1^2\,dx\leq \eps,
 \end{equation} 
 and prove, letting $v_0(r)=ru_0(r)$, $v_1(r)=ru_1(r)$,
 \begin{equation}
  \label{OO43}
  \int_R^{+\infty} (\partial_rv_0)^2+v_1^2\,dr\lesssim \frac{1}{R^{2m+1}}v_0^{2(2m+1)}(R).
 \end{equation} 
 Let $u_L$ be the solution of the linear wave equation with initial data $(u_0,u_1)$. We have (see Lemma \ref{L:exterior_energy}):
 $$\int_{R}^{+\infty} (\partial_r(ru_0))^2+(r u_1)^2\,dr\leq \sum_{\pm} \lim_{t\to\pm\infty} \int_{R+|t|}^{+\infty} \big(\partial_r(r u_L(t,r))\big)^2+r^2\big(\partial_t u_L(t,r)\big)^2\,dr.$$
 Furthermore, by the small data theory,
 $$\sup_{t\in \R}\big\|\indic_{\{|x|> |t|+R\}}\left(\nabla_{t,x}u(t)-\nabla_{t,x}u_L(t)\right)\big\|_{L^2}\lesssim \big\|(\nabla u_0,u_1)\indic_{\{|x|>R\}}\big\|^{2m+1}_{L^2}.$$
By a straightforward integration by parts, we have, for any $f_0\in \dot{H}^1_0(\Omega)$, and 
 \begin{equation}
  \label{OO41}
  \int_A^{+\infty} \left(\partial_r(rf_0)\right)^2\,dr=\int_A^{+\infty} (\partial_rf_0)^2r^2\,dr-Af_0^2(A),
 \end{equation} 
which yields, using assumption \eqref{no_channel}, 
$$\lim_{t\to\pm\infty} \int_{R+|t|}^{+\infty} \big(\partial_r(r u(t,r))\big)^2+r^2\big(\partial_t u(t,r)\big)^2\,dr=0.$$
 Combining, we obtain
\begin{equation}
 \label{OO40}
 \int_R^{+\infty} \big(\partial_r(ru_0)\big)^2+(ru_1)^2\,dr\lesssim \left(\int_{R}^{+\infty}\left(\left(\partial_r u_0\right)^2+u_1^2\right)\,r^2dr\right)^{2m+1}.
 \end{equation} 
 Using the formula \eqref{OO41} again, and the smallness assumption \eqref{OO30}, we deduce
 \begin{equation}
  \label{OO42}
  \int_{R}^{+\infty}\left(\partial_r(ru_0)\right)^2+(ru_1)^2\,dr\lesssim R^{2m+1}u_0^{2(2m+1)}(R),
 \end{equation} 
 hence \eqref{OO43}.
 
 \emph{Step 2: limit of $r\,u_0$.} In this step we prove that $v_0(R)$ has a limit $\ell$ as $R\to\infty$ and that there exists a constant $K$ (depending on $v$), such that
 \begin{equation}
  \label{limit_v}
  \left|v_0(R)-\ell\right|\leq \frac{K}{R^m},\quad \int_R^{+\infty} v_1^2(r)\,dr\leq \frac{K}{R^{2m+1}}.
 \end{equation} 
 Until the end of the proof, we will always denote by $K$ a large constant \emph{depending on $v$}, that may change from line to line. 
  
 We first fix $R,R'$ such that $\rho_0<R<R'<2R$ and the smallness assumption \eqref{OO30} is satisfied. Then
 $$|v_0(R)-v_0(R')|\lesssim\int_R^{R'} |\partial_r v_0(r)|\,dr\lesssim \sqrt{R}\sqrt{\int_R^{+\infty} (\partial_rv_0(r))^2\,dr}.$$
 Using Step 1, we deduce
 \begin{equation}
  \label{OO50}
|v_0(R)-v_0(R')|\lesssim \frac{1}{R^m}|v_0(R)|^{2m+1}.
 \end{equation} 
 By \eqref{OO30} and the integration by parts formula \eqref{OO41}, we have $\frac{1}{\sqrt{R}}|v_0(R)|\leq \sqrt{\eps}$, and thus
 \begin{equation}
  \label{OO51}
  |v_0(R)-v_0(R')|\lesssim \eps^m |v_0(R)|.
 \end{equation}
 By an easy induction, we deduce that for all $k\geq 0$, 
 $$ |v_0(2^k\rho_0)|\lesssim (1+C\eps^m)^k |v_0(\rho_0)|\leq K(1+C\eps^m)^k.$$
%  and thus, choosing $\eps$ arbitraily small, fixing a large $R$, and using \eqref{OO51} to control $v_0(r)$ when $r\in [2^kR,2^{k+1}R]$, 
%  $$|v_0(r)|\leq K r^{\frac{m-1/2}{2m+1}}.$$
Going back to \eqref{OO50}, we obtain
$$ |v_0(2^k\rho_0)-v_0(2^{k+1}\rho_0)|\leq K 2^{-km}(1+C\eps^m)^{k(2m+1)}. $$
Taking $\eps>0$ small, we see that this implies that the series 
$\sum_{k\geq 0} |v_0(2^k\rho_0)-v_0(2^{k+1}\rho_0)|$ converges, and thus that there exists $\ell\in \R$ such that 
$$\lim_{k\to\infty} v_0(2^k\rho_0)=\ell.$$
This implies that $v_0(2^k\rho_0)$ is bounded. Using \eqref{OO50} again we obtain
$$ |v_0(2^k\rho_0)-v_0(2^{k+1}\rho_0)|\leq 2^{-km} K, $$
and summing up:
$$ \left|v_0(2^k\rho_0)-\ell\right|\leq K 2^{-km}.$$
By \eqref{OO50}, if $2^k\rho_0\leq r\leq 2^{k+1}\rho_0$,
$$ \left|v_0(2^k\rho_0)-v_0(r)\right|\leq K 2^{-km},$$
which concludes the proof of the first bound in \eqref{limit_v}. The second bound follows from \eqref{OO43}

\emph{Step 3. Compact support of the difference with a stationay solution.}
If $\ell\neq 0$, we let $Z_{\ell}$ be the radial solution of $-\Delta Z_{\ell}=Z_{\ell}^{2m+1}$ such that
\begin{equation}
 \label{OO60'}
 \left|Z_{\ell}(r)-\frac{\ell}{r}\right|\leq \frac{K}{r^3}.
\end{equation} 
(see Proposition \ref{prop:DKM-stationary}). We define $Z_0$ as the zero function, so that \eqref{OO60'} is also satisfied in the case $\ell=0$. Our goal is to prove that $(u_0,u_1)=(Z_{\ell},0)$ for almost every $r>\rho_0$. In this step, we prove that this equality holds for large $r$.

We let $h(r)=u-Z_{\ell}$, so that the following equation is satisfied for $r>\rho+|t|$
\begin{equation}
 \label{OO60}
 \partial_t^2h-\Delta h=(Z_{\ell}+h)^{2m+1}-Z_{\ell}^{2m+1}.
\end{equation} 
We let $(h_0,h_1)(r)=\vec{h}(0,r)$, and $h_L$ be the solution of the linear wave equation on $\{|x|>\rho_0+|t|\}$ with initial data $(h_0,h_1)$ at $t=0$. 

Let $R>\rho_0$ such that 
\begin{equation}
 \label{OO61}
 \sqrt{\int_R^{+\infty} \left( (\partial_r h_0)^2 +h_1^2 \right)r^2\,dr}+\left\| Z_{\ell}\indic_{\{r\geq R+|t|\}}\right\|_{L^{2m+1}_tL^{2(2m+1)}_x}\leq \eps,
\end{equation} 
where the small constant $\eps>0$ is to be specified later. Note that for any $\eps>0$, \eqref{OO61} is satisfied for large $R$. By the equation \eqref{OO60}, finite speed of propagation and Strichartz/energy estimates, for all interval $I$ containing $0$,
\begin{multline}
\label{OO70'}
 \sup_{t\in I}\left\| \left(\nabla_{t,x}h(t)-\nabla_{t,x}h_L(t)\right)\indic_{\{|x|>R+|t|\}}\right\|_{L^2_x}\\+\left\|(h-h_L)\indic_{\{|x>R+|t|\}}\right\|_{L^{2m+1}_t(I,L^{2(2m+1)})}\\
 \lesssim \left\|\indic_{\{|x>R+|t|\}} |Z_{\ell}|^{2m} |h|\right\|_{L^1_t(I,L^2_x)}+\left\|\indic_{\{|x>R+|t|\}} |h|^{2m+1}\right\|_{L^1_t(I,L^2_x)}, 
\end{multline} 
and thus, by H\"older's inequality, and the bound of the norm of $Z_{\ell}$ in \eqref{OO61}, we deduce
\begin{multline}
\label{OO70}
 \sup_{t\in I}\left\| \left(\nabla_{t,x}h(t)-\nabla_{t,x}h_L(t)\right)\indic_{\{|x|>R+|t|\}}\right\|_{L^2_x}\\
 +\left\|(h-h_L)\indic_{\{|x>R+|t|\}}\right\|_{L^{2m+1}_t(I,L^{2(2m+1)})}\\
 \lesssim \eps^{2m}\left\|\indic_{\{|x>R+|t|\}} h\right\|_{L^{2m+1}_t\left(I,L^{2(2m+1)}_x\right)}+\left\|\indic_{\{|x>R+|t|\}} h\right\|^{2m+1}_{L^{2m+1}_t\left(I,L^{2(2m+1)}_x\right)}.
 \end{multline} 
 Combining with the smallness assumption on $h$ in \eqref{OO61}, we deduce
 \begin{equation*}
  \sup_{t\in I}\left\| \left(\nabla_{t,x}h(t)-\nabla_{t,x}h_L(t)\right)\indic_{\{|x|>R+|t|\}}\right\|_{L^2_x}\lesssim \eps^{2m}\|(\nabla h_0,h_1)\indic_{\{|x|>R\}}\|_{L^2_x}.
 \end{equation*} 
 By the same argument as in Step 1, we obtain 
 \begin{equation}
  \label{OO71}
  \int_R^{+\infty} \left(\left( \partial_r(rh_0) \right)^2+r^2h_1^2(r)\right)\,dr\lesssim \eps^{4m}R h_0^2(R).
 \end{equation} 
 Arguing as in Step 2, we deduce that for $R<R'<2R$, if \eqref{OO61} holds, one has
 $$|g_0(R)-g_0(R')|\lesssim \eps^{2m}|g_0(R)|,$$
 where $g_0(R)=Rh_0(R)$.
 By a straightforward induction argument, we deduce
 \begin{equation}
  \label{OO80}
  |g_0(R)|\lesssim \frac{1}{\left( 1-C\eps^{2m} \right)^k} \left|g_0\left( 2^kR\right)\right|.
 \end{equation} 
 However, by Step 2 and \eqref{OO60'}, there exists a constant $K$ such that 
 $$ |g_0(2^kR)|\leq \frac{K}{(2^kR)^{2}}.$$
 Taking $\eps$ small, so that $1-C\eps^{2m}>\frac{1}{2}$, we deduce from \eqref{OO80} that $Rh_0(R)=g_0(R)=0$,  if \eqref{OO61} is satisfied, that is for large $R$. Going back to \eqref{OO71} we obtain that $h_1(R)=0$ for almost all large $R$. This concludes this step noting that $(h_0,h_1)=(u_0,u_1)-(Z_{\ell},0)$.
 
 \emph{Step 4. End of the proof.} We next prove that $(u_0,u_1)=(Z_{\ell},0)$ for almost every $r>\rho_0$. We let 
 $$\rho_1=\inf \left\{ \rho>\rho_0\;\Big|\; \int_{\rho}^{+\infty} \left((\partial_rh_0)^2+h_1^2\right)r^2\,dr=0\right\}.$$
 We must prove that $\rho_1=\rho_0$. We argue by contradiction, assuming that $\rho_1>\rho_0$. We thus can choose $R$ such that $\rho_0<R<\rho_1$ and
\begin{equation}
 \label{OO81}
 \int_R^{+\infty} \left( (\partial_r h_0)^2 +h_1^2 \right)r^2\,dr+\left\| Z_{\ell}\indic_{\{R+|t|\leq r\leq \rho_1+|t|\}}\right\|_{L^{2m+1}_tL^{2(2m+1)}_x}\leq \eps.
\end{equation} 
 By finite speed of propagation and the definition of $\rho_1$, $r\leq \rho_1+|t|$ on the support of $h$. As a consequence, we see that the argument of Step 3 is still valid, replacing $\indic_{\{r\geq R+|t|\}}$ by $\indic_{\{R+|t|\leq r\leq \rho_1+|t|\}}$ in \eqref{OO70'}. In particular, $h_0(R)=0$, and \eqref{OO71} holds for this choice of $R$. This implies
 $$\int_{R}^{+\infty} \left((\partial_rh_0)^2+h_1^2\right)r^2\,dr=0,$$
 contradicting the definition of $\rho_1$.
\end{proof}
\subsection{Boundedness along a sequence of times}
\begin{lemma}
\label{L:bnd}
 Let $u$ be a solution of \eqref{eq:NLW} such that $T_+(u)=+\infty$. Then 
 $$\liminf_{t\to+\infty} \int_{\Omega} |\nabla u|^2+(\partial_tu)^2\,dx\leq \frac{4(m+1)}{2m}E(u_0,u_1).$$
 In particular, $E(u_0,u_1)>0$ or $(u_0,u_1)=0$.
\end{lemma}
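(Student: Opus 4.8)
The plan is a truncated virial/concavity argument in the spirit of Levine and Payne--Sattinger, with the cut-off following the light cone so that every error term is absorbed by the (small) exterior energy. Fix a small $\eps>0$, choose $R_0\geq1$ so large that $\int_{|x|>R_0}(|\nabla u_0|^2+u_1^2)\,dx\leq\eps$, pick $\chi\in C^\infty([0,\infty))$ with $\chi\equiv1$ on $[0,1]$, $\chi\equiv0$ on $[2,\infty)$, $\chi'\leq0$, set $\Theta(t,x)=\chi\!\left(\frac{|x|}{R_0+t}\right)$, and define
$$z(t)=\int_\Omega\Theta(t,x)\,u(t,x)^2\,dx.$$
Then $z$ is finite (compact support of $\Theta(t,\cdot)$) and $0\le z(t)\lesssim(R_0+t)^2\|\vec u(t)\|_{\HHH}^2$ by Hardy's inequality. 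Moreover, by finite speed of propagation, on $\{|x|>R_0+t\}$ the solution coincides with the (global, small-data) solution of \eqref{eq:NLW} issued from $(u_0,u_1)$ smoothly truncated to $\{|x|>R_0/2\}$, whose $\HHH$-norm is $\lesssim\eps$; hence by Proposition \ref{pooiu} and the radial Sobolev inequality \eqref{rad_Sob},
$$\sup_{t\geq0}\int_{|x|>R_0+t}\bigl(|\nabla u(t)|^2+(\partial_tu(t))^2\bigr)dx\lesssim\eps^2,\qquad |u(t,r)|\lesssim\eps\,r^{-1/2}\quad(r>R_0+t).$$

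Next I would compute $z''$, rigorously first for data in $(\dot H^2\cap\dot H^1_0)\times\dot H^1_0$ using Proposition \ref{pooiu}(ii) and then by density. Using \eqref{eq:NLW} and integrating by parts (no boundary term, since $u|_{\partial\Omega}=0$ and $\Theta\equiv1$ near $\partial\Omega$),
$$z''=2\!\int\!\Theta(\partial_tu)^2-2\!\int\!\Theta|\nabla u|^2+2\!\int\!\Theta|u|^{2(m+1)}+\int\!\Delta\Theta\,u^2+4\!\int\!\partial_t\Theta\,u\,\partial_tu+\int\!\partial_t^2\Theta\,u^2.$$
Since $\partial_t\Theta,\partial_t^2\Theta,\Delta\Theta$ are supported in $\{R_0+t<|x|<2(R_0+t)\}$ with sizes $O((R_0+t)^{-1})$ and $O((R_0+t)^{-2})$, and $u,\partial_tu$ are small there by the display above, the last three terms of $z''$ are $O(\eps^2)$ \emph{uniformly in $t\geq0$}; replacing $\int\Theta(\cdot)$ by $\int(\cdot)$ in the first three terms costs another $O(\eps^2)$, and inserting $\int|u|^{2(m+1)}=(m+1)\|\vec u(t)\|_{\HHH}^2-2(m+1)E$ (conservation of \eqref{eq:energy}) gives
$$z''(t)\geq 2m\,\|\vec u(t)\|_{\HHH}^2-4(m+1)E(u_0,u_1)-C\eps^2,$$
with $C$ absolute; more precisely, for $\nu>0$ small, $z''(t)\geq(4+\nu)\int\Theta(\partial_tu)^2+(2m-\nu)\|\partial_tu\|_{L^2}^2+2m\|\nabla u\|_{L^2}^2-4(m+1)E-C\eps^2$.

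Now suppose, for contradiction, that $\liminf_{t\to\infty}\|\vec u(t)\|_{\HHH}^2>\frac{4(m+1)}{2m}E=\frac{2(m+1)}{m}E$, so $\|\vec u(t)\|_{\HHH}^2\geq\frac{2(m+1)}{m}E+4\delta$ for $t\geq T_0$, some $\delta>0$. Choosing $\nu$ small (depending on $\delta,E,m$), then $\eps$ small, then $R_0$ large as above, the last two displays give $z''(t)\geq4m\delta>0$ for $t\geq T_0$, so $z$ is convex and $z(t)\geq c\delta\,t^2$ for $t$ large; they also give $D(t):=z''(t)-(4+\nu)\int\Theta(\partial_tu)^2\geq c'\delta>0$. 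On the other hand $z'=2\int\Theta u\,\partial_tu+\int\partial_t\Theta\,u^2$ with $|\int\partial_t\Theta\,u^2|\lesssim\eps^2(R_0+t)$, so Cauchy--Schwarz yields $(z')^2\leq4(1+\mu)\,z\int\Theta(\partial_tu)^2+C_\mu\eps^4(R_0+t)^2$ for a small $\mu$ tied to $\nu$; choosing $\theta,\mu$ with $4(1+\theta)(1+\mu)\leq4+\nu$,
$$z(t)z''(t)-(1+\theta)z'(t)^2\geq z(t)D(t)-C_\mu\eps^4(R_0+t)^2\gtrsim\delta^2t^2-\eps^4(R_0+t)^2\geq0$$
for $t$ large, provided $\eps$ was taken small enough ($\mu$ depends only on $\nu$, hence only on $\delta,E,m$). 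Thus $(z^{-\theta})''=-\theta z^{-\theta-2}\bigl(zz''-(1+\theta)(z')^2\bigr)\leq0$ for $t$ large: $z^{-\theta}$ is positive and concave on a half-line, yet $z^{-\theta}(t)\leq(c\delta t^2)^{-\theta}\to0$, which is impossible (a positive concave function on a half-line is bounded below by a positive constant). This contradiction proves the $\liminf$ bound (equivalently, $z$ would blow up in finite time, contradicting $T_+(u)=+\infty$). For the last assertion: if $E(u_0,u_1)\leq0$ then $\frac{4(m+1)}{2m}E\leq0$, so $\liminf_{t\to\infty}\|\vec u(t)\|_{\HHH}^2=0$; taking $t_n\to\infty$ with $\|\vec u(t_n)\|_{\HHH}\to0$, for $n$ large $\vec u(t_n)$ lies below the small-data threshold, so by Proposition \ref{pooiu} the solution is global in both time directions with $\sup_{s\in\R}\|\vec u(s)\|_{\HHH}\lesssim\|\vec u(t_n)\|_{\HHH}\to0$, i.e.\ $(u_0,u_1)=0$.

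I expect the main obstacle to be the bookkeeping of the cut-off errors: one must let $\Theta$ grow at exactly light-cone speed so that $\partial_t\Theta,\Delta\Theta$ live in the region where the exterior energy is small, and then notice that the single error that is \emph{not} uniformly $O(\eps^2)$ --- the term $(\int\partial_t\Theta\,u^2)^2\sim\eps^4t^2$ --- is nevertheless beaten by $z\,z''\gtrsim\delta^2t^2$, the quadratic lower bound for $z$ being forced by its convexity.
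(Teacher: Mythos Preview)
Your argument is correct and follows the same Levine--Payne--Sattinger concavity scheme as the paper: a truncated $L^2$ quantity, the identity $z''=(2m+4)\int(\partial_tu)^2+2m\int|\nabla u|^2-4(m+1)E+\text{error}$, and the resulting differential inequality $zz''\geq(1+\theta)(z')^2$. The only real difference is the cut-off scale. The paper takes $y(t)=\int\varphi(x/t)u^2$ with $\varphi\equiv1$ on $\{|x|\leq2t\}$ and $\equiv0$ on $\{|x|\geq3t\}$; since the transition zone sits strictly outside the light cone, the small-data exterior energy there is $o(1)$ as $t\to\infty$, so the errors in $y'$ and $y''$ are $o(t)$ and $o(1)$ respectively, and no $\eps$-$\delta$ ordering is needed. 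Your cut-off hugs the light cone ($R_0+t<|x|<2(R_0+t)$), which forces you to fix $\eps$ \emph{after} $\delta$ so that the uniform $O(\eps^2)$ and $O(\eps^4t^2)$ errors are beaten by $z''\geq c\delta$ and $zD(t)\gtrsim\delta^2t^2$; you handle this correctly. One small slip: you set $\int_{|x|>R_0}(|\nabla u_0|^2+u_1^2)\leq\eps$ but then quote $\lesssim\eps^2$ for the propagated exterior energy and $|u|\lesssim\eps\,r^{-1/2}$; either change the initial choice to $\leq\eps^2$ or replace $\eps^2$ by $\eps$ downstream --- the argument is unaffected. Your treatment of the ``in particular'' clause via small data at time $t_n$ is also fine and matches the paper's intent.
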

\begin{proof}
 The proof is very close to the one of the analogous result in the energy-critical case without obstacle (see \cite[Prop 3.4]{DuKeMe13}). It uses a monotonicity formula that goes back to the work of Levine \cite{Levine74}. We argue by contradiction, assuming that $E(u_0,u_1)<0$, or that there exists $t_0>0$ such that for all $t\geq t_0$,
 \begin{equation}
  \label{OO90}(1-\eps_0)\left( \|\nabla u(t)\|^2_{L^2(\Omega)}+\|\partial_t u(t)\|^2_{L^2(\Omega)} \right)\geq \left( \frac{4(m+1)}{2m} \right)E(u_0,u_1)+\eps_0.
 \end{equation} 
We let $\varphi\in C^{\infty}(\R^3)$ be a radial function such that $\varphi(r)=1$ if $r\leq 2$ and $\varphi(r)=0$ if $r\geq 3$. We let 
$$ y(t)=\int_{\Omega} \varphi\left( \frac{x}{t} \right)u^2(t,x)\,dx.$$
We will prove that there exists $\gamma>1$ and $t_1\geq t_0$ such that
\begin{gather}
 \label{OO101}
 \forall t\geq t_1,\quad \gamma y'(t)^2\leq y(t)y''(t)\\
\label{OO102}
\forall t\geq t_1,\quad y'(t)>0,
 \end{gather} 
 yielding a contradiction by a standard ODE argument (see e.g. the end of the proof of Theorem 3.7 in \cite{KeMe08} for the details).

 Using the small data theory and finite speed of propagation, we obtain that 
 $$\lim_{t\to\infty} \int_{\{|x|>\frac{3}{2}|t|\}} |\nabla_{t,x}u|^2+\frac{1}{|x|^2}|u|^2+|u|^{2m+2}\,dx=0.$$
As a consequence, using also equation \eqref{eq:NLW} and integration by parts, we obtain, as $t\to\infty$:
\begin{gather}
 \label{OO103}
 y'(t)=2\int_{\left\{1\leq |x|\leq 2t\right\}} u\partial_tu\,dx +o(t)\\
 \label{OO104}
 y''(t)=2\int_{\Omega} (\partial_tu)^2-2\int_{\Omega} |\nabla u|^2 +2\int_{\Omega} |u|^{2m+2}+o(1).
\end{gather}
We can rewrite \eqref{OO104}:
\begin{equation}
 \label{OO105}
y''(t)=2m\int_{\Omega} |\nabla u|^2+(2m+4)\int_{\Omega} (\partial_tu)^2-4(m+1)E(u_0,u_1)+o(1).
 \end{equation} 
Using that $E(u_0,u_1)<0$ or that \eqref{OO90} holds, we deduce that there exists $\eps_0>0$ such that for large $t$, $y''(t)\geq \eps_0.$
This yields
$$ \liminf_{t\to\infty} \frac{1}{t}y'(t)\geq \eps_0.$$
In particular \eqref{OO102} holds. More precisely, for large $t$,
$\int_{\{1\leq |x|\leq 2t\}} u\partial_tu\geq \frac{\eps_0}{2}t$, and \eqref{OO103} implies 
$$y'(t)\leq (2+o(1))\int_{\{1\leq |x|\leq 2t\}} u\partial_tu.$$
By \eqref{OO105} and the fact that $E(u_0,u_1)$ is negative or that \eqref{OO90} holds for large $t$, we obtain that for large $t$,
$$y''(t)\geq 4\int (\partial_tu(t,x))^2\,dx.$$

Using Cauchy-Schwarz inequality, \eqref{OO104} and the definition of $y(t)$, we deduce \eqref{OO101}, which concludes the proof.
 \end{proof}
\subsection{Existence of a radiation term}
We next prove:
\begin{proposition}
 \label{P:radiation}
 Let $u$ be a radial solution of \eqref{eq:NLW} such that $T_+(u)=+\infty$. Then there exists a solution $v_L$ of the linear wave equation \eqref{eq:LW} such that
 \begin{equation}
  \label{radiation}
  \forall A\in \R
  ,\quad \lim_{t\to+\infty} \int_{|x|\geq A+|t|}\left|\nabla_{t,x}(u-v_L)\right|^2\,dx=0.
 \end{equation} 
\end{proposition}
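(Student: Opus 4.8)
The plan is to construct $v_L$ as a scattering limit of $u$ restricted to exterior cones, patching together the local pieces that Proposition~\ref{P:approx_profile}, Lemma~\ref{L:exterior_energy} and Lemma~\ref{L:asymptotic} provide. First I would introduce, for each $A\in\R$, the solution $u_{L,A}$ of the linear wave equation on $\Omega$ obtained from the exterior scattering of $u$ on $\{|x|>A+|t|\}$: the key input is that by finite speed of propagation and the small data theory, for $A$ large the truncated solution $u\indic_{\{|x|>A+|t|\}}$ has finite Strichartz norm, hence scatters to some $u_{L,A}$ with $\lim_{t\to\infty}\|\indic_{\{|x|>A+|t|\}}\nabla_{t,x}(u-u_{L,A})\|_{L^2}=0$. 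One then checks that these linear solutions are coherent as $A$ varies: if $A<A'$, the outgoing radiation profiles $G_+$ (in the sense of Lemma~\ref{L:asymptotic}) of $u_{L,A}$ and $u_{L,A'}$ agree on the half-line $(A'-t,\infty)$ shifted appropriately, i.e.\ they agree on the region swept by $\{|x|>A'+|t|\}$. This means the $G_+$'s patch together to a single $G_+\in L^2_{\mathrm{loc}}$.

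The main work is to show this patched $G_+$ is actually in $L^2(\R)$, i.e.\ that the exterior radiation does not carry infinite energy. Here I would invoke Lemma~\ref{L:bnd}: along a sequence $t_n\to\infty$, $\|\nabla_{t,x}u(t_n)\|_{L^2(\Omega)}^2$ stays bounded by $\frac{4(m+1)}{2m}E(u_0,u_1)$. Combined with the identity \eqref{enG+-} applied to each $u_{L,A}$ (which controls $\int_{\R}G_+^2$ by the energy of $u_{L,A}$ outside $\{|x|=A\}$), and with the fact that the energy of $u_{L,A}$ outside the shifted cone is asymptotically bounded by the full energy of $u$, one gets a uniform-in-$A$ bound $\int_{\R}(G_+^{(A)})^2\lesssim 1$. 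Letting $A\to-\infty$ (equivalently letting the truncation cone shrink toward the light cone from $K$) and using monotone convergence, $G_+\in L^2(\R)$. By the surjectivity statement in Lemma~\ref{L:asymptotic}, there is then a genuine radial solution $v_L$ of \eqref{eq:LW} whose outgoing profile is exactly $G_+$.

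Finally I would verify that this $v_L$ satisfies \eqref{radiation} for every $A$. Fix $A$; for $A'>A$ large enough that $u$ scatters on $\{|x|>A'+|t|\}$, we have $\lim_t\|\indic_{\{|x|>A'+|t|\}}\nabla_{t,x}(u-u_{L,A'})\|_{L^2}=0$ by construction, and $\lim_t\|\indic_{\{|x|>A'+|t|\}}\nabla_{t,x}(v_L-u_{L,A'})\|_{L^2}=0$ because $v_L$ and $u_{L,A'}$ have the same outgoing profile on that region (Lemma~\ref{L:asymptotic} and Lemma~\ref{L:exterior_energy} give that the exterior energy of a linear solution is computed from its profile). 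Hence \eqref{radiation} holds with $A$ replaced by any large $A'$. To get it for the given $A$, note that on the annular region $\{A+|t|<|x|<A'+|t|\}$ the solution $u$ is, for large $t$, a small-data exterior solution of the nonlinear equation (its energy there tends to $0$ by the exterior energy decay of the linear comparison plus Step~1-type estimates as in the proof of Proposition~\ref{P:rigidity}); more directly, one uses that \eqref{radiation} for one value of $A$ implies it for all values, since the difference of the two exterior integrals is $\int_{A+|t|<|x|<A'+|t|}|\nabla_{t,x}(u-v_L)|^2$, and both $u$ and $v_L$ have vanishing energy flux through that shrinking-proportion region as $t\to\infty$ — for $v_L$ by Lemma~\ref{L:exterior_energy} and dominated convergence, for $u$ by comparison with $u_{L,A'}$.

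\textbf{Main obstacle.} I expect the delicate point to be the coherence-and-summability of the profiles $G_+^{(A)}$ as the truncation cone is pushed toward the boundary light cone: one must be careful that no energy is lost or double-counted in the overlap regions, and that the bound from Lemma~\ref{L:bnd} (which is only along a subsequence of times and only controls the \emph{full} $\dot H^1\times L^2$ norm, not the exterior one) really does bound $\int_\R G_+^2$ uniformly in $A$. Making the passage $A\to-\infty$ rigorous, and checking that the resulting $v_L$ does not depend on the auxiliary choices, is the technical heart of the argument.
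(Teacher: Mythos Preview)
Your overall plan---scatter $u$ on exterior cones $\{|x|>A+|t|\}$, read off the outgoing radiation profile $G_+^{(A)}$ via Lemma~\ref{L:asymptotic}, patch the profiles into a single $G_+\in L^2(\R)$, and let $v_L$ be the linear solution with that profile---is exactly the paper's strategy. The gap is in how you obtain exterior scattering for \emph{all} $A$, not just large $A$.

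You correctly observe that for large $A$ the truncated data are small and the small-data theory gives a scattering state $u_{L,A}$. But the rest of your argument (defining $G_+$ on all of $\R$, taking $A\to-\infty$, and finally verifying \eqref{radiation} for every $A$) needs $u_{L,A}$ for \emph{every} $A$. Your attempt to bridge this in the last paragraph, by controlling the annulus $\{A+t<|x|<A'+t\}$, fails: the assertion that ``both $u$ and $v_L$ have vanishing energy flux through that shrinking-proportion region'' is simply false. For a linear solution, Lemma~\ref{L:asymptotic} says precisely that the energy concentrates on such fixed-width annuli at distance $\sim t$, with
\[
\lim_{t\to\infty}\int_{A+t<|x|<A'+t}|\nabla_{t,x}v_L|^2\,dx \;=\; 8\pi\int_A^{A'}G_+(\eta)^2\,d\eta,
\]
which is generically nonzero. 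Likewise, your comparison ``for $u$ by comparison with $u_{L,A'}$'' gives no information in the annulus, since $u\approx u_{L,A'}$ only on $\{|x|>A'+t\}$.

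What the paper does instead is prove directly that $\indic_{\{|x|>A+|t|\}}u\in L^{2m+1}_t L^{2(2m+1)}_x$ for \emph{every} $A$ (this is its Step~1), which immediately yields a scattering state $v_L^A$ for every $A$, after which your patching argument goes through verbatim. The point you are missing is how to get smallness on a cone that is not far out at $t=0$: take $t_n\to\infty$ with $\vec u(t_n)$ bounded (Lemma~\ref{L:bnd}), run a profile decomposition of $\vec u(t_n)$, and observe that the cone $\{|x|>A+t,\;t\ge t_n\}$, in shifted time $s=t-t_n$, is $\{|x|>(A+t_n)+s,\;s\ge0\}$. Since $A+t_n\to\infty$, the only profile that can contribute Strichartz norm on this cone is the stationary one $U^1_L$ (with $t_{1,n}=0$), and one fixes $B$ once and for all so that $\|\indic_{\{|x|>B+|s|\}}U^1_L\|$ is small; the translated profiles $j\ge2$ and the remainder $w_n^J$ contribute $o(1)$. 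This gives smallness of the linear evolution of the \emph{bounded} (not small) data $\vec u(t_n)$ on the relevant cone, and the small-data theory then closes. Without this step, your construction of $G_+$ only produces a function on a half-line $(A_0,\infty)$, and the conclusion \eqref{radiation} is not established for $A<A_0$.
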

(see \cite[Lemma 3.7]{DuKeMe13} for the analog for radial solutions of the energy critical equation on $\R^3$).
\begin{proof}
 \emph{Step 1.} We prove:
 \begin{equation}
  \label{OO120} 
  \forall A\in \R,\quad \left\| \indic_{\{|x|>A+|t|\}}u\right\|_{L^{2m+1}\left( [0,+\infty),L^{2(2m+1)}(\Omega) \right)}<\infty.
 \end{equation} 
 Let $(t_n)_n$ be a sequence given by Lemma \ref{L:bnd} such that 
 \begin{equation}
  \label{OO121} 
  \lim_{n\to\infty}t_n=+\infty,\quad \limsup_{n\to\infty} \|\vec{u}(t_n)\|_{\dot{\SF}^1}<\infty.
 \end{equation} 
 By the small data theory outside wave cones and finite speed of propagation, it is sufficient to prove that for large $n$,
 \begin{equation}
 \label{OO22}
 \left\| \indic_{\{|x|\geq A+|t|\}} S_L(t-t_n)\vec{u}(t_n)\right\|_{L^{2m+1}([t_n,+\infty),L^{2(2m+1)})}\leq \eps_0,
 \end{equation} 
 where $\eps_0>0$ is a small constant given by the small data theory. Let $\left(U^j_L,\left(t_{j,n}\right)_n\right)$ be a profile decomposition for the sequence $\vec{u}(t_n)$. Without loss of generality, we can assume 
 \begin{equation}
\label{t1ntjn}
 \forall n, \quad t_{1,n}=0\text{ and } \forall j\geq 2,\; \lim_{n\to\infty}t_{j,n}\in \{\pm \infty\}.  
 \end{equation} 
Let $B\geq 1$ such that 
$$\left\|\indic_{\{|x|\geq B+|t|\}} U_L^1\right\|_{L^{2m+1}\left([0,\infty),L^{2(2m+1)}(\Omega)\right)}\leq \eps_0/2.$$
By dominated convergence, using \eqref{t1ntjn}, we have for $j\geq 2$
\begin{multline*} 
\left\|\indic_{\{|x|\geq B+|t|\}} U_L^j(\cdot-t_{j,n})\right\|_{L^{2m+1}\left([0,\infty),L^{2(2m+1)}(\Omega)\right)}\\
=\left\|\indic_{\{|x|\geq B+|t+t_{j,n}|\}} U_L^j\right\|_{L^{2m+1}\left([-t_{j,n},\infty),L^{2(2m+1)}(\Omega)\right)} \underset{n\to\infty}{\longrightarrow} 0.
\end{multline*} 
This implies that for large $n$
$$\left\|S_L(t)\vec{u}(t_n)\indic_{\{|x|\geq B+|t|\}}\right\|_{L^{2m+1}([0,\infty),L^{2(2m+1)}(\Omega)}\leq 2\eps_0/3,$$
which yields \eqref{OO22} by the small data theory.

\emph{Step 2. } We prove that for all $A\in \R$, there exists a solution $v_L^A$ of the linear wave equation \eqref{eq:LW} such that 
\begin{equation}
 \label{OO140}
 \lim_{t\to+\infty} \int_{|x|\geq A+|t|} \left|\nabla_{t,x}(u-v_L^A)\right|^2\,dx=0.
\end{equation}
Indeed, this follows immediately from Step 1, noticing that $u$ coincide, for $|x|\geq A+t$ ($t\geq 0$), with the solution $u^A$ of 
\begin{equation}
 \left\{
 \begin{aligned}
 \partial_t^2u^A-\Delta u^A&= (u^A)^{2m+1}\indic_{\{|x|\geq A+|t|\}},\quad (t,x)\in [0,\infty)\times \Omega\\
 \vec{u}^A_{\restriction t=0}&=(u_0,u_1),\quad u_{\restriction \partial \Omega}=0.
\end{aligned}\right.
 \end{equation} 
Since by Step 1 the right-hand side of the equation is in $L^1\left((0,\infty),L^2(\Omega)\right)$, we obtain the existence of $v_L^A$ satisfying \eqref{OO140}.

\emph{Step 3.} In this step we conclude the proof, proving that $v_L^A$ can be taken independent of $A$. We let $G^A$ be the unique element of $L^2(\R)$ such that 
\begin{align*}
\lim_{t\to+\infty} \int_0^{+\infty} \left|r\partial_rv_L^A-G^A(r-t)\right|^2\,dr&=0\\
\lim_{t\to+\infty} \int_0^{+\infty} \left|r\partial_tv_L^A+G^A(r-t)\right|^2\,dr&=0
\end{align*}
(see Lemma \ref{L:asymptotic}). By Lemma \ref{L:bnd}, there exists a constant $C_m$ such that 
\begin{equation}
\label{boundGA}
\|G^A\|_{L^2}^2\leq C_mE(u_0,u_1). 
\end{equation} 
By the construction of $v_L^A$ in Step 2, we have 
$$\lim_{t\to\infty}\int_{|x|\geq B+|t|} \left|\nabla_{t,x}(v_L^A-v_L^B)\right|^2\,dx=0$$
if $A\leq B$. This proves that $G^A(\eta)=G^B(\eta)$ if $\eta\geq B=\max(A,B)$. We define $G$ by
$$G(\eta)=G^{\eta-1}(\eta),$$
so that if $\eta\geq A$, $G(\eta)=G^A(\eta)$. We
note in particular that by \eqref{boundGA}, $G\in L^2(\R)$.
Let $v_L$ be the solution of \eqref{eq:LW}, given by Lemma \ref{L:asymptotic}, such that
\begin{align*}
\lim_{t\to+\infty} \int_0^{+\infty} \left|r\partial_rv_L-G(r-t)\right|^2\,dr&=0\\
\lim_{t\to+\infty} \int_0^{+\infty} \left|r\partial_tv_L+G(r-t)\right|^2\,dr&=0.
\end{align*}
Using \eqref{OO140} and the definition of $G$ and $v_L$, we obtain that $v_L$ satisfies the desired estimate \eqref{radiation}.
\end{proof}
\subsection{Proof of the soliton resolution}
\label{SS:resolution}
In this subsection we conclude the proof of Theorem \ref{T:classification}. We consider a solution $u$ of \eqref{eq:NLW}. We assume that $u$ is well defined for $t\geq 0$, and we let $v_L$ be its dispersive component, given by Proposition \ref{P:radiation}.
 
\medskip 
 
\noindent\emph{Step 1.} We prove that for all sequence $t_n\to+\infty$ such that $\vec{u}(t_n)$ is bounded in $\HHH(\Omega)$, there exists an subsequence of $(t_n)_n$ (still denoted by $(t_n)_n$), and a stationary solution $Q$ such that
\begin{equation}
 \label{resolution-sequence}
 \lim_{n\to\infty}\left\|\vec{u}(t_n)-\vec{v}_L(t_n)-(Q,0)\right\|_{\HHH(\Omega)}=0.
\end{equation} 
Let $t_n$ be such a sequence. According to Proposition \ref{P:profile}, we can assume (extracting subsequences if necessary), that the sequence $S_L(t)(\vec{u}(t_n)-\vec{v}_L(t_n))$ has a profile decomposition $\left\{U^j_L,(t_{j,n})_n\right\}_{j\geq 1}$. We assume as usual
$$ \forall j\geq 2,\; \lim_{n\to \infty} t_{j,n}\in \{\pm\infty\}\quad\text{and} \quad \forall n,\; t_{1,n}=0.$$

We note  that the solution sequence $\vec{S}_L(-t_n)(\vec{u}(t_n))$ converges weakly to $\vec{v}_L(0)$. 
Denoting by $U^0_L=v_L$, $t^0_n=-t_n$, we see that  $\left\{U^j_L,(t_{j,n})_n\right\}_{j\geq 0}$ is a profile decomposition for $S_L(t)(\vec{u}(t_n))$. In particular,
$$\forall j\geq 2,\quad \lim_{n\to\infty} |t_n-t_n^j|=+\infty.$$
We prove by contradiction 
\begin{equation}
\label{U0}
\forall j\geq 2,\quad U^j \not \equiv 0\Longrightarrow \lim_{n\to\infty} t_n-t_n^j=+\infty.
\end{equation} 
Assume on the contrary that there exists $j\geq 2$ such that
\begin{equation}
\label{V0}
U^j \not \equiv 0\text{ and } \lim_{n\to\infty} t_n^j-t_n=+\infty.
\end{equation} 
Recall
\begin{equation}
 \label{V1}
 \vec{S}_L\left( t_n^j \right)\vec{u}(t_n)\xrightharpoonup[n\to \infty]{}\vec{U}_L^j(0),\text{ weakly in }\HHH.
\end{equation} 
Let $a_n(t)=S_L(t)\vec{u}(t_n)$. By the strong Huygens principle (see the first line of \eqref{exp_u_lin_2})
\begin{equation}
 \label{V2}
 \int_{1\leq |x|\leq M} |\nabla_{t,x}a_n(t_n^j,x)|^2\,dx\leq \int_{t_n^j-M\leq |x|\leq t_n^j+M}|\nabla_{t,x}u(t_n,x)|^2\,dx.
\end{equation} 
Since by \eqref{U0},
$$\lim_{n\to\infty} \int_{|x|\geq t_n^j-M-t_n}|\nabla_{t,x}u(0,x)|^2\,dx=0,$$
we obtain (by finite speed of propagation again)
$$\lim_{n\to\infty} \int_{|x|\geq t_n^j-M}|\nabla_{t,x}u(t_n,x)|^2\,dx=0,$$
and thus \eqref{V2} implies
$$\lim_{n\to\infty}\int_{1\leq |x|\leq M}|\nabla_{t,x}a_n(t_n^j,x)|^2\,dx=0.$$
By \eqref{V1}, $U_L$ is identically $0$ , contradicting \eqref{V0}.

As usual, we denote by $U^1$ the solution of \eqref{eq:NLW} with initial data $U^1(0)$. 
We next prove that $U^1$ is a stationary solution. If not, by Proposition \ref{P:rigidity}, there exists $R\geq 1$ such that $U^1$ is well-defined for $\{|x|>R+|t|\}$, and 
\begin{equation}
\label{U1}
\sum_{\pm\infty} \lim_{t\to\pm \infty} \int_{\{|x|>R+|t|\}} \left|\nabla_{t,x}U^1(t,x)\right|^2\,dx>0.
\end{equation}
We let 
$$w_n^J(t)=u_L(t+t_n)-v_L(t+t_n)-\sum_{j=1}^J U^j_L(t-t_{j,n}),$$
and
\begin{equation}
\label{epsilon_n_J}
\epsilon_n^J(t)=u(t+t_n)-v_L(t+t_n)-U^1(t)-\sum_{j=2}^J U^j_L(t-t_{j,n})-w_n^J(t). 
\end{equation} 
By Proposition \ref{P:approx_profile}, $u(t_n+t)$ is well defined for $\{|x|>R+|t|\}$, and 
$$\lim_{J\to\infty}\limsup_{n\to\infty}\left(\sup_{t\in \R} \left\|\indic_{\{|x|>R+|t|\}} \nabla_{t,x}\epsilon_n^J(t)\right\|_{L^2}\right)=0.$$
We first consider the case where
\begin{equation}
\label{U1'}
\lim_{t\to+ \infty} \int_{\{|x|>R+|t|\}} \left|\nabla_{t,x}U^1(t,x)\right|^2\,dx=\eta_+>0.
\end{equation}
By \eqref{epsilon_n_J}, for all $t\geq 0$,
\begin{multline}
\label{scal_product}
\int_{|x|>R+|t|} \left(\nabla_{t,x}(u-v_L)(t+t_n)\right)\cdot\nabla_{t,x} U^1(t)\,dx
=
\int_{|x|>R+|t|} |\nabla_{t,x} U^1(t)|^2\,dx\\
+\sum_{j=2}^J \int_{|x|>R+|t|} \nabla_{t,x}U^j_L(t-t_{j,n})\cdot\nabla_{t,x}U^1(t)\,dx\\
+ \int_{|x|>R+|t|} \nabla_{t,x}w_n^J(t)\cdot \nabla_{t,x} U^1(t)\,dx+o_n(1),
\end{multline}
where $o_n(1)$ goes to $0$ as $n$ goes to infinity, uniformly with respect to $t\geq 0$.
Using that $\lim_{n\to\infty} |t_n^j-t_n^k|=+\infty$ for $j\neq k$ and the property \eqref{dispersive_part} of $w_n^J$, it is easy to prove that lines $2$ and $3$ of \eqref{scal_product} go to $0$ as $n\to\infty$
(see e.g. Claim 3.2 in \cite{DuKeMe13}), and thus, by \eqref{U1'}, for large $n$, 
$$\lim_{t\to\infty}
\int_{|x|>R+|t|} \left|\nabla_{t,x}(u-v_L)(t+t_n,x)\right|^2\,dx\geq \eta_+/2.$$
In other words, for large $n$,
$$\lim_{t\to\infty}
\int_{|x|>R+t-t_n} \left|\nabla_{t,x}(u-v_L)(t,x)\right|^2\,dx\geq \eta_+/2,$$
which contradicts the definition of $v_L$ given by Proposition \ref{P:radiation}.

We next assume 
\begin{equation}
\label{U1''}
\lim_{t\to- \infty} \int_{\{|x|>R+|t|\}} \left|\nabla_{t,x}U^1(t,x)\right|^2\,dx=\eta_->0.
\end{equation}
Arguing as before, we obtain that for large $n$, using the analog of \eqref{scal_product} with $t=-t_n$
$$
\int_{|x|>R+t_n} \left|\nabla_{t,x}u(0,x)\right|^2\,dx\geq \eta_-/2.$$
Since $t_n$ is arbitrarily large, we obtain a contradiction, proving that $U^1$ is a stationary solution $Q$. Note that the case $Q\equiv 0$ is not excluded. In any case, we have, by explicit computation:
$$\indic_{\{|x|>|t|\}}Q\in {L^{2m+1}\left(\R,L^{2(2m+1)}\right)}, $$
so that the assumptions of Proposition \ref{P:approx_profile} (and its analog in the past) are satisfied with $R=1$. As a consequence, letting
$$ \epsilon_n^J(t,x)=u(t+t_n,x)-v_L(t+t_n,x)-Q(x)-\sum_{j=2}^J U^j_{L}(t-t_{j,n},x)-w_n^J(t,x),$$
we have 
\begin{equation}
\label{remainder_nul}
\lim_{J\to\infty} \limsup_{n\to\infty} \sup_{t\in \R} \int_{|x|>|t|+1}\left|\nabla_{t,x}\epsilon_n^J(t,x)\right|^2\,dx=0. 
\end{equation} 

We next prove by contradiction that $U^j_L \equiv 0$ for $j\geq 2$. Assume that there exists $j\geq 2$ such that $U^j_L$ is not zero. Then by Lemma \ref{L:asymptotic}, we have, for large $A$,
\begin{equation}
 \label{asymptotic_Uj}
 \lim_{t\to\pm\infty} \int_{|t|-A<|x|<|t|+A} \left|\nabla U^j_L(t,x)\right|^2\,dx=\eta_{\pm}>0.
\end{equation} 
First assume
$$\lim_{n\to\infty} t_{j,n}=-\infty.$$
Combining \eqref{remainder_nul}, \eqref{asymptotic_Uj} and the pseudo-orthogonality of the time sequences $(t_{j,n})_n$, we can obtain that for a large fixed $n$, 
$$\lim_{t\to+\infty} \int_{t-t_n-t_{j,n}-A\leq |x|\leq t-t_n-t_{j,n}+A}|\nabla_{t,x}(u-v_L)|^2\,dx \geq \eta_+/2.$$
This contradicts the definition of $v_L$ in Proposition \ref{P:radiation}
Next assume
$$\lim_{n\to\infty} t_{j,n}=+\infty.$$
Using that by \eqref{remainder_nul},
$$   \lim_{J\to+\infty} \limsup_{n\to\infty} \sup_{t\in \R} \int_{|x|>|t_{n}|+1}\left|\nabla_{t,x}\epsilon_n^J(t-t_{n},x)\right|^2\,dx=0,$$
we obtain that for all large $n$,
\begin{equation}
 \label{asymptotic_Uj_bis}
 \int_{t_n+t_{j,n}-A<|x|<t_n+t_{j,n}+A} \left|\nabla_{t,x}u(0)\right|^2\,dx\geq \frac{\eta_{-}}{2},
\end{equation} 
a contradiction, since $\vec{u}(0)\in \HHH(\Omega)$. 

Since $U^j_L\equiv 0$ for $j\geq 2$, we see that $w_n^J$ and $\epsilon_n^J$ do not depend on $J\geq 2$. We will denote $w_n=w_n^J$ and $\eps_n=\eps_n^J$. We are left with proving 
$$\lim_{n\to\infty} \|\vec{w}_n(0)\|_{\HHH(\Omega)}=0.$$
Since by Lemma \ref{L:exterior_energy}, 
\begin{equation}
\label{wn}
\sum_{\pm}\lim_{t\to\pm\infty} \int_{|x|\geq |t|+1}|\nabla_{t,x}w_n|^2\,dx\geq \frac{1}{2}\|\vec{w}_n(0)\|_{\HHH(\Omega)}^2, 
\end{equation} 
we can deduce, with the same arguments as before,
$$\lim_{t\to+\infty} \int_{|x|\geq t+t_n}|\nabla_{t,x}(u-v_L)|^2\,dx \geq \frac{1}{2}\left\|\vec{w}_n(0)\right\|_{\HHH}^2,$$
if \eqref{wn} holds for large $n$ with a sign $+$, and 
\begin{equation*}
 \int_{|x|>t_n} \left|\nabla_{t,x}u(0)\right|^2\,dx\geq \frac{1}{2}\left\|\vec{w}_n(0)\right\|_{\HHH}^2,
\end{equation*} 
if \eqref{wn} holds for large $n$ with a sign $-$. This yields, in both cases, a contradiction, concluding this step.

\medskip

\noindent\emph{Step 2. Conclusion of the proof.}
Let $t_n\to+\infty$ be as in the preceding step. In view of \eqref{resolution-sequence}, we must prove
\begin{equation}
 \label{full-resolution}
\lim_{t\to\infty}\left\|\vec{u}(t)-\vec{v}_L(t)-(Q,0)\right\|_{\HHH(\Omega)}=0.
\end{equation} 
We assume that \eqref{full-resolution} does not hold, and fix a small $\eps>0$, such that 
$$ \limsup_{t\to\infty}\left\|\vec{u}(t)-\vec{v}_L(t)-(Q,0)\right\|_{\HHH(\Omega)}>\eps.$$
Let 
$$t_n'=\min\Big\{t>t_n\text{ s.t. } \left\|\vec{u}(t)-\vec{v}_L(t)-(Q,0)\right\|_{\HHH(\Omega)}>\eps \Big\},$$
so that $t_n<t_n'$ and 
\begin{equation}
 \label{dist_eps}
\left\|\vec{u}(t'_n)-\vec{v}_L(t'_n)-(Q,0)\right\|_{\HHH(\Omega)}=\eps. 
\end{equation} 
By Step 1, there exists a stationary solution $Q'$ such that 
\begin{equation}
 \label{res_seq'}
 \lim_{n\to\infty}
\left\|\vec{u}(t'_n)-\vec{v}_L(t'_n)-(Q',0)\right\|_{\HHH(\Omega)}=0. 
\end{equation}
By the triangle inequality, \eqref{dist_eps} and \eqref{res_seq'},
\begin{equation}
 \label{Q_Q'}
 \left\|Q-Q'\right\|_{\HHH(\Omega)}\leq \eps.
\end{equation} 
By \eqref{resolution-sequence}, and the conservation of the linear and the nonlinear energy:
$$E(Q,0)+\frac 12 \|\vec{v}(0)\|_{\HHH(\Omega)}^2=E(u_0,u_1).$$
Similarly, by \eqref{res_seq'},
$$E(Q',0)+\frac 12 \|\vec{v}(0)\|_{\HHH(\Omega)}^2=E(u_0,u_1).$$
This proves that 
$$E(Q,0)=E(Q',0).$$
By the classification of the radial stationary solutions in Subsection \ref{SS:stationary}, we obtain that $Q=Q'$, or $Q\neq 0$ and $Q=-Q'$. 
The first case contradicts \eqref{dist_eps} or \eqref{res_seq'}. In the second case $\|Q-Q'\|_{\HHH}=2\|Q\|_{\HHH}\geq 2\|Q_0\|_{\HHH}$, where $Q_0$ is the ground state (see  Subsection \ref{SS:stationary}). This contradicts  \eqref{Q_Q'} if $\eps$ is small enough. The proof is complete.
\begin{remark}
 \label{R:proof_defocusing}
 Proposition \ref{P:radiation} (exitence of a radiation term $v_L$) is still valid with the same proof, for the defocusing analog of \eqref{eq:NLW}.
 If $u$ is a solution of the defocusing analog of \eqref{eq:NLW}, then Remark \ref{R:rigidity_defocusing},  and Step 1 of the preceding proof yield
 the existence of a sequence $t_n\to+\infty$ such that 
 $$\lim_{n\to\infty}\|\vec{u}(t_n)-\vec{v}_L(t_n)\|_{\HHH(\Omega)}= 0.$$
 This implies, by the small data well-posedness theory that 
 $$\|u\|_{L^{2m+1}\left(L^{2(2m+1)}(\Omega)\right) }<\infty$$
 for large $n$, and thus that $u$ scatters.
 \end{remark}

\section{Further elements on the dynamics}
\label{S:further}
\subsection{Dynamics below the energy threshold}
\label{SS:threshold}
In this section we prove Corollary \ref{cor:ground_state}. 

Let $(u_0,u_1)\in \HHH$ with $E(u_0,u_1)\leq E(Q_0,0)$, and denote by $(T_-,T_+)$ its maximal interval of existence.

We start by variational considerations.
Using the  Sobolev inequality of Proposition \ref{prop:minimizer}, the fact that $\int_{\Omega} |\nabla Q_0|^2=\int_{\Omega} Q_0^{2m+2}$, and the conservation of the energy we obtain
\begin{equation}
 \label{FromSobolev}
 E(Q_0,0)\geq E(u_0,u_1)\geq f\left( |\nabla u(t)|^2\right)+\frac{1}{2}\left\|\partial_tu(t)\right\|_{L^2}^2,
\end{equation} 
where $f(\sigma)=\frac{\sigma}{2}-\frac{1}{2m+2} \frac{\sigma^{m+1}}{\left(\int_{\Omega}|\nabla Q_0|^2\right)^m} $. The function $f$ is increasing on $\left(0,\int_{\Omega} |\nabla Q_0|^2\right)$, decreasing on $\left( \int_{\Omega}|\nabla Q_0|^2,+\infty \right)$ and satisfies $f\left(\int_{\Omega}|\nabla Q_0|^2\right)=E(Q_0,0)$. In parti\-cular, $E(Q_0,0)$ is the maximum of $f$ and it is attained at $\sigma=\int |\nabla Q_0|^2$. We deduce from \eqref{FromSobolev} that for all $t\in (T_-,T_+)$
$$ \int_{\Omega} |\nabla u(t)|^2=\int_{\Omega}|\nabla Q_0|^2\Longrightarrow \int_{\Omega}(\partial_tu(t))^2=0\text{ and } E(\vec{u}(t))=E(Q_0,0).$$
Thus if $\int_{\Omega}|\nabla u(t)|^2=\int_{\Omega}|\nabla Q_0|^2$, for one $t\in (T_-,T_+)$, we must have  $\int_{\Omega}|u(t)|^{2m+2}=\int_{\Omega}|Q_0|^{2m+2}$, and the uniqueness in Proposition \ref{prop:minimizer} shows that $\vec{u}(t)=\pm (Q_0,0)$, and thus that $u$ is a stationary solution. By the intermediate value theorem,
\begin{gather}
\label{sub}
\int_{\Omega}|\nabla u_0|^2<\int_{\Omega}|\nabla Q_0|^2\Longrightarrow \forall t\in (T_-,T_+),\; \int_{\Omega}|\nabla u(t)|^2<\int_{\Omega}|\nabla Q_0|^2\\
\label{super}
\int_{\Omega}|\nabla u_0|^2>\int_{\Omega}|\nabla Q_0|^2\Longrightarrow \forall t\in (T_-,T_+),\; \int_{\Omega}|\nabla u(t)|^2>\int_{\Omega}|\nabla Q_0|^2.
\end{gather} 

\medskip

\noindent\emph{Case 1: global existence.}
Assume that we are in the case where the left-hand side of \eqref{sub} is satisfied. We see that $u$ is bounded in $\dot{H}^{1}(\Omega)$, and thus, by conservation of the energy, that $\vec{u}$ is bounded in $\HHH$. Thus $u$ is global. %By \eqref{FromSobolev} and the study of the function $f$, if $E(u_0,u_1)<E(Q_0,0)$, there exists $\eps>0$ such that 
%$$\forall t\in \R,\quad \int_{\Omega}|\nabla u(t)|^2<\int_{\Omega}|\nabla Q_0|^2-\eps.$$
%By Theorem \ref{T:classification}, and since $\int |\nabla Q_k|^2>\int |\nabla Q_0|^2$ for all $k\geq 1$, we deduce that in this case $u$ must scatter in both time directions.

Furthermore, Theorem \ref{T:classification} and the condition $E(u_0,u_1)\leq E(Q_0,0)$ implies that if $u$ does not scatter forward (respectively backward) in time to a linear solution, then
\begin{equation}
\label{goingtoQ}
\lim_{t\to +\infty} \|\vec{u}(t)-(Q_0,0)\|_{\HHH}=0 
\end{equation} 
(respectively $\lim_{t\to-\infty}\ldots$). However we see by Proposition \ref{P:rigidity} that both properties cannot occur simultaneously, i.e. that $u$ must scatter in at least one time direction.

\medskip

\noindent\emph{Case 2: finite time blow-up.} Next, we assume that we are in the case where the left-hand side of \eqref{super} is satisfied.
Note that if $u$ is global and scatters to a linear solution, say forward in time, then we must have
$$\lim_{t\to+\infty}\frac{1}{2}\int_{\Omega}|\nabla_{x,t} u(t)|^2=E(u_0,u_1)\leq E(Q_0,0)<\frac{1}{2}\int |\nabla Q_0|^2.$$
Thus \eqref{super} implies that $u$ cannot scatter to a linear solution in any time direction. As a consequence, if $T_+=+\infty$, then by Theorem \ref{T:classification} \eqref{goingtoQ} must be satisfied and similarly for negative times. Again, Proposition \ref{P:rigidity} implies that both properties cannot occur simultaneously, which concludes the proof. \qed

\subsection{One-pass theorem}
\label{SS:one-pass}
In this subsection we prove Theorem \ref{T:one-pass}. Denote by $\Sigma=\{0\}\cup \bigcup_k\{Q_k\}\cup \bigcup_k\{-Q_k\}$ the set of stationary solutions. We argue by contradiction, assuming there there exist $\eps>0$, and, for all $n\geq 1$, $s_{n}<t_{n}'<t_{n}$, a solution $u_n$ of \eqref{eq:NLW} defined on $[s_{n},t_{n}]$ and such that 
\begin{gather}
 \label{OP01}
\lim_{n\to\infty}\left(\left\|\vec{u}_n(s_{n})-(Q_k,0)\right\|_{\HHH}+\min_{Q\in \Sigma}\left\|\vec{u}_n(t_{n})-(Q,0)\right\|_{\HHH}\right)=0\\
\label{OP02}
\forall n,\quad \left\|\vec{u}_n(t'_n)-(Q_k,0)\right\|_{\HHH}\geq \eps.
\end{gather} 
By the intermediate value theorem, we can replace the inequality in \eqref{OP02} by an equality. Translating in time, we can assume $t_n'=0$. Furthermore, by energy conservation, we can replace the minimum in \eqref{OP01} by $\left\|\vec{u}_n(t_{n})-\iota (Q_k,0)\right\|_{\HHH}$ for some sign $\iota\in \{\pm 1\}$. Thus we can replace \eqref{OP01} and \eqref{OP02} by
\begin{gather}
 \label{OP01'}
\lim_{n\to\infty}\Bigl(\left\|\vec{u}_n(s_{n})-(Q_k,0)\right\|_{\HHH}+\left\|\vec{u}_n(t_{n})-\iota(Q_k,0)\right\|_{\HHH}\Bigr)=0\\
\label{OP02'}
\forall n,\quad \left\|\vec{u}_n(0)-(Q_k,0)\right\|_{\HHH}= \eps,
\end{gather} 
where $s_n<0<t_n$. Extracting subsequences if necessary, we consider a profile decomposition $\left\{U^j_L,(t_{j,n})_n\right\}_{j\geq 1}$ of $\vec{u}_n(0)$. As in Subsection \ref{SS:profile}, we assume
$$ \forall n,\quad t_{1,n}=0,\quad j\geq 2\Longrightarrow \lim_{n\to\infty}t_{j,n}\in \{\pm\infty\}.$$
By \eqref{OP02'} and the Pythagorean expansion of the $\HHH$ norm, we have
\begin{equation}
 \label{OP11}
\left\|\vec{U}^1_L(0)-(Q_k,0)\right\|_{\HHH}\leq \eps.
\end{equation} 
We distinguish two cases.

If $\vec{U}^1_L(0)=(Q_k,0)$, then \eqref{OP02'} and the Pythagorean expansion of the energy show that 
$$\lim_{n\to\infty} E(\vec{u}_n(0))>E(Q_k,0),$$
a contradiction with \eqref{OP01'}. 

If $\vec{U}^1_L(0)\neq (Q_k,0)$, then by \eqref{OP11} and the classification of stationary solutions (Proposition \ref{P:stationary}), since $\eps$ is small, we see that $\vec{U}^1_L(0)$ is not a stationary solution. By \eqref{OP11}, we also now (using again that $\eps$ is small) that the solution $U^1$ of \eqref{eq:NLW} with initial data $\vec{U}^1_L(0)$ is well-defined on $\{r>|t|+1\}$.  As a consequence, by Proposition \ref{P:rigidity}, $U^1$ satisfies:
$$ \sum_{\pm} \lim_{t\to \pm\infty} \int_{|x|>|t|+1}|\nabla U^1(t,x)|^2\,dx>0$$
By the small data well-posedness theory, this implies 
\begin{equation}
\label{infimum}
\inf_{t\geq 0}\int_{|x|\geq |t|+1} |\nabla U^1(t,x)|^2\,dx+\inf_{t\leq 0}\int_{|x|\geq |t|+1} |\nabla U^1(t,x)|^2\,dx>0.
\end{equation} 
Thus there is a small $\eta>0$ such that the following holds for all large $n$:
\begin{equation}
\label{lowerbnd}
\int_{|x|>|\sigma_n|+1}|\nabla u_n(\sigma_n,x)|^2\,dx>\eta,
\end{equation}  
where $\sigma_n=s_n$ if the infimum  for $t\leq 0$ in \eqref{infimum} is positive, and $\sigma_n=t_n$
if the infimum for $t\geq 0$ is positive.

Arguing as in Subsection \ref{SS:resolution}, we deduce that the following holds for all $n$:
$$ \int_{|x|>|\sigma_n|+1}|\nabla u_n(\sigma_n,x)|^2\,dx>\frac{\eta}{2}.$$
Combining with \eqref{OP01'} we deduce that for large $n$
$$ \int_{|x|>R+|\sigma_n|}|\nabla Q_k(x)|^2\,dx>\frac{\eta}{4}.$$
This is a contradiction since by \eqref{OP01'} and \eqref{OP02'} and the continuity of the flow for equation \eqref{eq:NLW}, we must have $\lim_{n\to\infty} |\sigma_n|=+\infty$.

\bibliographystyle{acm}%Other styles apalike, plain
%\bibliography{/home/duyckaerts/ownCloud/Recherche/toto}
\bibliography{toto}
\end{document}